\renewcommand{\cite}{\citeA}
\begin{document}
	\title{Rank-Constrained Least-Squares: Prediction and Inference}
	
    \author[1]{Michael Law}
    \author[1]{Ya\hspace{-.1em}'\hspace{-.1em}acov Ritov}
    \author[2]{Ruixiang Zhang}
    \author[1]{Ziwei Zhu}

    \affil[1]{University of Michigan, Ann Arbor}
    \affil[2]{University of California, Berkeley}
	
	\renewcommand\footnotemark{}
	
	\maketitle
	
	\begin{abstract}
	    In this work, we focus on the high-dimensional trace regression model with a low-rank coefficient matrix. We establish a nearly optimal in-sample prediction risk bound for the rank-constrained least-squares estimator under \emph{no assumptions} on the design matrix. Lying at the heart of the proof is a covering number bound for the family of projection operators corresponding to the subspaces spanned by the design.  By leveraging this complexity result, we perform a power analysis for a permutation test on the existence of a low-rank signal under the high-dimensional trace regression model. We show that the permutation test based on the rank-constrained least-squares estimator achieves non-trivial power with no assumptions on the minimum (restricted) eigenvalue of the covariance matrix of the design. Finally, we use alternating minimization to approximately solve the rank-constrained least-squares problem to evaluate its empirical in-sample prediction risk and power of the resulting permutation test in our numerical study. 
	\end{abstract}
	
		\section{Introduction}\label{sectionintroduction}
	
	In this work, we focus on the trace regression model: 
	\begin{align}\label{modeltrm}
		y = \langle \Xmat, \Thetastar \ranglehs + \epsilon. 
	\end{align}
	Here $y$ is a real-valued response, $\bX$ is a feature matrix valued in $\RR ^ {d_1 \times d_2}$, $\bTheta ^ * \in \RR ^ {d_1 \times d_2}$ is the parameter of our interest, and $\epsilon$ is a noise term that is independent of $\bX$. Throughout, objects with a superscript * denote true model paramters and we define $\langle\cdot, \cdot\rangle_{\mathrm{HS}}$ as the trace inner product in the sense that given any $\bA, \bB \in \RR ^ {d_1 \times d_2}$, $\langle\bA, \bB\rangle_{\mathrm{HS}} \defined \tr(\bA ^ \top \bB)$. 
	We write $d \defined \dtwo$ and assume without the loss of generality that $\done \leq \dtwo$ by possibly transposing the data.
	
	Suppose we have $n$ independent observations $(\bX_i, y_i)_{i \in [n]}$ generated from model \eqref{modeltrm}. Under a high-dimensional setup, $n$ is much smaller than $d_1d_2$, and some structural assumptions on $\bTheta ^ *$ are necessary to reduce the degrees of freedom of $\bTheta ^ *$ to achieve estimation consistency. Here, we assume that $\Thetastar$ is low-rank; that is, $\rstar \defined \rank(\Thetastar)$ with $\rstar d \ll n$. Given that one needs at most $(2r ^ * + 1)d$ parameters to determine $\bTheta ^ *$ through a singular value decomposition, intuitively a sample of size $n$ should suffice to achieve estimation consistency. The high-dimensional low-rank trace regression model was first introduced by \cite{rohde2011} and admits many special cases of wide interest. For instance, when $\Thetastar$ and $\bX$ are diagonal, model \eqref{modeltrm} reduces to a sparse linear regression model: 
	\begin{align}\label{modellm}
		y = \langle \xvec, \betastar \rangleeuclid + \epsilon, 
	\end{align}	
	where $\betastar = \mathrm{diag}(\Thetastar)$. Note that $\betastar$ is sparse because $\|\betastar\|_0 = r ^ * \ll d$. When $\bX$ is a singleton in the sense that $\bX = \be_i\be_j ^ \top$, where $\be_i$ and $\be_j$ are the $i$th and $j$th canonical basis vectors respectively, model \eqref{modeltrm} reduces to a low-rank matrix completion problem (\cite{candes2009exact}, \cite{koltchinskii2011}, \cite{recht2011simpler}, \cite{NWa12}).
	
	Perhaps the most natural approach to incorporate the low-rank structure in estimating $\Thetastar$ is to enforce a rank-constraint directly. Consider the following rank-constrained least-squares estimator: 
	\begin{align}
	    \label{eq:rank_ols}
		\Thetahatlzero(r) = \argmin_{\Thetamat \in \R^{\done \times \dtwo}, \rank(\Thetamat) \leq r} \sum_{i=1}^{n} \left( y_{i} - \langle \Xmat_{i}, \Thetamat \ranglehs \right)^{2}.
	\end{align}
	Note that the rank constraint is non-convex, thereby imposing a fundamental challenge computationally in obtaining this estimator. To resolve this issue, one can resort to nuclear-norm regularization to encourage low-rank structure of the estimator. Specifically, for some $\lambda > 0$, consider 
	\begin{align}
	    \label{eq:nuclear_ols}
		\Thetahatnn(\lambda) = \argmin_{\Thetamat \in \R^{\done \times \dtwo}} \Big\{ \frac{1}{n} \sum_{i=1}^{n} \left( y_{i} - \langle \Xmat_{i}, \Thetamat \ranglehs \right)^{2} + \lambda \Vert \Thetamat \Vertnuclear \Big\},
	\end{align}
	where $\Vert \cdot \Vertnuclear$ denotes the nuclear norm. Problem \eqref{eq:nuclear_ols} is convex and thus amenable to polynomial-time algorithms. The past decade or so has witnessed a flurry of works on statistical guarantees for $\Thetahatnn$; a partial list includes \cite{negahban2011estimation}, \cite{rohde2011}, \cite{candes2011tight}, and \cite{fan2021shrinkage}, among others. For instance, with a restricted strong convexity assumption on the loss function, \cite{negahban2011estimation} showed that with an appropriate choice of $\lambda$, $\fnorm{\Thetahatnn(\lambda) - \Thetastar}$ is of order $\sqrt{rd / (\kappa n)}$ up to a logarithmic factor, where $\kappa$ is a lower bound of the minimum restricted eigenvalue (\cite{bickel2009}) of the Hessian matrix of the loss function. 
	
	To the best of our knowledge, it remains open whether $\kappa$ is inevitable for statistical guarantees on learning $\Thetastar$. At this point, it is instructive to recall related results for sparse high-dimensional linear regression. \cite{zhang2014lower} showed that under a standard conjecture in computational complexity, the in-sample mean-squared prediction error of any estimator, $\hat \beta_{\mathrm{poly}}$, that can be computed within polynomial time has the following worst-case lower bound: 
    \begin{align}\label{eq:poly_lower}
        \EE \bigg\{ \frac{1}{n} \sum_{i = 1}^n \langle \xvec_i, \hat \beta_{\mathrm{poly}} - \beta ^ * \rangleeuclid ^{2}\bigg\} \gtrsim \frac{(r ^ *) ^ {1 - \delta}\log d}{n\kappa},
    \end{align} 
	where $\delta$ is an arbitrarily small positive scalar. This result demonstrates the indispensable dependence on $\kappa$ for any polynomial-time estimator of $\betastar$, which includes convex estimators like lasso. On the other hand, \cite{bunea2007aggregation} and \cite{raskutti2011minimax} showed that the $L_0$-constrained estimator $\hat \beta_{L_0}$ (also known as the best subset selection estimator), which is defined as 
    \begin{align}  
	    \label{eq:l0_ols}
		\hat \beta_{L_0}(r) \defined \argmin_{\beta \in \R^{\done}, \|\beta\|_0 \leq r} \sum_{i=1}^{n} \left( y_{i} - \langle \xvec_{i}, \beta \rangle_2 \right)^{2}, 
	\end{align}
	satisfies the following $\kappa$-free prediction error bound: 
	\begin{align}
	    \label{eq:l0_upper}
        \EE\bigg\{  \frac{1}{n} \sum_{i = 1} ^ n \langle \xvec_i, \hat \beta_{L_0}(r ^ *) - \betastar \rangleeuclid ^{2}\bigg\} \lesssim \frac{r^{\ast}\log d}{n}. 
    \end{align} 
    This demonstrates the robustness of $\hat\beta_{L_0}$ against collinearity in the design. However, under the general trace regression model, there are currently no $\kappa$-free statistical guarantees for the rank-constrained estimator $\Thetahatlzero$. 

	The first contribution of our work is an in-sample prediction error bound for the rank-constrained least-squares estimator $\Thetahatlzero$ without a restricted strong convexity requirement. We emphasize that this result is much more challenging to achieve than the counterpart result \eqref{eq:l0_upper} for $\hat\beta_{L_0}$ and requires a completely different technical treatment. We shall see in the sequel that the in-sample prediction error of both $\Thetahatlzero$ and $\hat\beta_{L_0}$ boils down to a supremum process of projections of the noise vector $(\epsilon_1, \ldots, \epsilon_n) ^ \top$ onto a family of low-dimensional subspaces. For $\hat\beta_{L_0}$, the family of subspaces is finite; for $\Thetahatlzero$, however, the family of subspaces is a continuous subset of a Stiefel manifold, which is infinite. The main technical challenge we face here is to characterize the complexity of this infinite subspace family. In Theorem \ref{theoremcoveringnumber}, we leverage a real algebraic geometry tool due to \cite{basu2007} to bound the Frobenius-norm-based covering number of this family of subspaces. 
	
	We then investigate a permutation test for the presence of sparse and low-rank signals respectively as applications of the previous results. In the context of hypotheses testing for high-dimensional sparse linear models, \cite{cai2020semisupervised} and \cite{javanmard2020flexible} both consider a debiasing-based test that controls the probability of type I error uniformly over the null parameter space of sparse vectors. There, the sparsity $s ^ *$ of the regression coefficients needs to satisfy $s ^ * = o\{n^{1/2} / \log(d)\}$ for the asymptotic variance of the test statistic to dominate the bias.  By considering a permutation test, we circumvent the challenge of characterizing the asymptotic distribution of a test statistic and accommodate denser alternative parameters.  Moreover, under a mild assumption on the design, we are able to leverage the super-efficiency of the origin, which was instead seen as a challenge in high-dimensional group inference \cite{guo2021group}, to test at a faster rate than $n^{-1/2}$. To the best of our knowledge, this is the first proposal to conduct inference for the presence of low-rank signals.

	\subsection{Organization of the Paper}
	
	In Section \ref{sectionrisk}, we consider a discretization scheme of all possible models in low-rank trace regression and derive the covering number of the corresponding Stiefel sub-manifold that is used to analyze the performance of $\Thetahatlzero$ for in-sample prediction.
	Next, in Section \ref{sectiontesting}, we consider global hypotheses testing in signal-plus-noise models.  We start with a general power analysis for signal-plus-noise models in Section \ref{sectiontestinggeneral}, which we then apply to the sparse high-dimensional linear model and low-rank trace regression model in Sections \ref{sectiontestinglm} and \ref{sectiontestingtrm} respectively.  By leveraging the projection structure of the rank-constrained estimator, in Section \ref{sectiontestingtrmmis}, we demonstrate the robustness of our power analysis to misspecification of the rank. Finally, we analyze the empirical performance of our proposed methodologies in Section \ref{sectionsimulations}.  For the ease of presentation, most of the proofs for Section \ref{sectionrisk} and all of the proofs for Section \ref{sectiontesting} are deferred to Section \ref{sectionproofs}.  The supplement contains additional simulation results for matrix completion.
	
    \section{In-Sample Prediction Risk of the Rank-Constrained Estimator}
	\label{sectionrisk}
	\begin{figure}[t]
	    \centering
	    \includegraphics[scale=.4]{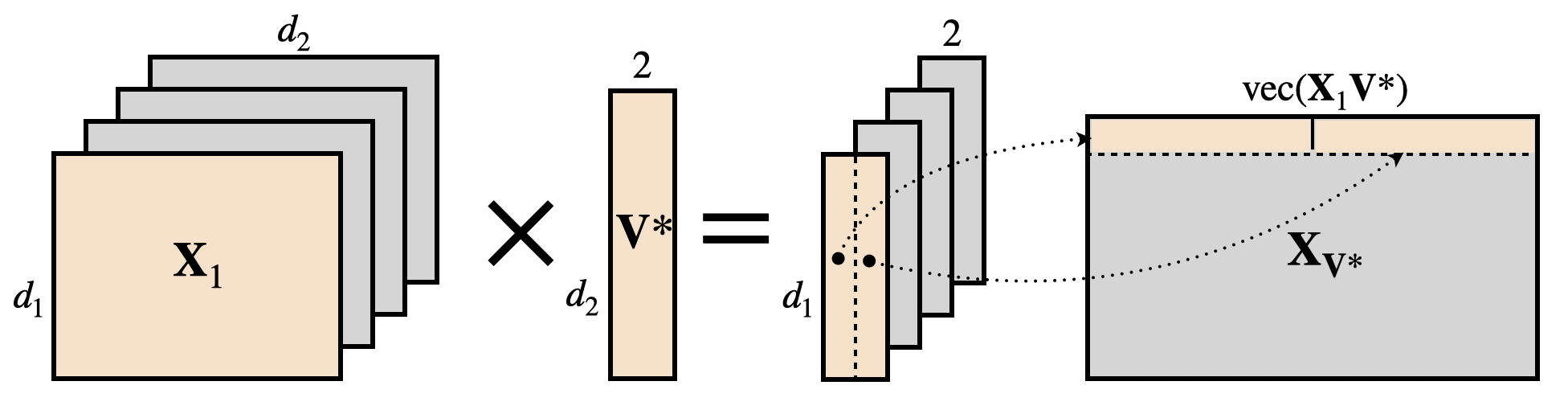}
	    \caption{Illustration of the construction of $\bX_{\bV ^ *}$ with oracle $\bV ^ *$ when $r ^ * = 2$.}
	    \label{fig:reduced_model}
	\end{figure}
	
    Given an estimator $\hat \bTheta$ of $\bTheta ^ *$, define its in-sample prediction risk as 
    \begin{equation}
	    \label{eq:in_sample_pred}
	    \mathcal{R}(\hat{\Thetamat}) \defined \frac{1}{n} \sum_{i=1}^{n} \langle \Xmat_{i}, \hat\bTheta - \Thetastar \ranglehs^{2}.
	\end{equation}
    This section focuses on characterizing the in-sample prediction risk of the rank-constrained estimator $\hat\bTheta_{L_0}$. For any $\mathbf{\Theta} ^ *$ with rank $r^{\ast}$, there exist two matrices $\Umat ^ * \in \R^{\done \times r ^ *}$ and $\Vmat ^ *\in \R^{\dtwo \times r ^ *}$ such that $\mathbf{\Theta} ^ *= \Umat ^ * {\Vmat^ *} ^ \T$.  The existence of $\Umat ^ *$ and $\Vmat ^ *$ is guaranteed, for example, by a singular value decomposition of $\mathbf{\Theta} ^ *$.  Note that this representation is not unique, since for any invertible matrix $\Amat \in \R^{r \times r}$, we have $\mathbf{\Theta} ^ * = \Umat ^ * \Amat \Amat^{-1} {\Vmat ^ *} ^ \top$. Now the trace regression model \eqref{modeltrm} can be represented as 
	\begin{equation}
	    \label{eq:trace_reg_matrix}
		y = \langle \Xmat, \Thetastar \ranglehs + \epsilon 
		= \langle \Xmat, \Umat ^ * {\Vmat ^ *} ^ \T \ranglehs + \epsilon 
		= \langle \Xmat \Vmat ^ *, \Umat ^ *\ranglehs + \epsilon.
	\end{equation}	
	Throughout, for a matrix $\Amat \in \R^{k_{1} \times k_{2}}$, we write $\vec(\Amat) \in \R^{k_{1}k_{2}}$ to denote the vectorization of $\Amat$.  For any $\bV \in \R^{d_2 \times r}$, let $\Xmat_{\Vmat} \in \R^{n \times r\done}$ denote the matrix whose $i$th row is $\vec(\Xmat_{i} \Vmat)$. Figure \ref{fig:reduced_model} illustrates the construction of $\bX_{\bV ^ *}$ when $r ^ * = 2$. Writing $\gammaU \defined \vec(\Umat)$, $\by = (y_1, \ldots, y_n) ^ \top$, and $\bepsilon = (\epsilon_1, \ldots, \epsilon_n) ^ \top$, we then deduce from \eqref{eq:trace_reg_matrix} that
	\begin{align*}
		\by = \Xmat_{\Vmat ^ *} \gammaU + \bepsilon.
	\end{align*}
	Suppose $\bV ^ *$ is known in advance. When $n \gg rd$, $\gammaU$ can be consistently estimated by ordinary least-squares to yield an estimator of $\Thetastar$. Given that ordinary least-squares is projecting $\by$ onto the column space of $\bX_{\bV}$, the rank-constrained least-squares problem \eqref{eq:rank_ols} reduces to finding the optimal $\bV$ so that the resulting $\bX_{\bV}$ captures the most variation of the response $\by$. This motivates our initial step to analyze the in-sample prediction risk. For any $\bV \in \R ^ {d_2 \times r}$, define the projection matrix $\bP_{\bV} := \bX_{\bV}(\bX_{\bV} ^ \top  \bX_{\bV}) ^ {-1}\bX_{\bV} ^ \top$. The following lemma shows that the in-sample prediction risk of $\hat\bTheta_{L_0}$ can be bounded by the supremum of projections of the noise vector $\bepsilon$ onto the column space of $\bX_{\bV}$. 
    \begin{lemma}
        \label{lem1}
        Consider the model in equation \eqref{modeltrm}.  If $r \geq \rstar$, then, the rank-constrained least-squares estimator satisfies
    	\begin{equation}
    	    \label{eq:sup_proj}
            \mathcal{R}(\Thetahatlzero(r)) 
			\leq \frac{4}{n} \sup_{\Vmat \in \R^{\dtwo \times 2r}} \Vert \projV \bepsilon \Vert_{2}^{2}.
		\end{equation}    
    \end{lemma}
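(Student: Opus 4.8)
The plan is to combine the standard constrained-least-squares \emph{basic inequality} with the low-rank structure of the residual matrix $\Thetahatlzero(r)-\Thetastar$, which will let us rewrite the noise cross-term as an inner product against a vector lying in the column space of $\Xmat_{\Vmat}$ for a suitable $\Vmat\in\R^{\dtwo\times 2r}$. First, since $r\ge\rstar$, the matrix $\Thetastar$ is feasible in \eqref{eq:rank_ols}, so writing $\hat\Thetamat:=\Thetahatlzero(r)$, optimality gives $\sum_i(y_i-\langle\Xmat_i,\hat\Thetamat\ranglehs)^2\le\sum_i(y_i-\langle\Xmat_i,\Thetastar\ranglehs)^2$. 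Plugging in $y_i=\langle\Xmat_i,\Thetastar\ranglehs+\epsilon_i$, expanding both squares and cancelling $\sum_i\epsilon_i^2$, I obtain the basic inequality
\[
n\,\mathcal{R}(\hat\Thetamat)=\sum_{i=1}^n\langle\Xmat_i,\hat\Thetamat-\Thetastar\ranglehs^2\;\le\;2\sum_{i=1}^n\epsilon_i\,\langle\Xmat_i,\hat\Thetamat-\Thetastar\ranglehs .
\]

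Next I would pass to the reduced parametrization. Set $\Delta:=\hat\Thetamat-\Thetastar$; since $\rank(\hat\Thetamat)\le r$ and $\rank(\Thetastar)=\rstar\le r$, we have $\rank(\Delta)\le 2r$, so $\Delta=\Umat\Vmat^\T$ for some $\Umat\in\R^{\done\times 2r}$ and $\Vmat\in\R^{\dtwo\times 2r}$ (padding with zero columns if $\rank(\Delta)<2r$). Using $\langle\Xmat_i,\Umat\Vmat^\T\ranglehs=\langle\Xmat_i\Vmat,\Umat\ranglehs=\langle\vec(\Xmat_i\Vmat),\vec(\Umat)\rangleeuclid$ together with the definition of $\Xmat_{\Vmat}$ (whose $i$th row is $\vec(\Xmat_i\Vmat)$), I get $\big(\langle\Xmat_i,\Delta\ranglehs\big)_{i\in[n]}=\Xmat_{\Vmat}\gammaU$ with $\gammaU:=\vec(\Umat)$; consequently $n\,\mathcal{R}(\hat\Thetamat)=\ltwonorm{\Xmat_{\Vmat}\gammaU}^2$ and $\sum_{i=1}^n\epsilon_i\langle\Xmat_i,\Delta\ranglehs=\langle\bepsilon,\Xmat_{\Vmat}\gammaU\rangleeuclid$.

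The last step is to recognize the noise term as a projection and apply Cauchy--Schwarz: since $\Xmat_{\Vmat}\gammaU\in\col(\Xmat_{\Vmat})$ and $\projV$ is the orthogonal projector onto $\col(\Xmat_{\Vmat})$, we have $\langle\bepsilon,\Xmat_{\Vmat}\gammaU\rangleeuclid=\langle\projV\bepsilon,\Xmat_{\Vmat}\gammaU\rangleeuclid\le\ltwonorm{\projV\bepsilon}\,\ltwonorm{\Xmat_{\Vmat}\gammaU}$. Feeding this into the basic inequality gives $\ltwonorm{\Xmat_{\Vmat}\gammaU}^2\le 2\ltwonorm{\projV\bepsilon}\,\ltwonorm{\Xmat_{\Vmat}\gammaU}$, hence $\ltwonorm{\Xmat_{\Vmat}\gammaU}\le 2\ltwonorm{\projV\bepsilon}$ (the claim being trivial if $\Xmat_{\Vmat}\gammaU=0$). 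Squaring, using $n\,\mathcal{R}(\hat\Thetamat)=\ltwonorm{\Xmat_{\Vmat}\gammaU}^2$, bounding by the supremum over all $\Vmat\in\R^{\dtwo\times 2r}$, and dividing by $n$ yields \eqref{eq:sup_proj}.

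I do not expect a serious obstacle here — this lemma is really the reduction that sets up the rest of the section. The two points that need care are the rank bookkeeping in the second step (the residual $\hat\Thetamat-\Thetastar$ has rank at most $2r$, not $r$, which is precisely why the supremum is over $d_2\times 2r$ rather than $d_2\times r$ matrices), and the minor technical nuisance that $\Xmat_{\Vmat}$ may fail to have full column rank, in which case $\projV$ and $(\Xmat_{\Vmat}^\T\Xmat_{\Vmat})^{-1}$ should be read via the orthogonal projector onto $\col(\Xmat_{\Vmat})$ / the Moore--Penrose pseudoinverse; this is harmless since the argument only uses that $\projV$ fixes $\col(\Xmat_{\Vmat})$. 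It is worth emphasizing that no assumption on the design enters anywhere.
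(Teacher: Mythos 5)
Your proposal is correct and follows essentially the same route as the paper's proof: the basic inequality from feasibility of $\Thetastar$, the rank-$2r$ factorization $\hat\Thetamat-\Thetastar=\Umat\Vmat^\T$, and the Cauchy--Schwarz/projection step via $\projV\Xmat_{\Vmat}=\Xmat_{\Vmat}$. The only cosmetic difference is that the paper inserts a supremum over all rank-$2r$ matrices before applying Cauchy--Schwarz, whereas you apply it directly to the realized $\Vmat$; your remarks on the $2r$ bookkeeping and on reading $\projV$ as the orthogonal projector when $\Xmat_{\Vmat}$ is rank-deficient are both apt.
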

    
    Lemma \ref{lem1} suggests that the complexity of the set $\projset=\projset(2r) \defined \{\projV\}_{\Vmat \in \R^{\dtwo \times (2r)}}$ determines the in-sample prediction risk of $\hat\bTheta_{L_0}(r)$. Note that the number of columns of $\bV$ is $2r$ instead of $r ^ *$, which is due to the fact that the maximum rank of $(\hat\bTheta_{L_0}(r) - \Thetastar)$ is $r + \rstar \leq 2r$. To quantify this complexity, we consider the metric space $(\projset, \Vert \cdot \Verths)$.  We say that $\netepsilon \subseteq \projset$ is an $\varepsilon$-net of $\projset$ if for any $\bP \in \projset$, there exists a $\tilde{\bP} \in \netepsilon$ such that $\Vert \bP - \tilde{\bP} \Verths \leq \varepsilon$.  We define the covering number, $\covnumepsilon(\projset)$, as the minimum cardinality of an $\varepsilon$-net of $\projset$.  The following theorem leverages a result \cite{basu2007} from real algebraic geometry to bound $\covnumepsilon(\projset)$. To the best of our knowledge, this tool is new to statistical analyses in high-dimensions.  To highlight the power of the tool, we give the proof immediately after the statement of the theorem.
	
	\begin{theorem}\label{theoremcoveringnumber}
		For any $\varepsilon < 1$ and any positive integer $r$, we have that 
		\begin{align*}
			\covnumepsilon (\projset(r)) \leq 2^{r\done} \left\{ \frac{12r\done n^{3}}{\varepsilon} \right\}^{r\dtwo + 1}.
		\end{align*}
	\end{theorem}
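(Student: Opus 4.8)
The plan is to present $\projset(r)$ as, up to a union of at most $2^{r\done}$ pieces, the image of a rational map of bounded degree defined on $\R^{r\dtwo}$, and then to bound its covering number by discretizing the \emph{range} of this map at resolution of order $\varepsilon/n$ and counting how many discrete values can be attained; that last count is a statement about the number of connected components of a semialgebraic subdivision of $\R^{r\dtwo}$, which is exactly what the result of \cite{basu2007} controls.

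\textbf{Step 1 (rational parametrization; origin of $2^{r\done}$).} The entries of $\bX_{\bV}$ are linear in $\bV$ and $\rank(\bX_{\bV}) \le \min(n, r\done)$, so $\col(\bX_{\bV})$ is spanned by the columns of $\bX_{\bV}$ indexed by some linearly independent set $J \subseteq [r\done]$. On the open semialgebraic set $U_J := \{\bV : \det((\bX_{\bV})_J^{\T}(\bX_{\bV})_J) > 0\}$ put $\bP^{(J)}_{\bV} := (\bX_{\bV})_J [(\bX_{\bV})_J^{\T}(\bX_{\bV})_J]^{-1}(\bX_{\bV})_J^{\T}$, the orthogonal projection onto the span of those columns; then $\projset(r) \subseteq \bigcup_J \{\bP^{(J)}_{\bV} : \bV \in U_J\}$, a union of at most $2^{r\done}$ sets. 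By Cramer's rule, on $U_J$ every entry of $\bP^{(J)}_{\bV}$ equals $p^{(J)}_{ab}(\bV)/q^{(J)}(\bV)$ with $q^{(J)} := \det((\bX_{\bV})_J^{\T}(\bX_{\bV})_J) > 0$ there, and $p^{(J)}_{ab}$, $q^{(J)}$ are polynomials of degree at most $2\min(n, r\done) \le 2 r\done$. It thus suffices to bound, for each $J$, the $\varepsilon$-covering number of $\{\bP^{(J)}_{\bV} : \bV \in U_J\}$ and then multiply by $2^{r\done}$.

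\textbf{Step 2 (discretizing the range).} Fix $J$, pick a mesh $\eta$ equal to a small constant multiple of $\varepsilon/n$, and round every matrix entry to the nearest multiple of $\eta$. Since $\bP^{(J)}_{\bV}$ is an orthogonal projection, $\opnorm{\bP^{(J)}_{\bV}} \le 1$, so its entries lie in $[-1,1]$ and the rounded matrix is within Frobenius distance $n\eta/2 \le \varepsilon/4$ of $\bP^{(J)}_{\bV}$; hence the number of distinct rounded matrices obtained from $\{\bP^{(J)}_{\bV}:\bV\in U_J\}$ dominates the $(\varepsilon/4)$-covering number of this set with arbitrary centers, and after summing over $J$ and passing, via a standard doubling argument, to centers in $\projset(r)$, the final bound changes only by absorbable constants. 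The rounded matrix changes only where some entry $p^{(J)}_{ab}(\bV)/q^{(J)}(\bV)$ crosses a half-mesh level $(2k+1)\eta/2$, i.e.\ on the zero set of $g_{ab,k}(\bV) := 2 p^{(J)}_{ab}(\bV) - (2k+1)\eta\, q^{(J)}(\bV)$; since the entries lie in $[-1,1]$, only the $|2k+1| \le 2/\eta$ levels are relevant, giving a family of $O(n^3/\varepsilon)$ polynomials, each of degree at most $2 r\done$. On any connected component of $U_J \setminus \bigcup_{a,b,k}\{g_{ab,k} = 0\}$ the denominator $q^{(J)}$ is positive, every entry is continuous and confined strictly between two consecutive half-mesh levels, so the rounded matrix is constant on that component. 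Therefore the number of attainable rounded matrices is at most the number of connected components of $\{\bV : q^{(J)}(\bV) > 0,\ g_{ab,k}(\bV) \ne 0 \ \text{for all } a,b,k\}$.

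\textbf{Step 3 (invoking \cite{basu2007}).} The set in the last display is a union of connected components of realizable sign conditions of a family of $s = O(n^3/\varepsilon)$ polynomials of degree $D = O(r\done)$ in $k = r\dtwo$ real variables; the quantitative bound of \cite{basu2007} on the number of such components is of the form $\binom{s}{\le k}\,(O(D))^{k}$. Substituting $s \asymp n^3/\varepsilon$, $D \asymp r\done$, $k = r\dtwo$, tracking the absolute constants, and multiplying by the $2^{r\done}$ of Step 1 produces a bound of the form $2^{r\done}\{12 r\done n^3/\varepsilon\}^{r\dtwo + 1}$.

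\textbf{Where the difficulty lies.} Two points need care. First, $\bX_{\bV}^{\T}\bX_{\bV}$ is generically rank-deficient (indeed always so when $r\done > n$), so $\bV \mapsto \bP_{\bV}$ is not a single rational map; Step 1's decomposition over the $2^{r\done}$ column subsets, together with the restriction to the chart $U_J$, is precisely what turns it into a genuine continuous rational map on a semialgebraic set, and this is what makes the theorem's ``no assumptions on the design'' (in particular, no relation between $n$ and $r\done$) legitimate. Second, and more substantially, landing on exactly the exponent $r\dtwo + 1$ and base $12 r\done n^3/\varepsilon$ requires the sharp form of the real-algebraic-geometry estimate---Basu's refinement of the Oleinik--Petrovsky--Thom--Milnor bound on connected components of sign conditions, with its precise dependence on the number of polynomials, their degree, and the ambient dimension---rather than a crude Milnor--Thom count; the careful accounting that converts $s \asymp n^3/\varepsilon$, $D \asymp r\done$ and $k = r\dtwo$ into the stated constants is the technical core of the argument.
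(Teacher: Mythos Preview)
Your proposal is correct and its architecture matches the paper's proof almost exactly: the same decomposition over column subsets $J\subseteq[r\done]$ producing the factor $2^{r\done}$, the same rational parametrization of the projection entries via Cramer's rule with numerator and denominator of degree at most $2|J|$, the same idea of discretizing the entrywise range at resolution $\asymp \varepsilon/n$ and reducing the covering number to a count of connected components of the complement of a union of polynomial level sets in $\R^{r\dtwo}$.

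The one genuine difference is in how the real-algebraic-geometry input is invoked. You keep the $s=O(n^{3}/\varepsilon)$ level-set polynomials separate and appeal to the sign-condition bound of Basu--Pollack--Roy, of the form $\binom{s}{\le k}(O(D))^{k}$; you explicitly say the ``sharp form'' is needed rather than a ``crude Milnor--Thom count.'' The paper instead shows that the crude Milnor--Thom bound (their Theorem~7.23 reference) already suffices, via a short trick: it multiplies all the level-set polynomials together, introduces one auxiliary variable $v_{0}$, and writes the complement of the union of level sets as the zero set of a \emph{single} polynomial $\Phi(v_{0},\bv)=\gamma(\bv)\prod_{i,j,t}v_{0}\bigl(F_{ij}(\bv)-\gamma(\bv)s_{t}\bigr)-1$ in $r\dtwo+1$ variables, whose degree is $O(|\sset|\,n^{2}m)$. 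This is where the exponent $r\dtwo+1$ in the statement comes from. Your route is arguably cleaner and, tracked carefully, would give exponent $r\dtwo$ rather than $r\dtwo+1$; the paper's route is more elementary in that it only needs the classical single-polynomial Betti-number bound, at the cost of the $v_{0}$ lifting and one extra power in the final estimate.
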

    \begin{proof}[Proof of Theorem \ref{theoremcoveringnumber}]	
	For an integer $k$, let $[k] \defined \{1, \dots, k\}$.  Now, for $\sset \subseteq [r\done]$, write $\Xmat_{\vmat, \sset}$ to denote the $\R^{n \times |\sset|}$ submatrix of $\Xmat_{\Vmat}$ with the columns indexed by $\sset$.  Then, for any fixed $\sset \subseteq [r\done]$, define the collection of projection matrices $\cP_{\sset}$ as 
	\begin{align*}
		\projsets \defined \{\Xmat_{\vmat, \sset} (\Xmat_{\vmat, \sset}^\T \Xmat_{\vmat, \sset})^{-1} \Xmat_{\vmat, \sset}^\T : \Vmat \in \R^{\dtwo \times r}, \det(\Xmat_{\vmat, \sset}^\T \Xmat_{\vmat, \sset}) \neq 0\}.
	\end{align*}
	Note that 
	\begin{align*}
		\projset \defined \{ \projV \}_{\Vmat \in \R^{\dtwo \times r}} \subseteq \bigcup_{\sset \subseteq [r\done]} \projsets.
	\end{align*}
	
	To further simplify notation, throughout this proof, we identify the matrix $\bV \in \RR^{\dtwo \times r}$ with the vector $\bv \in \RR^{r\dtwo}$ by viewing $\bv$ as the vectorization of $\bV$.  Fix $\{\bX_i\}_{i \in [n]}$ and $i, j \in [n]$ and consider the map
		\begin{align*}
			& \Phi_{ij}: \R ^ {r\dtwo} \backslash \{\vvec : \det ( \Xmat_{\vmat, \sset}^\T \Xmat_{\vmat, \sset} ) = 0 \} \to [-1, 1], \\
			& ~\qquad \bv \mapsto (\bP_{\bV, \cS})_{i,j}= \{\bX_{\bV, \cS}(\bX_{\bV, \cS} ^ {\top} \bX_{\bV, \cS}) ^ {-1} \bX_{\bV, \cS} ^{\top}\}_{ij}. 
		\end{align*}
		We claim that $\Phi_{ij}$ is a rational function of polynomials of order at most $2|\sset|$.  To see this, for any invertible matrix $\bA \in \R^{p \times p}$ and $u, t \in [p]$, the $(u, t)$ entry of the adjugate of $\bA$ is given by $\adj(\bA)_{ut} \defined (-1)^{u + t} \det(\bA_{-u, -t})$.  Then, by Cramer's rule, 
		\begin{align*}
			\bA_{i,j}^{-1} = (\det (\bA))^{-1} \adj(\bA)_{ij}.
		\end{align*}
		Given that each entry of $\Xmat_{\Vmat}^\T \Xmat_{\Vmat} \in \R^{|\sset| \times |\sset|}$ is a quadratic function with respect to $\vvec$, it follows that $(\det (\Xmat_{\Vmat}^\T \Xmat_{\Vmat}))^{-1}$ is a polynomial of order at most $2|\sset|$ and $\adj(\bA)_{ut}$ is a polynomial of order at most $2|\sset| - 2$.  Hence, each entry of $\projV$ is a rational function of polynomials of order at most $2|\sset|$.  Denote the $(i,j)$ entry of $\projV$ by $\Phi_{i,j}(\vvec)$, which has representation $\Phi_{i,j}(\vvec) = F_{i,j}(\vvec) / \gamma(\vvec)$ for polynomials $F_{i,j}(\vvec)$ and $\gamma(\vvec)$ in the domain of $\Phi_{i,j}$.
		
		Now for any $\varepsilon > 0$, consider a monotonically increasing sequence $-1 = s_1 < \ldots < s_m = 1$ such that $m = \lceil{2 / \varepsilon}\rceil + 1$ and $|s_{t + 1} - s_t| \le \varepsilon$ for any $t \in [m - 1]$. Consider the level sets: $\cC_{ijt}\defined \{\bv \in \RR ^ {2r\dtwo}: (F_{i,j}(\bv) - \gamma(\bv)s_{t}) = 0\}, t \in [m]$. Note that these level sets partition the entire $\RR ^ {r\dtwo}$ into multiple connected components, within each of which any two points $\bv_1, \bv_2$ satisfy $|\Phi_{ij}(\bv_1) - \Phi_{ij}(\bv_2)| \le \varepsilon$. Consider the union of all such level sets over $i, j, t$: 
		\begin{align*}
			\cC \defined \bigcup_{i, j \in [n], t \in [m]} \cC_{ijt} = \biggl\{\bv \in \RR^ {r\dtwo}: \prod_{i,j \in [n], t \in [m]} (F_{i,j}(\bv) - \gamma(\bv)s_{t}) = 0\biggr\}. 
		\end{align*}
		For any two points $\bv_1$ and $\bv_2$ in a single connected component of the complement, $\cC ^ \C$, $|\Phi_{ij}(\bv_1) - \Phi_{ij}(\bv_2)| \le \varepsilon$ for all $(i, j) \in [n] \times [n]$. Therefore, $\fnorm{\bP_{\bv_1, \cS} - \bP_{\bv_2, \cS}} \le n\varepsilon$. This implies that $\covnum_{n\varepsilon}(\cP_{\cS})$ is bounded by the number of connected components of $\cC ^ \C$. Define 
		\begin{align*}
			\Phi: \RR ^ {r\dtwo + 1} \to \RR, (v_0, \bv ^ {\top}) ^ {\top} \mapsto \biggl\{ \gamma(\bv) \times \prod_{i,j \in [n], t \in [m]} v_0(F_{i,j}(\bv) - \gamma(\bv)s_{t})\biggr\} - 1. 
		\end{align*}
		We have that $\Phi ^ {-1}(0)$ shares the same number of connected components as $\cC ^ \C$. By Theorem 7.23 of \cite{basu2007}, the number of connected components of $\Phi ^ {-1}(0)$, which is the $0$th Betti number of $\Phi ^ {-1}(0)$, is bounded by $\{(4|\sset| - 1) n ^2 m\} ^ {r\dtwo + 1}$. Therefore, for any $\varepsilon < 1$, 
		\begin{align*}
			\covnum_{n\varepsilon}(\cP) \le \biggl\{(4|\sset| - 1) n ^2 \frac{3}{\varepsilon}\biggr\} ^ {r\dtwo + 1}.
		\end{align*}
		Then we deduce that
		\begin{align*}
			\covnumepsilon (\projsets) \leq \left\{ \frac{3(4|\sset| - 1)n^{3}}{\varepsilon} \right\}^{r\dtwo + 1}.
		\end{align*}

	    Finally, since $\projset \subseteq \bigcup_{\sset \subseteq [r\done]} \projsets$, we have
		\begin{align*}
			\covnumepsilon(\projset) 
			\leq \sum_{\sset \subseteq [r\done]} \covnumepsilon(\projsets) 
			\leq \sum_{\sset \subseteq [r\done]} \left\{ \frac{3(4|\sset| - 1)n^{3}}{\varepsilon} \right\}^{r\dtwo + 1}
			\leq 2^{r\done} \left\{ \frac{12r\done n^{3}}{\varepsilon} \right\}^{r\dtwo + 1}, 
		\end{align*}
		which concludes the proof.
	\end{proof}

	To bound the in-sample prediction risk with high-probability, we need the following mild assumption, which is standard in high-dimensional models.  In order to state our assumption, we first define sub-Gaussian random variables.
	
	\begin{definition}
	    For a random variable $\xi$ valued in $\R$, define the $\psi_{2}$-norm of $\xi$, denoted $\Vert \xi \Vert_{\psi_{2}}$, as 
	    \begin{align*}
	        \Vert \xi \Vert_{\psi_{2}} \defined \inf_{t>0}\{\e \exp(t^{-2} \xi^{2}) \leq 2 \}.
	    \end{align*}
	
        Then, define the family of sub-Gaussian random variables with parameter $K$ as
        \[
            \sg(\sgparam) \defined \big\{\xi: \Vert \xi \Vert_{\psi_{2}} \le K \big\}.
        \]
		More generally, for $p$-dimensional real-valued random vectors, we define the sub-Gaussian family with parameter $K$ as
		\begin{align*}
            \sg_p(\sgparam) \defined \Big\{\xivec: \sup_{\vvec \in \RR ^ p, \|\vvec\|_2 = 1} \Vert \langle \xivec, \vvec \rangleeuclid \Vert_{\psi_{2}} \le K\Big\}.
		\end{align*}
	\end{definition}

	\begin{assumption}\label{assumptionepsilon}
		The noise $\epsilon \in \sg(\sgparamepsilon)$ with mean zero and variance $\sigmaepsilonsq$ and is independent of $\Xmat$.
	\end{assumption}
	
	Now, we can state our main result for $\Thetahatlzero$.
	
	\begin{theorem}\label{theorempredictionrisk}
		Suppose we have $n$ observations $(\Xmat_i, y_i)_{i \in [n]}$ from model \eqref{modeltrm} with $(\epsilon_i)_{i \in [n]}$ independent.  Under Assumption \ref{assumptionepsilon}, if $\rhat \geq \rstar$, then there exist $\cone, \ctwo > 0$ depending on $\sigmaepsilonsq$ and $\sgparamepsilon$ such that
		\begin{align*}
		    \mathcal{R}(\Thetahatlzero(r))
		    \leq \cone \frac{rd\log(n)}{n}
		\end{align*}
		with probability at least $1 - 4 \exp(-\ctwo rd\log(n))$.
	\end{theorem}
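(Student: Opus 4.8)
The plan is to combine Lemma~\ref{lem1} with the covering-number bound of Theorem~\ref{theoremcoveringnumber} through a standard discretization-and-union-bound argument; since the conceptual work is already contained in those two results, what remains is largely bookkeeping of constants together with one mild case split. By Lemma~\ref{lem1} it suffices to bound $S \defined \sup_{\Vmat \in \R^{\dtwo \times 2r}} \ltwonorm{\projV \bepsilon}^{2}$ by a constant multiple of $rd\log(n)$ on an event of the claimed probability. I would first dispose of the vacuous regime $rd\log(n) > n$: there $S \le \ltwonorm{\bepsilon}^{2} = \sum_{i=1}^{n} \epsilon_i^{2}$, and Bernstein's inequality for this sub-exponential sum --- invoking its \emph{linear} tail, so that the deviation term is itself of order $rd\log(n)$ --- bounds $\ltwonorm{\bepsilon}^{2}$ by a constant (depending on $\sigmaepsilonsq$ and $\sgparamepsilon$) times $rd\log(n)$ with probability at least $1 - \exp(-\ctwo\, rd\log(n))$. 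From here on, therefore, I assume $rd\log(n) \le n$, which in particular yields $\log(r\done) \le \log(rd) \le \log(n)$.

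Next, fix the covering radius $\varepsilon = 1/n$ and let $\netepsilon$ be a minimal $\varepsilon$-net of $(\projset(2r), \Vert\cdot\Verths)$. Theorem~\ref{theoremcoveringnumber}, applied with $r$ replaced by $2r$ and combined with $\done \le \dtwo = d$ and $\log(r\done) \le \log(n)$, gives $\log|\netepsilon| \le C_{1}\, rd\log(n)$ for a universal constant $C_{1}$. For a fixed $\bP \in \netepsilon$, $\bP$ is an orthogonal projection of some rank $\rho \le 2r\done \le 2rd$, so $\ltwonorm{\bP\bepsilon}^{2} = \bepsilon^{\T} \bP \bepsilon$ has expectation $\sigmaepsilonsq \rho$, Frobenius norm $\sqrt{\rho}$, and operator norm $1$; since $\bepsilon \in \sg_{n}(\sgparamepsilon)$ by Assumption~\ref{assumptionepsilon}, the Hanson--Wright inequality gives, for every $C' \ge 1$,
\[
\PP\bigl( \ltwonorm{\bP\bepsilon}^{2} > (2\sigmaepsilonsq + C')\, rd\log(n) \bigr) \le 2\exp\bigl( -c(C', \sgparamepsilon)\, rd\log(n) \bigr),
\]
where, for fixed $\sgparamepsilon$, the exponent constant $c(C', \sgparamepsilon)$ grows without bound (indeed, at least linearly) as $C' \to \infty$. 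Choosing $C'$ large enough that $c(C', \sgparamepsilon) \ge 2C_{1}$ and taking a union bound over $\netepsilon$, we get $\max_{\bP \in \netepsilon} \ltwonorm{\bP\bepsilon}^{2} \le C_{2}\, rd\log(n)$, with $C_{2} \defined 2\sigmaepsilonsq + C'$, on an event of probability at least $1 - 2\exp(-C_{1}\, rd\log(n))$.

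Finally, to pass from the net to the full supremum, for an arbitrary $\bV$ I pick $\tilde{\bP} \in \netepsilon$ with $\Verths$-distance at most $1/n$ to $\projV$, whence $\ltwonorm{\projV \bepsilon} \le \ltwonorm{\tilde{\bP} \bepsilon} + \fnorm{\projV - \tilde{\bP}}\, \ltwonorm{\bepsilon} \le \ltwonorm{\tilde{\bP}\bepsilon} + \tfrac{1}{n} \ltwonorm{\bepsilon}$, using $\opnorm{\cdot} \le \fnorm{\cdot}$. On the additional event $\{\ltwonorm{\bepsilon}^{2} \le C_{3} n\}$ --- which holds with probability at least $1 - \exp(-c n)$ by Bernstein --- and using $1/n \le rd\log(n)$, this is at most $(\sqrt{C_{2}} + \sqrt{C_{3}})\sqrt{rd\log(n)}$; squaring and substituting into Lemma~\ref{lem1} gives $\mathcal{R}(\Thetahatlzero(r)) \le \cone\, rd\log(n)/n$ with $\cone = 4(\sqrt{C_{2}} + \sqrt{C_{3}})^{2}$, on an event of probability at least $1 - 2\exp(-C_{1} rd\log(n)) - \exp(-c n) \ge 1 - 4\exp(-\ctwo\, rd\log(n))$ for a suitable $\ctwo > 0$. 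I do not anticipate a genuine obstacle: Lemma~\ref{lem1} and Theorem~\ref{theoremcoveringnumber} carry essentially all of the weight, and the only points that need care are (i) handling the vacuous regime $rd\log(n) \gtrsim n$ via the sub-exponential --- rather than Gaussian --- tail of $\ltwonorm{\bepsilon}^{2}$, and (ii) choosing the threshold constant $C'$ large relative to the net's log-cardinality so that the union bound still leaves a failure exponent of order $rd\log(n)$.
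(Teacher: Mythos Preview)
Your proposal is correct and follows essentially the same route as the paper's proof: reduce to the supremum of $\ltwonorm{\projV\bepsilon}^{2}$ via Lemma~\ref{lem1}, discretize using the covering-number bound of Theorem~\ref{theoremcoveringnumber}, apply Hanson--Wright to each net element together with a union bound, and control the discretization error through a high-probability bound on $\ltwonorm{\bepsilon}^{2}$. The only cosmetic differences are your choice of covering radius $\varepsilon = 1/n$ (the paper takes $\varepsilon = r\max(d_1,d_2)/n$), your use of the triangle inequality rather than $(a+b)^2 \le 2a^2 + 2b^2$, and your explicit treatment of the regime $rd\log(n) > n$, which the paper handles by simply invoking $rd < n$ at the end.
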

	
	Theorem \ref{theorempredictionrisk} should be compared with the results of \cite{rohde2011} and \cite{koltchinskii2011}, who proved bounds on in-sample prediction for the estimator $\Thetahatnn$.  Up to logarithmic factors, both $\Thetahatlzero$ and $\Thetahatnn$ achieve the same in-sample prediction risk; however, the crucial difference between our result and the existing results is the assumption, or lack thereof, on the design matrices, $\Xmat_{i}$.  The estimator $\Thetahatnn$, much like the lasso estimator for linear models, requires a restricted eigenvalue type assumption in order to enjoy near optimal rates of in-sample prediction risk.  By comparison, Theorem \ref{theorempredictionrisk} imposes no such requirement.
	
	In the theorem above, we assume that the tuning parameter, $r$, exceeds the true rank, $\rstar$.  The following corollary extends the result to the setting where $r < \rstar$. Moreover, it accommodates potential model misspecification by allowing data to come from a general signal-plus-noise model. 

	\begin{corollarytheorem}\label{corollarypredictionrisk}
		Consider $n$ observations $(y_i)_{i \in [n]}$ from a signal-plus-noise model
		\begin{align*}
			y = f(x) + \epsilon, 
		\end{align*}
		where $x \in \mathscr{X}$. 
		Assume that $\{\epsilon_{i}\}_{i \in [n]}$ are independent and identically distributed sub-Gaussian random variables with parameter $\sgparamepsilon$.  If $\Xmat_{i}$ is independent of $\epsilon_{i}$, then there exist $\cone, \ctwo > 0$ depending on $\sigmaepsilonsq$ and $\sgparamepsilon$ such that 
		\begin{align*}
            &\frac{1}{n}\sum_{i=1}^{n} (f(x_{i}) - \langle \Xmat_{i}, \Thetahatlzero\ranglehs)^{2} \\ 
			&\phantom{\leq} \leq \Big\{ 
			\Big[ \min_{\substack{\Thetamat \in \R^{\done \times \dtwo}, \\\rank(\Thetamat) \leq \rhat}} \frac{1}{n} \sum_{i=1}^{n} (f(x_{i}) - \langle \Xmat_{i}, \Thetamat \ranglehs)^{2} \Big]^{1/2}
			+ \Big[\cone \frac{rd\log(n)}{n} \Big] ^ {1/2} \Big\}^{2}
		\end{align*}
		with probability at least $1 - 4 \exp(-\ctwo rd\log(n))$.
	\end{corollarytheorem}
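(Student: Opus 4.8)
The plan is to reduce the general signal-plus-noise setting to the "oracle-centered" decomposition that already underlies Theorem \ref{theorempredictionrisk}, and then invoke Lemma \ref{lem1} and Theorem \ref{theoremcoveringnumber} essentially verbatim. Write $\muvec = (f(x_1), \dots, f(x_n))^\T$ and let $\Thetamat^{\circ}$ be a minimizer of $\frac1n\sum_i (f(x_i) - \langle \Xmat_i, \Thetamat\ranglehs)^2$ over rank-$\rhat$ matrices, with fitted vector $\bmu^{\circ}$ having $i$th entry $\langle \Xmat_i, \Thetamat^{\circ}\ranglehs$. The key observation is that, with respect to the "design" response $\by$, the estimator $\Thetahatlzero(\rhat)$ is still the rank-constrained least-squares fit, so its fitted vector $\hat\bmu$ (with $i$th entry $\langle \Xmat_i, \Thetahatlzero\ranglehs$) satisfies the basic inequality $\ltwonorm{\by - \hat\bmu}^2 \le \ltwonorm{\by - \bmu^{\circ}}^2$. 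Substituting $\by = \muvec + \bepsilon$ and expanding gives
\begin{align*}
	\ltwonorm{\muvec - \hat\bmu}^2 \le \ltwonorm{\muvec - \bmu^{\circ}}^2 + 2\inn{\bepsilon, \hat\bmu - \bmu^{\circ}}_2.
\end{align*}
This is the analogue of the first step in the proof of Lemma \ref{lem1}, but with the oracle fit $\bmu^{\circ}$ replacing $\Xmat_{\Vmat^*}\gammaU$; the extra term $\ltwonorm{\muvec - \bmu^{\circ}}^2 = n \cdot \min_{\rank \le \rhat} \frac1n\sum_i (f(x_i) - \langle \Xmat_i, \Thetamat\ranglehs)^2$ is exactly the approximation error appearing on the right-hand side of the corollary.

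Next I would control the cross term. Both $\hat\bmu$ and $\bmu^{\circ}$ lie in the column space of some $\Xmat_{\Vmat}$ with $\Vmat \in \R^{\dtwo \times 2\rhat}$ (since $\hat\Thetamat_{L_0} - \Thetamat^{\circ}$ has rank at most $2\rhat$), so $\hat\bmu - \bmu^{\circ} = \projV(\hat\bmu - \bmu^{\circ})$ for that $\Vmat$, whence
\begin{align*}
	\inn{\bepsilon, \hat\bmu - \bmu^{\circ}}_2 = \inn{\projV\bepsilon, \hat\bmu - \bmu^{\circ}}_2 \le \ltwonorm{\projV\bepsilon}\,\ltwonorm{\hat\bmu - \bmu^{\circ}} \le \Big(\sup_{\Vmat \in \R^{\dtwo \times 2\rhat}}\ltwonorm{\projV\bepsilon}\Big)\big(\ltwonorm{\muvec - \hat\bmu} + \ltwonorm{\muvec - \bmu^{\circ}}\big).
\end{align*}
Writing $a = \ltwonorm{\muvec - \hat\bmu}$, $b = \ltwonorm{\muvec - \bmu^{\circ}}$, and $T = \sup_{\Vmat}\ltwonorm{\projV\bepsilon}$, the basic inequality becomes $a^2 \le b^2 + 2T(a+b)$, a quadratic in $a$ that yields $a \le b + 2T + \sqrt{4T^2 + 4Tb} \le \dots$; more cleanly, completing the square gives $(a - T)^2 \le (b+T)^2 + 2Tb \le (b + 2T)^2$, hence $a \le b + 3T$, and therefore $a \le b + $ (constant)$\cdot T$. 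Dividing by $n$, $\frac1n\ltwonorm{\muvec-\hat\bmu}^2 \le \{ (\frac1n b^2)^{1/2} + c(\frac1n T^2)^{1/2}\}^2$, which has precisely the claimed form once $\frac1n T^2$ is bounded.

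The remaining ingredient is the high-probability bound $\frac1n T^2 = \frac1n \sup_{\Vmat \in \R^{\dtwo \times 2\rhat}}\ltwonorm{\projV\bepsilon}^2 \lesssim \rhat d \log(n)/n$ with the stated exponential probability — but this is exactly the content established in the proof of Theorem \ref{theorempredictionrisk} via Lemma \ref{lem1} and the covering-number bound of Theorem \ref{theoremcoveringnumber} (a chaining/union-bound argument over the $\varepsilon$-net of $\projset(2\rhat)$, using sub-Gaussianity of $\bepsilon$ from the i.i.d.\ assumption, together with the fact that each $\projV$ has rank at most $2\rhat d$). Nothing in that argument used $\rhat \ge \rstar$ or the model being correctly specified: it only used that $\bepsilon$ is a sub-Gaussian vector independent of the design and that we are projecting onto the rank-$(\le 2\rhat d)$ subspaces indexed by $\projset(2\rhat)$. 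I expect the main obstacle to be purely bookkeeping: making sure the constant-chasing in the quadratic-inequality step produces the exact "$\{\sqrt{\text{approx}} + \sqrt{\text{estimation}}\}^2$" form rather than a weaker "$\text{approx} + \text{estimation}$" bound, and confirming that the event on which $T$ is controlled is the same event (same $\cone, \ctwo$) as in Theorem \ref{theorempredictionrisk} so the probability statement carries over unchanged.
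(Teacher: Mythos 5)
Your proposal is correct and follows essentially the same route as the paper's proof: decompose around the best rank-$\rhat$ approximation $\Thetatilde$, apply the basic inequality, control the cross term via Cauchy--Schwarz after observing that $\hat\bmu-\bmu^{\circ}$ lies in the column space of some $\Xmat_{\Vmat}$ with $\Vmat\in\R^{\dtwo\times 2\rhat}$, and reuse the bound on $\sup_{\Vmat}\Vert\projV\bepsilon\Vert_2^2$ from the proof of Theorem \ref{theorempredictionrisk}. The only cosmetic difference is that you solve the quadratic inequality $a^2\le b^2+2T(a+b)$ directly (where completing the square actually gives the cleaner $a\le b+2T$, not $b+3T$), whereas the paper divides through by $a$ and bounds each term separately; both yield the claimed form up to the unspecified constant $\cone$.
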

	
	Compared with Theorem \ref{theorempredictionrisk}, Corollary \ref{corollarypredictionrisk} has an additional term denoting the best attainable risk using a rank-$r$ approximation; when $r \geq \rstar$ and $f(x_i) = \langle \bX_i, \Thetamat ^ *\ranglehs$, the best approximation error is zero and we recover Theorem \ref{theorempredictionrisk}.
	
	\section{Testing in Signal-Plus-Noise Models}\label{sectiontesting}
	
	\subsection{A General Power Analysis of a Permutation Test}\label{sectiontestinggeneral}
	Consider the following general signal-plus-noise model:
	\begin{align}\label{modelsignalplusnoise}
		y = f(x) + \epsilon \defined \e(y | x) + \epsilon,
	\end{align}
	where $x$ is a random covariate valued in $\mathscr{X}$. 
	We are interested in the hypotheses testing problem 
	\begin{align}\label{hypothesesnull}
		H_{0}: f \equiv 0 &&& H_{1}: f \in \mathscr{F}, f\not\equiv 0
	\end{align}
	for some prespecified function class $\mathscr{F}$.  We discuss some examples of $\mathscr{F}$ in Sections \ref{sectiontestinglm} -- \ref{sectiontestingtrm}.  To test these hypotheses, we consider a flexible permutation test.  We note that the application of permutation tests to signal-plus-noise models is not novel.  For example, the seminal work of \cite{freedman1983} proposed a permutation test for a collection of covariates in a low-dimensional linear model.  However, to the best of our knowledge, the theory of permutation tests for high-dimensional models has not been explored, particularly the power of permutation tests.  Under mild assumptions, such as exchangability of $(\epsilon_{i})_{i\in[n]}$ given $(x_{i})_{i\in[n]}$,  it is easy to derive a test statistic that controls type I error under the permutation null hypothesis.  However, the sparsity rate enters in the power under the alternative.  In Theorem \ref{theoremtestingalternative} below, we characterize explicitly this dependence of power on the sparsity. 
	
	Before presenting the test statistic, we need to establish some notation and facts from enumerative combinatorics regarding permutations that are used throughout the section. Suppose we have $n$ independent observations $(x_i, y_i)_{i \in [n]}$ from model \eqref{modelsignalplusnoise}. Let $\Pi = \Pi_{n}$ denote the set of all permutations over $[n]$.  For a given $\pi \in \Pi$, we write $\fpi(x_{i}) \defined \e(y_{\pi(i)} | x_{i})$, noting that if $\pi(i) \neq i$, then $\fpi(x_{i}) = 0$.  We define $\pizero$ to be the identity permutation on $[n]$.  Moreover, a permutation $\pi \in \Pi$ induces a partition of $[n]$ into $K(\pi)$ cycles, where a cycle is an index subset $\{i_{1}, \dots, i_{k}\}$ such that $\pi(i_{j}) = i_{j+1}$ for $j \in [k-1]$ and $\pi(i_{k}) = i_1$. Let
	\begin{align*}
		\Pitilde \defined \Pitilde_{n} = \{\pi \in \Pi : K(\pi) \leq \log^{2}(n)\}.
	\end{align*}
	We make the following assumptions.
	\begin{assumption}\label{assumptiontestingmoments}
		The mean $f(x)$ satisfies that $\e [f(x)] = 0$, $\e [f^{2}(x)] = \sigmamusq$, and $\e [f^{4}(x)] < \infty$.  The error $\epsilon$ satisfies that $\e (\epsilon) = 0$ and $\e(\epsilon ^ {2}) = \sigmaepsilonsq$ and is independent of $x$.
	\end{assumption}

	\begin{assumptionstar}\label{assumptiontestingmeansubgaussian}
		The mean $f(x)$ satisfies that $\var(f^{2}(x)) \leq \kol \sigmamu^{4}$ for some constant $\kol > 0$.
	\end{assumptionstar}
	
	\begin{assumption}\label{assumptiontestingconsistency}
		For a fixed $\delta > 0$, there exists an estimator $\fhat : \mathscr{X} \times (\mathscr{X} \times \R)^{n} \times \Pi \to \R$ and a sequence $\ell_{n}$ (possibly depending on $\delta$) such that 
		\begin{enumerate}[label=(\roman*)]
			\item the estimator $\hat{f}$ is equivariant in the sense that for any $\pi \in \Pi$,
			\begin{align*}
				\fhat(x_{i} ; (x_{j}, y_{j})_{j=1}^{n} ; \pi) = \fhat(x_{i} ; (x_{j}, y_{\pi(j)})_{j=1}^{n} ; \pizero).
			\end{align*}
			
			\item for $n$ sufficiently large, 
			\begin{align*}
				\min_{\pi \in \Pitilde \cup \{\pizero\}} \p\Big\{ \sum_{i=1}^{n} [\fhat(x_{i} ; (x_{j}, y_{j})_{j=1}^{n} ; \pi) - \fpi(x_{i})]^{2} \leq \ell_{n} \Big\} \geq 1 - \delta.
			\end{align*}
		\end{enumerate}
	\end{assumption}
		
	Temporarily fix $\pi \in \Pi$. For convenience, let $\fhatpi(\cdot) \defined \fhat(\cdot ; (x_{j}, y_{j})_{j=1}^{n} ; \pi)$.  Now, given an estimation procedure $\fhat : \mathscr{X} \times (\mathscr{X} \times \R)^{n} \times \Pi \to \R$ satisfying Assumption \ref{assumptiontestingconsistency}, we define $\Lambdapi$ as
	\begin{align*}
		\Lambdapi \defined \Lambdapi(\fhat) = \sum_{i=1}^{n} [\fhatpi(x_{i})]^{2}.
	\end{align*}
	Then, our p-value is given by
	\begin{align*}
		\phi(\fhat) \defined \frac{1}{|\Pi|} \sum_{\pi \in \Pi} \indic{\Lambdapizero(\fhat) \leq \Lambdapi(\fhat)}.
	\end{align*}
	
	Assumption \ref{assumptiontestingmoments} is standard, imposing an independence assumption and some moment conditions on the model.  The requirement for the existence of the fourth moment of $f(x_{i})$ is to ensure the concentration of $\Vert \muvec \Vert_{2}^{2}$ around $n\sigmamusq$.  The next assumption, \ref{assumptiontestingmeansubgaussian}, is a technical condition that allows for a faster concentration of $\Vert \muvec \Vert_{2}^{2}$; the faster concentration yields a sharper rate in the contiguous alternative.  For example, if the $f(x_{i})$ are Gaussian, then Assumption \ref{assumptiontestingmeansubgaussian} is satisfied with $\kol = 3$.  
	
	Assumption \ref{assumptiontestingconsistency} is a very natural assumption, albeit technical.  For the first part, the symmetry in the estimation procedure, $\fhat(\cdot)$, implies that $\Lambdapi$ is identically distributed under the null hypothesis for all $\pi \in \Pi$.  For the second half, we assume that $\fhatpi(\cdot)$ is a consistent estimator of $\fpi(\cdot)$ for any $\pi \in \Pi$ at a rate slightly faster than $\ell_{n}$.  In particular, for most $\pi \in \Pi$, the estimator $\fhatpi(\cdot)$ approximates the zero function.  To see this, let $C_{1}, \dots, C_{K(\pi)}$ denote a fixed representation of the $K(\pi)$ cycles, for example as expressed in \emph{standard representation} (see \cite{stanley2011} for a formal definition).  For $j \in [K(\pi)]$ and $i \in C_{j}$, let $m(i)$ denote the index of $i$ in $C_{j}$.  Then, define the sets $\Asetpione$, $\Asetpitwo$, and $\Asetpithree$ as follows:
	\begin{align*}
		&\Asetpione \defined \bigcup_{j \in [K(\pi)]} \{i \in C_{j} : m(i) \text{ is odd and } m(i) \neq |C_{j}|\}, \\ 
		&\Asetpitwo \defined \bigcup_{j \in [K(\pi)]} \{i \in C_{j} : m(i) \text{ is even}\}, \\ 
		&\Asetpithree \defined [n] \cap (\Asetpione \cup \Asetpitwo)^\C.
	\end{align*}
	For example, for the permutation expressed by the cycles  $(4321)(765)(8)$, we have $\Asetpione = \{2, 4, 7\}$, $\Asetpitwo = \{1, 3, 6\}$, and $\Asetpithree = \{5, 8\}$.  Intuitively, $\Asetpione$ and $\Asetpitwo$ are two sets of observations such that, within each set, the covariates and responses are mutually independent.  Therefore, for $i \in \Asetpione \cup \Asetpitwo$, we have that $\fpi(x_{i}) = 0$.  The other set, $\Asetpithree$, are the remaining observations.  To bound the error in the remaining observations, we note that $|\Asetpithree| \le K(\pi)$ and that $\e K(\pi) / \log(n) \to 1$ as $n \to \infty$, where the expectation is with respect to the uniform probability measure over $\Pi$ (cf. \cite{stanley2011}).  Now, by Markov's inequality, for large values of $n$, 
	\begin{align*}
		\frac{| \Pitilde^\C |}{| \Pi |} = \p(K(\pi) > \log^{2}(n)) \leq \frac{2}{\log(n)} \to 0.  
	\end{align*}
	This leads to the following lemma, which asserts that, for $\pi \in \Pitilde$, the conditional mean function, $\fpi(\cdot)$, is approximately zero. 
	
	\begin{lemma}\label{lemmapermutationmeanremainder}
		Under Assumption \ref{assumptiontestingmoments}, for any $\delta > 0$, there exists $\cone > 0$ depending on $\delta$ and $\E f ^ 4(x)$ such that
		\begin{align*}
			\min_{\pi \in \Pitilde} \p \Big\{\sum_{i \in \Asetpithree} [\fpi(x_{i})]^{2} \leq \log^{2}(n) \sigmamusq + \cone \log(n) \Big\} \geq 1 - \delta.
		\end{align*}
	\end{lemma}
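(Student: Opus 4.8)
The plan is to show that the sum over $\Asetpithree$ collapses to a sum of at most $\log^{2}(n)$ i.i.d.\ terms, one for each fixed point of $\pi$, and then to control it by Chebyshev's inequality.

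First, from the definitions of $\Asetpione$, $\Asetpitwo$, and $\Asetpithree$, an index $i$ in cycle $C_{j}$ lies in $\Asetpithree$ exactly when $m(i)$ is odd and $m(i) = |C_{j}|$, i.e.\ $i$ is the last element of an odd-length cycle; in particular every fixed point of $\pi$ (a cycle of length one) belongs to $\Asetpithree$. By the definition of $\fpi$ recorded in the text, $\fpi(x_{i}) = 0$ whenever $\pi(i) \neq i$, whereas $\fpi(x_{i}) = \E(y_{i} \mid x_{i}) = f(x_{i})$ when $\pi(i) = i$; here I use that $\epsilon$ is mean-zero and independent of $x$. Consequently, writing $S(\pi) \defined \sum_{i\,:\,\pi(i) = i} f(x_{i})^{2}$, the only nonzero contributions to $\sum_{i \in \Asetpithree}[\fpi(x_{i})]^{2}$ come from fixed points, so
\begin{align*}
\sum_{i \in \Asetpithree} [\fpi(x_{i})]^{2} = S(\pi).
\end{align*}

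Next, fix $\pi \in \Pitilde$ and let $N = N(\pi)$ be its number of fixed points. Since distinct fixed points are distinct cycles, $N \le K(\pi) \le \log^{2}(n)$. Once $\pi$ is fixed, the fixed-point index set is deterministic and the $x_{i}$ are i.i.d., so $S(\pi)$ is a sum of $N$ i.i.d.\ copies of $f(x)^{2}$, with mean $\sigmamusq$ and variance $\var(f^{2}(x)) = \E f^{4}(x) - \sigmamu^{4} \le \E f^{4}(x)$, which is finite by Assumption \ref{assumptiontestingmoments}. Hence $\E S(\pi) = N\sigmamusq \le \log^{2}(n)\sigmamusq$ and $\var(S(\pi)) \le \log^{2}(n)\,\E f^{4}(x)$. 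Taking $\cone = \sqrt{\E f^{4}(x)/\delta}$, Chebyshev's inequality gives
\begin{align*}
\p\big(S(\pi) > \log^{2}(n)\sigmamusq + \cone\log(n)\big) \le \p\big(S(\pi) - \E S(\pi) > \cone\log(n)\big) \le \frac{\var(S(\pi))}{\cone^{2}\log^{2}(n)} \le \frac{\E f^{4}(x)}{\cone^{2}} = \delta.
\end{align*}
Because $\cone$ does not depend on $\pi$, this pointwise estimate gives $\min_{\pi \in \Pitilde}\p\{S(\pi) \le \log^{2}(n)\sigmamusq + \cone\log(n)\} \ge 1 - \delta$, which together with the identity above is the assertion; no union bound over $\Pitilde$ is required. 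I do not foresee a real obstacle: the one point needing care is the combinatorial reduction to fixed points — it is what ensures the summands are identically distributed rather than a mixture of $f^{2}$-values and zeros — after which the bound is a one-line second-moment argument.
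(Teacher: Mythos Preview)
Your proof is correct and follows essentially the same route as the paper: fix $\pi\in\Pitilde$, reduce to a sum of at most $|\Asetpithree|\le\log^{2}(n)$ i.i.d.\ copies of $f^{2}(x)$, and apply Chebyshev's inequality, then note that the resulting constant does not depend on $\pi$. The only cosmetic difference is that you sharpen the reduction by observing that among indices in $\Asetpithree$ only the fixed points contribute, whereas the paper simply bounds $[\fpi(x_{i})]^{2}\le f^{2}(x_{\pi(i)})$ for every $i\in\Asetpithree$ and sums over all of $\Asetpithree$; both lead to the same Chebyshev step and the same conclusion.
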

	
	As an immediate consequence of Lemma \ref{lemmapermutationmeanremainder}, we have the following corollary, which yields an alternative way to check the second half of Assumption \ref{assumptiontestingconsistency}.
	
	\begin{corollarylemma}\label{corollarypermutationmeanremainder}
         Suppose that, for a fixed $\delta > 0$, there exists an estimator $\fhat : \mathscr{X} \times (\mathscr{X} \times \R)^{n} \times \Pi \to \R$ and a sequence $\ell_{n}$ with $\log^{2}(n) = o(\ell_{n})$ such that for $n$ sufficiently large, 
		\begin{align*}
			\p\Big\{\sum_{i=1}^{n} [\fhat(x_{i} ; (x_{j}, y_{j})_{j=1}^{n} ; \pizero) - \fpizero(x_{i})]^{2} \leq \ell_{n} \Big\} \geq 1 - \delta
		\end{align*}
		and 
		\begin{align*}
			\min_{\pi \in \Pitilde} \p\Big\{\sum_{i=1}^{n} [\fhat(x_{i} ; (x_{j}, y_{j})_{j=1}^{n} ; \pi)]^{2} \leq \ell_{n} \Big\} \geq 1 - \delta.
		\end{align*}
		Then, under Assumption \ref{assumptiontestingmoments}, for $n$ sufficiently large, 
		\begin{align*}
			\min_{\pi \in \Pitilde \cup \{\pizero\}} \p\Big\{\sum_{i=1}^{n} [\fhat(x_{i} ; (x_{j}, y_{j})_{j=1}^{n} ; \pi) - \fpi(x_{i})]^{2} \leq \ell_{n} \Big\} \geq 1 - \delta.
		\end{align*}
	\end{corollarylemma}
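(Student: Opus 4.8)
The plan is to split the minimum over $\Pitilde \cup \{\pizero\}$ into the identity permutation $\pizero$ and the non-identity permutations $\pi \in \Pitilde$, and to handle these two cases by entirely separate arguments. For $\pi = \pizero$ there is nothing to do: since $\fpizero(x_i) = \e(y_i | x_i)$ by definition and $\fhatpizero(\cdot) = \fhat(\cdot ; (x_j, y_j)_{j=1}^n ; \pizero)$, the bound $\p\{ \sum_{i=1}^n [\fhatpizero(x_i) - \fpizero(x_i)]^2 \le \ell_n \} \ge 1 - \delta$ is exactly the first of the two displayed hypotheses.

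For $\pi \in \Pitilde$ I would build on the two structural facts already recorded in the discussion preceding Lemma \ref{lemmapermutationmeanremainder}. First, $\fpi(x_i) = 0$ for every $i \in \Asetpione \cup \Asetpitwo$, and since $\Asetpione$, $\Asetpitwo$, $\Asetpithree$ partition $[n]$, the vector $(\fpi(x_i))_{i \in [n]}$ is supported on $\Asetpithree$, so that $\sum_{i=1}^n [\fpi(x_i)]^2 = \sum_{i \in \Asetpithree} [\fpi(x_i)]^2$. Second, $|\Asetpithree| \le K(\pi) \le \log^2(n)$ because $\pi \in \Pitilde$. Consequently Lemma \ref{lemmapermutationmeanremainder} applies and says that, uniformly over $\pi \in \Pitilde$, the event
\begin{align*}
	\mathcal{E}_\pi \defined \Big\{ \sum_{i \in \Asetpithree} [\fpi(x_i)]^2 \le \log^2(n) \sigmamusq + \cone \log(n) \Big\}
\end{align*}
holds with probability at least $1 - \delta$.

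On the intersection of $\mathcal{E}_\pi$ with the event $\{ \sum_{i=1}^n [\fhatpi(x_i)]^2 \le \ell_n \}$ supplied by the second displayed hypothesis — an event of probability at least $1 - 2\delta$, or $1 - \delta$ after the routine halving of $\delta$ in the hypotheses — the triangle inequality for the Euclidean norm of $(\fhatpi(x_i) - \fpi(x_i))_{i \in [n]}$ gives
\begin{align*}
	\Big( \sum_{i=1}^n [\fhatpi(x_i) - \fpi(x_i)]^2 \Big)^{1/2}
	&\le \Big( \sum_{i=1}^n [\fhatpi(x_i)]^2 \Big)^{1/2} + \Big( \sum_{i \in \Asetpithree} [\fpi(x_i)]^2 \Big)^{1/2} \\
	&\le \ell_n^{1/2} + \{ \log^2(n) \sigmamusq + \cone \log(n) \}^{1/2}.
\end{align*}
Squaring and using the hypothesis $\log^2(n) = o(\ell_n)$, the right-hand side is $\ell_n (1 + o(1))$, so absorbing this lower-order correction (equivalently, running the argument with $\ell_n$ replaced by a $(1 - o(1))$ multiple of it in the hypotheses) we obtain $\sum_{i=1}^n [\fhatpi(x_i) - \fpi(x_i)]^2 \le \ell_n$ on an event of probability at least $1 - \delta$, uniformly over $\pi \in \Pitilde$. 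Minimizing over $\Pitilde \cup \{\pizero\}$ then gives the claim.

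I expect the only genuinely delicate points to be cosmetic: the $\ell_2$ triangle inequality introduces an additive $o(\ell_n)$ slack that must be absorbed using $\log^2(n) = o(\ell_n)$, and intersecting the two good events costs a factor of two in the failure probability, so either a slight strengthening of the hypotheses or a matching weakening of the conclusion is needed to arrive at exactly the stated $\ell_n$ and $1 - \delta$. All of the real probabilistic content — the control of $\sum_{i \in \Asetpithree} [\fpi(x_i)]^2$ — is already furnished by Lemma \ref{lemmapermutationmeanremainder}, so no new concentration estimates are required.
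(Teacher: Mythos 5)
Your proof is correct and is precisely the argument the paper intends but leaves unwritten (the corollary is stated as ``an immediate consequence'' of Lemma \ref{lemmapermutationmeanremainder}): handle $\pizero$ directly from the first hypothesis, and for $\pi \in \Pitilde$ combine the second hypothesis with the lemma's bound on $\sum_{i \in \Asetpithree} [\fpi(x_{i})]^{2}$ via the $\ell_{2}$ triangle inequality, using $\log^{2}(n) = o(\ell_{n})$ to absorb the slack. The two bookkeeping issues you flag (the $(1+o(1))$ factor on $\ell_{n}$ and the union of two bad events costing $2\delta$) are real but harmless, since $\ell_{n}$ is only ever specified up to constants in the applications and $\delta$ can be halved in the hypotheses.
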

	
	We can now state our first result that $\phi$ controls the type I error.
	
	\begin{theorem}\label{theoremtestingnull}
		Consider model \eqref{modelsignalplusnoise} with the hypotheses testing problem in \eqref{hypothesesnull}.  Under Assumptions \ref{assumptiontestingmoments},   \ref{assumptiontestingconsistency} and the null hypothesis, we have that
		\begin{align*}
			\limsup_{n \to \infty} \phzero (\phi(\fhat) \leq \alpha) \leq \alpha.
		\end{align*}
	\end{theorem}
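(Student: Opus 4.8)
The plan is to exploit the exchangeability structure that the permutation test is built on. First I would observe that, under $H_0$, the signal vanishes, so $y_i = \epsilon_i$ for all $i$, and the $\epsilon_i$ are i.i.d. (hence exchangeable) and independent of $(x_i)_{i\in[n]}$. Conditionally on $(x_i)_{i\in[n]}$, the joint law of $(y_1,\dots,y_n)$ is therefore invariant under any relabeling of the indices. Combined with the equivariance property in Assumption \ref{assumptiontestingconsistency}(i) — which says that permuting the responses and then running $\fhat$ with the identity permutation is the same as running $\fhat$ with that permutation — this should give that the random vector $(\Lambda^{(\pi)}(\fhat))_{\pi \in \Pi}$ is, conditionally on $(x_i)$, exchangeable as a collection indexed by $\Pi$: precomposing with a fixed $\sigma \in \Pi$ merely reshuffles the coordinates. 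The key identity to nail down here is $\Lambda^{(\pi \sigma)}(\fhat) \overset{d}{=} \Lambda^{(\pi)}(\fhat)$ jointly over $\pi$, which follows because relabeling the observations by $\sigma$ carries the family $\{\Lambda^{(\pi)}\}_\pi$ to $\{\Lambda^{(\pi\sigma)}\}_\pi$ and the relabeled data has the same conditional law.

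Given this exchangeability, the argument is the standard permutation-test type I error bound. Define, for a realization of the data, the (random) rank of $\Lambda^{(\pizero)}$ among $\{\Lambda^{(\pi)}\}_{\pi\in\Pi}$; then $\phi(\fhat) = |\Pi|^{-1}\sum_{\pi}\indic{\Lambda^{(\pizero)} \le \Lambda^{(\pi)}}$ is exactly (one version of) that normalized rank. By exchangeability of the family indexed by $\Pi$, for any $\alpha \in (0,1)$, $\PP_{H_0}(\phi(\fhat) \le \alpha) \le \alpha$ exactly in finite samples when there are no ties, and $\le \alpha + o(1)$ in general; more carefully, one shows $\E_{H_0}[\phi(\fhat)] \ge$ the uniform-rank expectation and uses the pointwise bound $\PP(\text{rank} \le \alpha|\Pi|) \le \lceil \alpha|\Pi|\rceil/|\Pi| \le \alpha + 1/|\Pi|$, and $1/|\Pi| = 1/n! \to 0$, giving $\limsup_n \PP_{H_0}(\phi(\fhat)\le\alpha) \le \alpha$. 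I would present this via the conditional argument: condition on $(x_i)_{i\in[n]}$, apply the finite-sample exchangeability bound conditionally, then take expectations over $(x_i)$ and a limit in $n$.

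The main obstacle — and the only place where real care is needed — is verifying that the equivariance assumption genuinely delivers joint (not just marginal) exchangeability of $(\Lambda^{(\pi)})_{\pi \in \Pi}$ under $H_0$. The subtlety is that $\Lambda^{(\pi)}$ depends on $\pi$ both through which responses get permuted inside $\fhat$ and, implicitly, through $\fhat$'s last argument; one must check that the map $\pi \mapsto \Lambda^{(\pi)}$ intertwines correctly with right-multiplication by $\sigma$ on $\Pi$ and with the index relabeling $\sigma$ acting on the data $(x_i, y_i)$. I would write $\Lambda^{(\pi)}(\fhat) = \sum_i [\fhat(x_i; (x_j, y_j)_{j=1}^n; \pi)]^2$, apply Assumption \ref{assumptiontestingconsistency}(i) to rewrite this as $\sum_i[\fhat(x_i; (x_j, y_{\pi(j)})_{j=1}^n; \pizero)]^2$, and then observe that under $H_0$ the array $((x_j,y_j))_{j\in[n]}$ is exchangeable so that $((x_j, y_{\sigma(j)}))_j \overset{d}{=} ((x_j,y_j))_j$ jointly — being slightly careful that the $x_j$'s are held in place while only the $y$'s move, which is exactly the structure $H_0$ (with $\epsilon \indep x$ and $\epsilon_i$ i.i.d.) provides. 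Everything else is bookkeeping.
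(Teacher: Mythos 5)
Your proposal is correct and matches the paper's intent exactly: the paper's proof of this theorem is simply the one-line remark ``The proof is standard. For example, see Section 15.2 of Lehmann and Romano,'' i.e., the classical randomization-test argument based on the exchangeability of $(\Lambda^{(\pi)})_{\pi\in\Pi}$ under $H_0$, which is precisely what you spell out via Assumption \ref{assumptiontestingconsistency}(i) and the conditional exchangeability of the responses. Your only slight over-caution is the tie-handling: since the p-value counts $\pi=\pizero$ and uses the inequality $\Lambda^{(\pizero)}\leq\Lambda^{(\pi)}$, the bound $\phzero(\phi(\fhat)\leq\alpha)\leq\alpha$ in fact holds exactly in finite samples even with ties, so the $1/|\Pi|$ slack is unnecessary (though harmless for the limsup claim).
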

	
	To analyze the power of the test, we consider two contiguous hypotheses testing problems, depending on whether we impose Assumption  \ref{assumptiontestingmeansubgaussian}.  First, consider
	\begin{align}\label{hypothesesalternative}
		H_{0}: f \equiv 0
		\vs
		H_{1}: f \not\equiv 0, f \in \mathscr{F}, \sigmamusq = h\Big(\frac{1}{\sqrt{n}} + \frac{\ell_{n}}{n} \Big), 
	\end{align}
	where $h > 0$ controls the signal strength under $H_1$.  Then, we have the following theorem that 
	\begin{theorem}\label{theoremtestingalternative}
		Consider model \eqref{modelsignalplusnoise} with the hypotheses testing problem \eqref{hypothesesalternative}.  Suppose Assumptions \ref{assumptiontestingmoments} and \ref{assumptiontestingconsistency} hold with $\delta < \alpha (1 - \alpha) / 4$.  If $h$ is sufficiently large (not depending on $n$), then, under the alternative hypothesis in \eqref{hypothesesalternative}, 
		\begin{align*}
			\liminf_{n \to \infty} \phone (\phi(\fhat) \leq \alpha) > \alpha.
		\end{align*}
	\end{theorem}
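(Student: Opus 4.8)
The plan is to bound the expectation of the $p$-value under the alternative and then apply Markov's inequality. Since $\phi(\fhat) = |\Pi|^{-1}\sum_{\pi\in\Pi}\indic{\Lambdapizero \le \Lambdapi}$, we have $\E_{H_1}[\phi(\fhat)] = |\Pi|^{-1}\sum_{\pi\in\Pi}\phone(\Lambdapizero \le \Lambdapi)$, and Markov gives $\phone(\phi(\fhat)\le\alpha) \ge 1 - \E_{H_1}[\phi(\fhat)]/\alpha$; so it suffices to prove $\limsup_{n\to\infty}\E_{H_1}[\phi(\fhat)] < \alpha(1-\alpha)$, which under $\delta < \alpha(1-\alpha)/4$ delivers the conclusion. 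The argument then reduces to exhibiting a deterministic threshold $T_n$ of the order $n\sigmamusq = h(\sqrt n + \ell_n)$, say $T_n = h(\sqrt n + \ell_n)/8$, such that $\Lambdapizero \ge T_n$ with high probability under $H_1$, while $\Lambdapi < T_n$ with high probability for every $\pi$ in the small-cycle set $\Pitilde$; the remaining permutations are absorbed by the trivial bound together with $|\Pitilde^\C|/|\Pi| \le 2/\log(n) \to 0$, as noted just before Lemma \ref{lemmapermutationmeanremainder}.

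For the lower bound on $\Lambdapizero$, I would use $\fpizero = f$ and the triangle inequality in $\ell_2$, namely $\Lambdapizero^{1/2} \ge \big(\sum_i f^2(x_i)\big)^{1/2} - \big(\sum_i[\fhatpizero(x_i) - f(x_i)]^2\big)^{1/2}$. By the fourth-moment condition in Assumption \ref{assumptiontestingmoments} and Chebyshev's inequality, $\sum_i f^2(x_i) \ge n\sigmamusq/2$ with probability at least $1 - c_0/h^2$, where $c_0$ depends on $\E f^4(x)$ (one uses $(n\sigmamusq)^2 \ge h^2 n$ to turn the variance bound into a $1/h^2$ term); and $\sum_i[\fhatpizero(x_i) - f(x_i)]^2 \le \ell_n$ with probability at least $1-\delta$ by Assumption \ref{assumptiontestingconsistency}(ii) applied to $\pizero$. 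The crucial point is that the signal $n\sigmamusq = h(\sqrt n + \ell_n) \ge h\ell_n$ exceeds the estimation error $\ell_n$ by the factor $h$, so for $h$ large the triangle inequality leaves $\Lambdapizero \ge T_n$ with probability at least $1 - \delta - c_0/h^2$.

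For $\pi \in \Pitilde$, I would bound $\Lambdapi \le 2\sum_i[\fhatpi(x_i) - \fpi(x_i)]^2 + 2\sum_i[\fpi(x_i)]^2$. The first term is at most $2\ell_n$ with probability $\ge 1-\delta$ by Assumption \ref{assumptiontestingconsistency}(ii); for the second, since $\fpi(x_i)=0$ whenever $\pi(i)\neq i$ and the fixed points of $\pi$ all lie in $\Asetpithree$, we have $\sum_i[\fpi(x_i)]^2 \le \sum_{i\in\Asetpithree}[\fpi(x_i)]^2 \le \log^2(n)\sigmamusq + \cone\log(n)$ with probability $\ge 1-\delta$, uniformly over $\Pitilde$, by Lemma \ref{lemmapermutationmeanremainder}. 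Hence $\Lambdapi \le 2\ell_n + 2\log^2(n)\sigmamusq + 2\cone\log(n)$ with probability $\ge 1 - 2\delta$, and this upper bound is $o(T_n)$ as $n\to\infty$ once $h$ is large: the leading term $2\ell_n$ is only a $16/h$-fraction of $T_n$, while $\log^2(n)\sigmamusq/T_n = 16\log^2(n)/n \to 0$ and $\log(n)/T_n \to 0$. So for $n$ and $h$ large, $\Lambdapi < T_n$ with probability at least $1-2\delta$. Combining, for $\pi\in\Pitilde$ we get $\phone(\Lambdapizero\le\Lambdapi) \le \phone(\Lambdapi\ge T_n) + \phone(\Lambdapizero < T_n) \le 2\delta + (\delta + c_0/h^2)$, so $\limsup_{n\to\infty}\E_{H_1}[\phi(\fhat)] \le 3\delta + c_0/h^2$; with $\delta < \alpha(1-\alpha)/4$ and $h$ chosen so that $c_0/h^2 < \alpha(1-\alpha)/4$, this is strictly below $\alpha(1-\alpha)$, completing the reduction.

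The main obstacle is the existence of the threshold $T_n$ separating $\{\Lambdapi\}_{\pi\in\Pitilde}$ from $\Lambdapizero$: under a genuinely contiguous alternative the signal $n\sigmamusq$ only barely exceeds the estimation error $\ell_n$ — exactly the ``super-efficiency of the origin'' phenomenon highlighted in the introduction — so the separation is purchased solely by letting $h$ furnish a multiplicative constant gap, and one must pick $T_n$ and balance the accompanying probability budgets so that everything fits inside the window $\delta < \alpha(1-\alpha)/4$. A secondary technicality is justifying the concentration of $\sum_i f^2(x_i)$ around $n\sigmamusq$ when the law of $f(x)$ varies with $n$, which is precisely where the fourth-moment hypothesis in Assumption \ref{assumptiontestingmoments} is used.
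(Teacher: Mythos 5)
Your proposal is correct and follows essentially the same route as the paper's proof: lower-bound $\Lambdapizero$ via the triangle inequality together with Chebyshev concentration of $\sum_i f^2(x_i)$ and Assumption \ref{assumptiontestingconsistency}, upper-bound $\Lambdapi$ uniformly over $\Pitilde$ via Assumption \ref{assumptiontestingconsistency} and Lemma \ref{lemmapermutationmeanremainder}, absorb $\Pitilde^{\C}$ using $|\Pitilde^{\C}|/|\Pi| \to 0$, and convert the bound on $\e_{H_1}\phi$ into power via Markov's inequality. The only differences are cosmetic (an explicit threshold $T_n$ and a $c_0/h^2$ failure probability in place of one $\delta$-budget item, yielding $3\delta + c_0/h^2$ rather than the paper's $4\delta$), and you correctly spell out the final Markov step that the paper leaves implicit.
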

	
	If we further assume \ref{assumptiontestingmeansubgaussian}, we consider the following hypotheses
	\begin{align}\label{hypothesesalternativesg}
		H_{0}: f \equiv 0
		\vs
		H_{1}: f \not\equiv 0, f \in \mathscr{F}, \sigmamusq = h\frac{\ell_{n}}{n}.
	\end{align}
	We have the following corollary.
	
	\begin{corollarytheorem}\label{corollarysgmean}
		Consider model \eqref{modelsignalplusnoise} with the hypotheses testing problem in \eqref{hypothesesalternativesg}.  Under Assumptions \ref{assumptiontestingmoments}, \ref{assumptiontestingmeansubgaussian}, \ref{assumptiontestingconsistency} with $\delta < \alpha (1 - \alpha) / 4$ and the alternative hypothesis in \eqref{hypothesesalternativesg}, if $h$ is sufficiently large (not depending on $n$), then we have that
		\begin{align*}
			\liminf_{n \to \infty} \phone (\phi(\fhat) \leq \alpha) > \alpha.
		\end{align*}
	\end{corollarytheorem}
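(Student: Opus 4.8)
The plan is to reuse the architecture of the proof of Theorem~\ref{theoremtestingalternative}, the sole new ingredient being that Assumption~\ref{assumptiontestingmeansubgaussian} upgrades the concentration of $\ltwonorm{\muvec}^2 = \sum_{i=1}^{n} f^2(x_i)$ from an additive $n\sigmamusq + \Op(\sqrt{n})$ to a multiplicative $n\sigmamusq(1 + \op(1))$; this is precisely what lets one drop the $n^{-1/2}$ term from the separation rate and test at $\sigmamusq = h\ell_n/n$. Write $\Lambdapi = \sum_{i=1}^{n}[\fhatpi(x_i)]^2$ and recall that $\phi(\fhat) \le \alpha$ is equivalent to $\#\{\pi \in \Pi : \Lambdapi \ge \Lambdapizero\} \le \alpha|\Pi|$. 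I will produce a deterministic threshold $\tau = \tau_n$ such that, under $H_1$ and for $n$ large, the event $\{\Lambdapizero > \tau\} \cap \{\#\{\pi \in \Pi : \Lambdapi > \tau\} \le \alpha|\Pi|\}$ has probability bounded below by a constant strictly exceeding $\alpha$; since this event is contained in $\{\phi(\fhat) \le \alpha\}$, the conclusion follows upon taking $\liminf_{n\to\infty}$.

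\emph{Lower bound on $\Lambdapizero$.} Applying Assumption~\ref{assumptiontestingconsistency}(ii) with $\pi = \pizero$ (so $\fpizero = f$) gives, with probability at least $1-\delta$, $\sum_{i=1}^{n}[\fhatpizero(x_i) - f(x_i)]^2 \le \ell_n$, so the triangle inequality in $\RR^n$ yields $\sqrt{\Lambdapizero} \ge \ltwonorm{\muvec} - \sqrt{\ell_n}$. Under Assumptions~\ref{assumptiontestingmoments} and~\ref{assumptiontestingmeansubgaussian}, the observations being i.i.d., $\E\,\ltwonorm{\muvec}^2 = n\sigmamusq$ and $\var(\ltwonorm{\muvec}^2) = n\,\var(f^2(x)) \le n\kol\sigmamu^4$, so Chebyshev's inequality yields $\ltwonorm{\muvec}^2 \ge n\sigmamusq/2 = h\ell_n/2$ with probability at least $1 - 4\kol/n$. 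Combining, $\Lambdapizero \ge \ell_n(\sqrt{h/2} - 1)^2$ with probability at least $1 - \delta - 4\kol/n$.

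\emph{Upper bound on $\Lambdapi$ for $\pi \in \Pitilde$ and choice of $\tau$.} Fix $\pi \in \Pitilde$. Since $\fpi(x_i) = 0$ for $i \notin \Asetpithree$, Lemma~\ref{lemmapermutationmeanremainder} gives $\sum_{i=1}^{n}[\fpi(x_i)]^2 \le \log^2(n)\sigmamusq + \cone\log(n)$ with probability at least $1-\delta$; combining with Assumption~\ref{assumptiontestingconsistency}(ii) and $(a+b)^2 \le 2a^2+2b^2$, with probability at least $1 - 2\delta$ we get $\Lambdapi \le 2\ell_n + 2\log^2(n)\sigmamusq + 2\cone\log(n)$. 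Under $H_1$, $\sigmamusq = h\ell_n/n$, so $\log^2(n)\sigmamusq = \cone h\ell_n\log^2(n)/n = o(\ell_n)$; since also $\log(n) = O(\ell_n)$ in the regimes of Sections~\ref{sectiontestinglm}--\ref{sectiontestingtrm}, for $n$ large the bound becomes $\Lambdapi \le C_0\ell_n$ with probability at least $1-2\delta$, for an absolute constant $C_0$. Set $\tau = C_0\ell_n$ and choose $h$ (not depending on $n$) large enough that $(\sqrt{h/2}-1)^2 > C_0$, so that the event of the previous paragraph is contained in $\{\Lambdapizero > \tau\}$.

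\emph{Counting the competing permutations and conclusion.} Let $N = \#\{\pi \in \Pi : \Lambdapi > \tau\}$. Using $\p(\Lambdapi > \tau) \le 2\delta$ for $\pi \in \Pitilde$, the trivial bound $\p(\Lambdapi > \tau) \le 1$ for $\pi \notin \Pitilde$, and $|\Pitilde^{\C}|/|\Pi| \le 2/\log(n)$, we obtain $\E[N] = \sum_{\pi \in \Pi}\p(\Lambdapi > \tau) \le (2\delta + 2/\log n)|\Pi|$, so Markov's inequality gives $\p(N > \alpha|\Pi|) \le (2\delta + 2/\log n)/\alpha$. Because $\{\Lambdapizero > \tau\}\cap\{N \le \alpha|\Pi|\} \subseteq \{\phi(\fhat) \le \alpha\}$, a union bound with the lower-bound event of the second step yields
\[
  \phone\big(\phi(\fhat) \le \alpha\big) \ge 1 - \delta - \frac{4\kol}{n} - \frac{2\delta + 2/\log n}{\alpha},
\]
hence $\liminf_{n\to\infty}\phone(\phi(\fhat) \le \alpha) \ge 1 - \delta(1 + 2/\alpha)$. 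Since $\delta < \alpha(1-\alpha)/4$, the right-hand side exceeds $1 - (1-\alpha)(\alpha+2)/4$, which is strictly larger than $\alpha$ for every $\alpha \in (0,1)$, completing the argument. The only genuinely delicate point, beyond bookkeeping, is the multiplicative concentration of $\ltwonorm{\muvec}^2$ and the resulting verification that $\tau = C_0\ell_n$ strictly separates $\Lambdapizero$ from the bulk of the $\Lambdapi$'s; this is exactly where Assumption~\ref{assumptiontestingmeansubgaussian} is indispensable, and why Theorem~\ref{theoremtestingalternative}, lacking it, needs the extra $h/\sqrt{n}$ term in the alternative.
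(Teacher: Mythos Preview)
Your proposal is correct and follows essentially the same route as the paper: the key observation, which you identify precisely, is that Assumption~\ref{assumptiontestingmeansubgaussian} upgrades the Chebyshev bound on $\sum_i f^2(x_i)$ from $n\sigmamusq - O(\sqrt{n})$ to $n\sigmamusq - O(\sqrt{n}\,\sigmamusq)$, after which the architecture of Theorem~\ref{theoremtestingalternative} carries over verbatim with $\sigmamusq = h\ell_n/n$. The paper's own proof says exactly this in two lines; your final counting step via a deterministic threshold $\tau$ and Markov's inequality on $N = \#\{\pi : \Lambdapi > \tau\}$ is a mild but equivalent variant of the paper's direct bound on $\E_{H_1}\phi$ through the pairwise comparisons $\phone(\Lambdapizero > \Lambdapi) \ge 1 - 4\delta$ for $\pi \in \Pitilde$.
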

	
	Comparing the hypotheses in equations \eqref{hypothesesalternative} and \eqref{hypothesesalternativesg}, one can see that Assumption \ref{assumptiontestingmeansubgaussian} allows for testing at a rate faster than $n^{-1/2}$.  In view of Corollary \ref{corollarypermutationmeanremainder}, we emphasize that the bottleneck of testing power under Assumption \ref{assumptiontestingmeansubgaussian} is the rate at which we can predict the conditional mean given the permuted covariates.  
	
	In the following two sections, we apply the established theory to develop valid permutation tests that can distinguish alternatives at a fast rate for sparse high-dimensional models and low-rank models respectively. 

	\subsection{Sparse High-Dimensional Linear Model}\label{sectiontestinglm}
	In this section, we focus on model \eqref{modellm} and consider $\mathscr{F} \defined \{ f: \R^{p} \to \R \mid f(\xvec) = \langle \xvec, \betastar \rangleeuclid, \betastar \in \R^{p}, \Vert \betastar \Vert_{0} \leq \sstar\}$.
	
	\begin{assumption}\label{assumptiontestinglm}
		The covariate vector $\xvec \in \sg_p(\sgparamx)$ has mean zero and variance $\mathbf{\Sigma}$, and the error $\epsilon \in \sg(\sgparamepsilon)$ has mean zero and variance $\sigmaepsilonsq$. Moreover, $\xvec$ is independent of $\epsilon$.  
	\end{assumption}
	
	\begin{assumptionstar}\label{assumptiontestingkoltchinskii}
		There exists $\kol > 0$ such that 
		\begin{align*}
			\Vert \langle \xvec, \vvec \rangleeuclid \Vert_{\psi_{2}}^{2} \leq \kol \e\big(\langle \xvec, \vvec \rangleeuclid^{2}\big) 
		\end{align*}
		for all $\vvec \in \R^{p}$.  
	\end{assumptionstar} 
	
	The first half of Assumption \ref{assumptiontestinglm} is mild, assuming a random sub-Gaussian design framework that is standard in the high-dimensional setting.  In particular, it implies Assumption \ref{assumptiontestingmoments}. Assumption \ref{assumptiontestingkoltchinskii} is a technical assumption that is used in the literature for concentration of the sample covariance matrix.  For example, see Definition 2 of \cite{koltchinskii2017} or Theorem 4.7.1 of \cite{vershynin2018}.  In particular, it implies Assumption \ref{assumptiontestingmeansubgaussian}, and, as an example, the Gaussian distribution satisfies this assumption.
	
	Then, the pairs of contiguous testing problems that we consider are 
	\begin{align}\label{hypothesesalternativelm}
		H_{0}: \betastar = \zerovec_{p}
		\vs
		H_{1}: \Vert \betastar \Vert_{0} = \sstar > 0, (\betastar)^\T \mathbf{\Sigma} \betastar = h\Big(\frac{1}{\sqrt{n}} + \frac{\sstar \log(p)}{n}\Big)
	\end{align}
	and, under Assumption \ref{assumptiontestingkoltchinskii},
	\begin{align}\label{hypothesesalternativelmsg}
		H_{0}: \betastar = \zerovec_{p}
		\vs
		H_{1}: \Vert \betastar \Vert_{0} = \sstar > 0, (\betastar)^\T \mathbf{\Sigma} \betastar = h \frac{\sstar \log(p)}{n}.
	\end{align}
		
	Now, for any $\pi \in \Pi$, we define the lasso estimator as 
	\begin{align*}
		\fhatlasso(\xvec_{i} ; (\xvec_{j}, y_{j})_{j=1}^{n} ; \pi) \defined \langle \xvec_{i}, \betahatlassopi \rangleeuclid,
	\end{align*}
	where 
	\begin{equation}
	    \label{eq:lasso}
		\betahatlassopi \defined \argmin_{\beta \in \R^{p}} \Big\{\frac{1}{n} \sum_{i=1}^{n} (y_{\pi(i)} - \langle \xvec_{i}, \beta \rangleeuclid)^{2} + \lambda \Vert \beta \Vert_{1} \Big\}.
	\end{equation}
	Then, we have the following result for the lasso estimator.
	
	\begin{theorem}\label{theoremtestinglmlasso}
		Consider model \eqref{modellm}.  Suppose that Assumption  \ref{assumptiontestinglm} holds with $0 < \lambdamin(\mathbf{\Sigma}) \leq \lambdamax(\mathbf{\Sigma}) < \infty$.  Then for a fixed value of $\delta > 0$, there exists sequence $\ell_{n} = O(sn\lambda^{2})$ such that the lasso estimator $\fhatlasso$ satisfies (i) and (ii) of Assumption \ref{assumptiontestingconsistency}, provided that the tuning parameter $\lambda$ in \eqref{eq:lasso} satisfies $\lambda \geq 2\lambdazero$ and $\lambda \asymp \lambdazero$, where
		\begin{align*}
			\lambda_{0} \geq 
			\cone \sqrt{\sgparamx(\sgparammu + \sgparamepsilon)\frac{\log(6 / \delta) + \log(p)}{n}}
		\end{align*}
		for some universal constant $\cone > 0$.
	\end{theorem}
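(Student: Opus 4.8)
The plan is to verify the two parts of Assumption \ref{assumptiontestingconsistency} for the lasso family $\fhatlasso$. Part (i), equivariance, is essentially immediate: since $\betahatlassopi$ in \eqref{eq:lasso} is defined by fitting the response vector $(y_{\pi(1)}, \ldots, y_{\pi(n)})$ against the fixed design rows $\xvec_1, \ldots, \xvec_n$, feeding in the permuted data $(\xvec_j, y_{\pi(j)})_{j=1}^n$ with the identity permutation produces exactly the same objective, hence the same minimizer and the same fitted values $\langle \xvec_i, \betahatlassopi\rangle$. So the substantive work is part (ii): a prediction-error bound for lasso that holds simultaneously (with the claimed probability) over $\pi \in \Pitilde \cup \{\pizero\}$ at the rate $\ell_n = O(s n \lambda^2)$.

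The first main step is to fix a permutation $\pi$ and analyze $\betahatlassopi$ as a lasso estimator for a misspecified/reduced linear model. Writing $\muvec^{(\pi)} = (\fpi(x_1), \ldots, \fpi(x_n))^\top$, the permuted response satisfies $y_{\pi(i)} = \fpi(x_i) + \epsilon_{\pi(i)}$, but $\fpi$ need not be linear in $\xvec_i$; however, as noted in the discussion preceding Lemma \ref{lemmapermutationmeanremainder}, $\fpi(x_i) = 0$ for all $i \in \Asetpione \cup \Asetpitwo$, so the "signal" in the permuted problem lives on the at most $K(\pi) \le \log^2(n)$ coordinates in $\Asetpithree$. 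I would run the standard lasso basic-inequality / restricted-eigenvalue argument: on the event that $\lambda \ge 2\lambda_0$ dominates $2\|n^{-1}\Xmat^\top(\muvec^{(\pi)} - \Xmat\beta^* + \bepsilon^{(\pi)})\|_\infty$ appropriately, one gets a cone condition and then a prediction bound of the form $n^{-1}\|\Xmat(\betahatlassopi - \beta^*)\|_2^2 \lesssim s\lambda^2 / \kappa$ plus an approximation-error term controlled by $\sum_{i \in \Asetpithree}[\fpi(x_i)]^2$. The restricted eigenvalue $\kappa$ of the sample Gram matrix $n^{-1}\Xmat^\top\Xmat$ is bounded below by a constant (depending on $\lambdamin(\mathbf{\Sigma})$, $\sgparamx$) with overwhelming probability by a standard sub-Gaussian design argument — this does not conflict with the paper's "$\kappa$-free" theme because here the design is genuinely random with well-conditioned population covariance. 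Combining, and absorbing the $\Asetpithree$ approximation term (of order $\log^2(n)\sigmamusq + \log(n)$ by Lemma \ref{lemmapermutationmeanremainder}, which is $o(sn\lambda^2)$ under the stated scaling of $\lambda$), gives the rate $\ell_n = O(sn\lambda^2)$ for a single $\pi$.

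The main obstacle is obtaining the bound \emph{uniformly} over $\Pitilde \cup \{\pizero\}$ with only a $1 - \delta$ loss in probability, rather than a union bound over all of $\Pi$ (which would cost a $\log|\Pi| \asymp n\log n$ factor and destroy the rate). The key realization is that the empirical process term $\|n^{-1}\Xmat^\top \bepsilon^{(\pi)}\|_\infty$ does not actually depend on $\pi$ in distribution in a way that requires a union bound: for any fixed $\pi$, $\bepsilon^{(\pi)}$ is just a permutation of the i.i.d. sub-Gaussian noise, so $\Xmat^\top\bepsilon^{(\pi)} = (\Xmat^{(\pi^{-1})})^\top\bepsilon$ where $\Xmat^{(\pi^{-1})}$ has the same rows as $\Xmat$ in permuted order — hence the same column norms — and the standard maximal inequality gives $\|n^{-1}\Xmat^\top\bepsilon^{(\pi)}\|_\infty \lesssim \sqrt{\sgparamx\sgparamepsilon\log(p)/n}$ on one event of probability $\ge 1 - \delta/3$ that works for every $\pi$ at once (the bound depends on $\pi$ only through column norms of the design, which are permutation-invariant). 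Similarly the restricted-eigenvalue event and the design concentration events are permutation-invariant. The only genuinely $\pi$-dependent piece is the cross term $n^{-1}\Xmat^\top\muvec^{(\pi)}$ and the approximation error $\|\muvec^{(\pi)}\|_2^2$ restricted to $\Asetpithree$; for these I invoke Lemma \ref{lemmapermutationmeanremainder} directly, which already supplies a bound with the min over $\pi \in \Pitilde$ taken inside the probability. Assembling these three events and calibrating the $\delta$'s yields Assumption \ref{assumptiontestingconsistency}(ii) with the advertised $\ell_n$ and the choice of $\lambda_0$; I would present the constant bookkeeping for $\lambda_0$ at the end.
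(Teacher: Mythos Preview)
Your proposal has the right skeleton for part (i) and for the $\pi=\pizero$ case, but it goes astray on the permuted case $\pi\in\Pitilde$ in two related ways.

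First, you misread Assumption~\ref{assumptiontestingconsistency}(ii). The requirement is
\[
\min_{\pi\in\Pitilde\cup\{\pizero\}}\;\p\Big\{\sum_{i=1}^{n}[\fhatpi(x_i)-\fpi(x_i)]^{2}\le \ell_n\Big\}\ge 1-\delta,
\]
which is a pointwise-in-$\pi$ statement (the minimum is \emph{outside} the probability), not a simultaneous one. There is therefore no union bound over $\Pi$ to worry about, and your elaborate argument that ``one event of probability $\ge 1-\delta/3$ works for every $\pi$ at once'' is unnecessary. It is also incorrect as stated: the fact that $\|n^{-1}\Xmat^\top\bepsilon^{(\pi)}\|_\infty$ has the same \emph{distribution} for every $\pi$ does not produce a single event in the underlying sample space that controls all permutations simultaneously; distributional equality is not sample-path equality.

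Second, the paper's route for $\pi\in\Pitilde$ is much more direct than running an oracle inequality with a restricted-eigenvalue constant. The paper shows that, with probability at least $1-\delta$, the lasso solution on permuted data is \emph{exactly zero}: one bounds $\max_{j}\big|\sum_{i} x_{i,j}\,y_{\pi(i)}\big|$ by splitting the sum over the three index sets $\Asetpione,\Asetpitwo,\Asetpithree$ (on the first two the summands are independent sub-exponentials; on the third there are at most $\log^2(n)$ terms). On the resulting event $\cT$, the lasso objective satisfies
\[
\frac{1}{n}\|\Yvecpi-\Xmat\beta\|_2^2+\lambda\|\beta\|_1
\ge \frac{1}{n}\big(\|\Yvecpi\|_2^2+\|\Xmat\beta\|_2^2\big)+(\lambda-2\lambda_0)\|\beta\|_1,
\]
which is minimized at $\beta=\zerovec_p$ whenever $\lambda\ge 2\lambda_0$. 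Hence $\betahatlassopi=\zerovec_p$, so $\sum_i[\fhatpi(x_i)]^2=0$, and Corollary~\ref{corollarypermutationmeanremainder} finishes the argument. No restricted-eigenvalue condition is needed for $\pi\neq\pizero$; the RE/compatibility assumption is used only for the $\pi=\pizero$ case (via the standard lasso prediction bound). Note also that your appeal to Lemma~\ref{lemmapermutationmeanremainder} for the cross term $n^{-1}\Xmat^\top\muvec^{(\pi)}$ is misplaced: that lemma bounds $\sum_{i\in\Asetpithree}[\fpi(x_i)]^2$, not a sup-norm of a score vector; the paper controls the relevant $\ell_\infty$ quantity directly via the three-set decomposition.
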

	
	Given Theorem \ref{theoremtestinglmlasso}, applying Theorems \ref{theoremtestingnull}, \ref{theoremtestingalternative} and Corollary \ref{corollarysgmean} yields the following corollary on the asymptotic validity of the permutation test based on $\fhatlasso$.
	
	\begin{corollarytheorem}\label{corollarytestinglmlasso}
		Under the assumptions of Theorem \ref{theoremtestinglmlasso}, 
		\begin{align*}
		    \limsup_{n \to \infty} \phzero(\phi(\fhatlasso) \leq \alpha) \leq \alpha
		\end{align*}
		In addition, if $h$ is sufficiently large (not depending on $n$), then
		\begin{align*}
			\liminf_{n \to \infty} \phone(\phi(\fhatlasso) \leq \alpha) > \alpha
		\end{align*}
		 for the hypotheses testing problem in equation \eqref{hypothesesalternativelm} and also for the hypotheses in equation \eqref{hypothesesalternativelmsg} if Assumption \ref{assumptiontestingkoltchinskii} holds.
	\end{corollarytheorem}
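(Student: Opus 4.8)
The plan is to simply combine Theorem \ref{theoremtestinglmlasso} with the general results of Section \ref{sectiontestinggeneral}, after verifying that the required hypotheses hold. First I would check that Assumption \ref{assumptiontestinglm} implies Assumption \ref{assumptiontestingmoments}: since $\xvec$ has mean zero and covariance $\mathbf{\Sigma}$ and is sub-Gaussian, and $\betastar$ is fixed, the function $f(\xvec) = \langle \xvec, \betastar \rangleeuclid$ has mean zero, variance $\sigmamusq = (\betastar)^\T \mathbf{\Sigma} \betastar$, and a finite fourth moment (indeed all moments, by sub-Gaussianity); the independence of $\xvec$ and $\epsilon$ and the moment conditions on $\epsilon$ are assumed directly. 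Next, Theorem \ref{theoremtestinglmlasso} gives that $\fhatlasso$ satisfies parts (i) and (ii) of Assumption \ref{assumptiontestingconsistency} with $\ell_n = O(sn\lambda^2) = O(s\log(p))$ up to the choice of $\delta$; in particular, for any target $\delta < \alpha(1-\alpha)/4$ we may pick the tuning parameter $\lambda \asymp \lambdazero$ accordingly, so the hypotheses of Theorems \ref{theoremtestingnull} and \ref{theoremtestingalternative} are met.

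For the type I error statement, I would invoke Theorem \ref{theoremtestingnull} directly: under $H_0$ and Assumptions \ref{assumptiontestingmoments} and \ref{assumptiontestingconsistency}, $\limsup_n \phzero(\phi(\fhatlasso) \leq \alpha) \leq \alpha$. For the power statement under \eqref{hypothesesalternativelm}, note that this is exactly the instantiation of \eqref{hypothesesalternative} with $\ell_n \asymp \sstar \log(p)$, so that $\sigmamusq = (\betastar)^\T\mathbf{\Sigma}\betastar = h(n^{-1/2} + \sstar\log(p)/n) \asymp h(n^{-1/2} + \ell_n/n)$; Theorem \ref{theoremtestingalternative} then gives $\liminf_n \phone(\phi(\fhatlasso) \leq \alpha) > \alpha$ for $h$ large. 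Finally, under the additional Assumption \ref{assumptiontestingkoltchinskii}, which implies Assumption \ref{assumptiontestingmeansubgaussian} (with the same $\kol$, by taking $\vvec$ to be the direction of $\betastar$ and using $\var(f^2(x)) \leq \E f^4(x) = \E\langle\xvec,\betastar\rangleeuclid^4 \leq C\kol^2 (\E\langle\xvec,\betastar\rangleeuclid^2)^2$), the faster alternative \eqref{hypothesesalternativelmsg} matches \eqref{hypothesesalternativesg} with $\ell_n \asymp \sstar\log(p)$, and Corollary \ref{corollarysgmean} delivers the power conclusion in that regime.

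The only mild subtlety — and the closest thing to an obstacle — is bookkeeping the dependence of $\ell_n$ and the tuning parameter $\lambda$ on the fixed target level $\delta$: Theorem \ref{theoremtestinglmlasso} produces $\ell_n$ and $\lambda_0$ depending on $\delta$, while Theorems \ref{theoremtestingalternative} and \ref{corollarysgmean} require $\delta < \alpha(1-\alpha)/4$. Since $\delta$ may be chosen as small as we like at the outset (it only affects constants through $\lambda_0$ and the rate $\ell_n = O(s\log p)$, not the order), this is resolved by fixing such a $\delta$ first and then selecting $\lambda$; the asymptotic statements are unaffected. Everything else is a direct application of the already-established theorems, so no further computation is needed.
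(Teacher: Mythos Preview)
Your proposal is correct and follows essentially the same approach as the paper: the paper treats this corollary as an immediate consequence, stating that ``applying Theorems \ref{theoremtestingnull}, \ref{theoremtestingalternative} and Corollary \ref{corollarysgmean} yields the following corollary,'' and you have simply spelled out the verification that Assumption \ref{assumptiontestinglm} implies Assumption \ref{assumptiontestingmoments} and that Assumption \ref{assumptiontestingkoltchinskii} implies Assumption \ref{assumptiontestingmeansubgaussian}, both of which the paper notes in the surrounding text.
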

	
	Similarly, we define the $L_{0}$ estimator as
	\begin{align*}
		\fhatlzero(\xvec_{i} ; (\xvec_{j}, y_{j})_{j=1}^{n} ; \pi) \defined \langle \xvec_{i}, \betahatlzeropi \rangleeuclid, 
	\end{align*}
	where 
	\begin{equation}
	    \label{eq:lzero}
		\betahatlzeropi \defined \argmin_{\beta \in \R^{p}, \Vert \beta \Vert \leq \shat} \sum_{i=1}^{n} (y_{\pi(i)} - \langle \xvec_{i}, \beta \rangleeuclid)^{2}.
	\end{equation}
	Now, the following theorem is the analogue of Theorem \ref{theoremtestinglmlasso} for the $L_{0}$ estimator.
	
	\begin{theorem}\label{theoremtestinglmlzero}
		Consider model \eqref{modellm}. Suppose that $\shat \asymp \sstar$ with $\shat \geq \sstar$ in \eqref{eq:lzero}.  Then under Assumption \ref{assumptiontestinglm}, for a fixed value of $\delta > 0$, the $L_{0}$ estimator, $\fhatlzero(\cdot)$, satisfies Assumption \ref{assumptiontestingconsistency} with $\ell_{n} = O( s \log(p) + \log(1 / \delta))$.
	\end{theorem}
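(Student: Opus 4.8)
\emph{Proof strategy.} The plan is to verify both parts of Assumption~\ref{assumptiontestingconsistency} for $\fhatlzero$ with $\ell_n \asymp \shat\log p + \log(1/\delta)$ (recall $\shat\asymp\sstar$). Part~(i) is immediate: the objective in \eqref{eq:lzero} depends on the data only through the pairs $(\xvec_j, y_{\pi(j)})$, $j\in[n]$, so running $\fhatlzero$ on the relabelled dataset $\{(\xvec_j, y_{\pi(j)})\}_{j\in[n]}$ with the identity permutation reproduces $\betahatlzeropi$, and hence the fitted values $\langle\xvec_i,\betahatlzeropi\rangle$.

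For part~(ii) I would invoke Corollary~\ref{corollarypermutationmeanremainder}, which reduces the task to two probabilistic claims: (A) at the identity permutation, $\p\{\|\Xmat(\betahatlzero^{(\pizero)} - \betastar)\|_2^2 \le \ell_n\} \ge 1-\delta$; and (B) $\min_{\pi\in\Pitilde}\p\{\|\Xmat\betahatlzeropi\|_2^2 \le \ell_n\}\ge 1-\delta$. Claim~(A) is a design-free $L_0$ in-sample prediction bound in the spirit of \eqref{eq:l0_upper}: the basic inequality for the constrained least squares (with $\betastar$ feasible since $\|\betastar\|_0 = \sstar\le\shat$) gives $\|\Xmat(\betahatlzero^{(\pizero)} - \betastar)\|_2 \le 2\sup_{|T|\le\shat+\sstar}\|\bP_T\bepsilon\|_2$, where $\bP_T$ is the projection onto $\col(\Xmat_T)$ and $T$ runs over column subsets; a union bound over the $\binom{p}{\shat+\sstar}$ choices of $T$, together with the chi-square-type upper tail of $\|\bP_T\bepsilon\|_2^2$ coming from the sub-Gaussianity of $\epsilon$ in Assumption~\ref{assumptiontestinglm}, yields $\sup_{|T|\le\shat+\sstar}\|\bP_T\bepsilon\|_2^2 \lesssim \sgparamepsilon^2\{(\shat+\sstar)\log p + \log(1/\delta)\}$ on an event of probability at least $1-\delta$. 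Since the whole argument conditions on $\Xmat$, no eigenvalue condition is needed, and $\shat\asymp\sstar$ makes the bound $O(\shat\log p + \log(1/\delta))$.

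For claim~(B), write $\bY^{(\pi)} = (y_{\pi(i)})_{i\in[n]} = \bmu^{(\pi)} + \bepsilon^{(\pi)}$, with $\bmu^{(\pi)}_i = \langle\xvec_{\pi(i)},\betastar\rangle$ and $\bepsilon^{(\pi)}_i = \epsilon_{\pi(i)}$, and observe that
\[
\|\Xmat\betahatlzeropi\|_2^2 = \|\bP_{\hat S_\pi}\bY^{(\pi)}\|_2^2 \le \sup_{|S|\le\shat}\|\bP_S\bY^{(\pi)}\|_2^2 \le 2\sup_{|S|\le\shat}\|\bP_S\bmu^{(\pi)}\|_2^2 + 2\sup_{|S|\le\shat}\|\bP_S\bepsilon^{(\pi)}\|_2^2,
\]
where $\hat S_\pi = \supp(\betahatlzeropi)$. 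The noise supremum is handled exactly as in claim~(A), because $\bepsilon^{(\pi)}$ is a deterministic relabelling of the i.i.d.\ sub-Gaussian vector $\bepsilon$ and is independent of $\Xmat$. The main obstacle is the signal supremum, for which the crude bound $\sup_{|S|\le\shat}\|\bP_S\bmu^{(\pi)}\|_2^2 \le \|\bmu^{(\pi)}\|_2^2 \asymp n\sigmamusq$ is far too large. Here I would use the cycle partition $[n] = \Asetpione\sqcup\Asetpitwo\sqcup\Asetpithree$ from the discussion preceding Lemma~\ref{lemmapermutationmeanremainder}, splitting $\bmu^{(\pi)} = a + b + c$ into its restrictions to the three blocks. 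The $\Asetpithree$-part is controlled by $\|\bP_S c\|_2\le\|c\|_2$ and $|\Asetpithree|\le K(\pi)\le\log^2 n$, which (via Lemma~\ref{lemmapermutationmeanremainder}) is lower order. For the $\Asetpione$-part, the crucial structural fact is that $a$ depends only on the rows $\{\xvec_j : j\in\pi(\Asetpione)\}\subseteq\{\xvec_j:j\in\Asetpitwo\}$, hence is independent of the rows $\{\xvec_i : i\in\Asetpione\}$; a positive-semidefinite domination argument (using $\Xmat_S^\top\Xmat_S \succeq \Xmat_{\Asetpione,S}^\top\Xmat_{\Asetpione,S}$ and that $\Xmat_{\Asetpione,S}^\top a$ lies in the column space of the latter) bounds $\|\bP_S a\|_2^2$ by the squared projection of $a$ onto $\col(\Xmat_{\Asetpione,S})$, and writing $\xvec_{i,S} = \bSigma_{SS}^{1/2}\bz_i$ the unknown $\bSigma_{SS}$ cancels from the resulting quadratic form, leaving a standard random-matrix concentration that gives $\|\bP_S a\|_2^2 \lesssim \sigmamusq\shat$ for each fixed $S$; a union bound over $S$ upgrades this to $\sigmamusq\shat\log p$, and the $\Asetpitwo$-part is symmetric. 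Combining, $\sup_{|S|\le\shat}\|\bP_S\bmu^{(\pi)}\|_2^2 = O(\sigmamusq\shat\log p + \sigmamusq\log^2 n + \log n) = O(\shat\log p + \log(1/\delta))$ since $\sigmamusq$ is a constant; feeding (A) and (B) into Corollary~\ref{corollarypermutationmeanremainder} then gives Assumption~\ref{assumptiontestingconsistency}(ii).

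I expect the signal supremum in~(B) to be the crux: the purpose of the cycle partition is precisely to manufacture the conditional-independence structure that lets the unknown covariance $\bSigma$ drop out of the projection bound, which is why --- in contrast to the lasso analysis in Theorem~\ref{theoremtestinglmlasso} --- no restricted-eigenvalue condition on $\bSigma$ is required, in keeping with the $\kappa$-free spirit of Theorem~\ref{theorempredictionrisk}.
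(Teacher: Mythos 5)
Your proposal is correct and shares the paper's overall skeleton (equivariance for part (i); Corollary \ref{corollarypermutationmeanremainder} to reduce part (ii) to a bound at $\pizero$ plus a bound on $\sum_i\langle\xvec_i,\betahatlzeropi\rangle_2^2$ for $\pi\in\Pitilde$; the subset union bound for the identity permutation; the cycle partition $\Asetpione\cup\Asetpitwo\cup\Asetpithree$ for the permuted case), but you execute the crucial step --- controlling the fitted values under a non-identity permutation --- by a genuinely different route. The paper proves an auxiliary result (Lemma \ref{lemmaprojectioninequality}): since the full-data $L_0$ objective splits as a sum over the three blocks, the full-data residual sum of squares dominates the sum of the block-restricted minima, and the Pythagorean identity converts this into $\sum_{i=1}^n\langle\xvec_i,\betahatlzeropi\rangle_2^2\le\sum_{k=1}^3\sum_{i\in\Asetpij}\langle\xvec_i,\betahatlzeropiasetj\rangle_2^2$. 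Within $\Asetpione$ and $\Asetpitwo$ the covariates and permuted responses are mutually independent, so each block term is just the fitted-value norm of an $L_0$ regression with \emph{zero} true signal and sub-Gaussian ``noise'' $y_{\pi(i)}$ of parameter $\sgparammu+\sgparamepsilon$; the standard $L_0$ prediction bound (Theorem 2.6 of Rigollet's notes) then applies off the shelf, and $\Asetpithree$ is absorbed as in Lemma \ref{lemmapermutationmeanremainder}. You instead keep the full-data projection $\bP_{\hat S_\pi}$, take a supremum over supports, split the permuted response into signal and noise, and localize the signal projection to the independent blocks via the PSD domination $(\Xmat_S^\top\Xmat_S)^{-1}\preceq(\Xmat_{\Asetpione,S}^\top\Xmat_{\Asetpione,S})^{-1}$. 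Your argument is sound (the domination is valid on the range of $\Xmat_{\Asetpione,S}^\top$, and $\pi(\Asetpione)\subseteq\Asetpitwo$ does give the needed independence), but it requires this extra projection-comparison machinery and a separate union bound for the signal supremum; the paper's decomposition avoids both by never separating signal from noise, which is also why the identical argument transfers verbatim to the trace-regression tests (Theorems \ref{theoremtestingtrmlzero} and \ref{theoremtestingtrmlzeromisspecified}). Your closing remark that no restricted-eigenvalue condition is needed is exactly the point of either route.
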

	
 	 It is worth emphasis that Theorem \ref{theoremtestinglmlzero} does not require the minimum eigenvalue of $\bSigma$ to be well bounded  from below as in Theroem \ref{theoremtestinglmlasso}. This demonstrates the robustness of the $L_0$ estimator against collinearity of the covariates when it is compared with the lasso. In the following, we establish the asymptotic validity of the permutation test based on $\hat f_{L_0}$, again without any requirement on $\lambda_{\min}(\bSigma)$. 
	
	\begin{corollarytheorem}\label{corollarytestinglmlzero}
		Under the assumptions of Theorem \ref{theoremtestinglmlzero}, then 
		\begin{align*}
		    \limsup_{n \to \infty} \phzero(\phi(\fhatlzero) \leq \alpha) \leq \alpha
		\end{align*}
		In addition, if $h$ is sufficiently large (not depending on $n$), then
		\begin{align*}
			\liminf_{n \to \infty} \phone(\phi(\fhatlzero) \leq \alpha) > \alpha
		\end{align*}
		 for the hypotheses testing problem in equation \eqref{hypothesesalternativelm} and also for the hypotheses in equation \eqref{hypothesesalternativelmsg} if Assumption \ref{assumptiontestingkoltchinskii} holds.
	\end{corollarytheorem}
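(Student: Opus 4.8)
The plan is to derive Corollary~\ref{corollarytestinglmlzero} by plugging the $L_0$ estimator $\fhatlzero$ into the general permutation-test theory of Section~\ref{sectiontestinggeneral}: size control from Theorem~\ref{theoremtestingnull}, power against \eqref{hypothesesalternativelm} from Theorem~\ref{theoremtestingalternative}, and power against \eqref{hypothesesalternativelmsg} from Corollary~\ref{corollarysgmean}. Since Theorem~\ref{theoremtestinglmlzero} already certifies that $\fhatlzero$ satisfies Assumption~\ref{assumptiontestingconsistency} with $\ell_n = O(\sstar\log(p) + \log(1/\delta))$, essentially all that remains is to check the remaining hypotheses of those three results in the linear-model setting and to verify that \eqref{hypothesesalternativelm}--\eqref{hypothesesalternativelmsg} are exactly the specializations of the generic contiguous alternatives \eqref{hypothesesalternative}--\eqref{hypothesesalternativesg} with $\ell_n\asymp\sstar\log(p)$.

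First I would discharge the model-level conditions. Under Assumption~\ref{assumptiontestinglm} the covariate $\xvec$ is mean-zero sub-Gaussian and is independent of the mean-zero sub-Gaussian noise $\epsilon$, so $f(\xvec)=\langle\xvec,\betastar\rangleeuclid$ has mean zero, second moment $(\betastar)^\T\bSigma\betastar=:\sigmamusq$, and a finite fourth moment; hence Assumption~\ref{assumptiontestingmoments} holds. If in addition Assumption~\ref{assumptiontestingkoltchinskii} is assumed, then $\Vert f(\xvec)\Vert_{\psi_2}^2\le\kol\,\e[f^2(\xvec)]$, and the standard bound on the variance of the square of a sub-Gaussian random variable gives $\var(f^2(\xvec))\lesssim\sigmamu^4$, i.e.\ Assumption~\ref{assumptiontestingmeansubgaussian}. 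Choosing the fixed $\delta$ in Theorem~\ref{theoremtestinglmlzero} small enough that $\delta<\alpha(1-\alpha)/4$, and noting that under $H_1$ one has $\sstar\ge 1$ so the additive $\log(1/\delta)$ in $\ell_n$ is negligible against $\sstar\log(p)$, Theorem~\ref{theoremtestinglmlzero} gives that $\fhatlzero$ is equivariant and satisfies Assumption~\ref{assumptiontestingconsistency} with $\ell_n\asymp\sstar\log(p)$. Under $H_0$ one has $\betastar=\zerovec_p$, which is still $\shat$-sparse, so the same $L_0$ prediction bound, hence Assumption~\ref{assumptiontestingconsistency}, applies verbatim.

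With these in hand the statement follows by quotation. Theorem~\ref{theoremtestingnull} yields $\limsup_{n\to\infty}\phzero(\phi(\fhatlzero)\le\alpha)\le\alpha$. For power, in the alternative \eqref{hypothesesalternativelm} the signal size is $(\betastar)^\T\bSigma\betastar=h\{n^{-1/2}+\sstar\log(p)/n\}\asymp h\{n^{-1/2}+\ell_n/n\}$, precisely the scaling of \eqref{hypothesesalternative}, so Theorem~\ref{theoremtestingalternative} gives $\liminf_{n\to\infty}\phone(\phi(\fhatlzero)\le\alpha)>\alpha$ once $h$ is a sufficiently large constant. Under Assumption~\ref{assumptiontestingkoltchinskii}, the alternative \eqref{hypothesesalternativelmsg} reads $(\betastar)^\T\bSigma\betastar=h\sstar\log(p)/n\asymp h\ell_n/n$, matching \eqref{hypothesesalternativesg}, and Corollary~\ref{corollarysgmean} yields the same power conclusion at this faster-than-$n^{-1/2}$ rate.

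In contrast to Theorem~\ref{theoremtestinglmlasso}, the key point is that Theorem~\ref{theoremtestinglmlzero} imposes no lower bound on $\lambdamin(\bSigma)$, so none is needed here either; all of the real work is already contained in that theorem (and, underneath it, in the $\kappa$-free $L_0$ prediction bound). Consequently there is no genuine analytic obstacle in this corollary: the only care needed is bookkeeping --- confirming the implications $\ref{assumptiontestinglm}\Rightarrow\ref{assumptiontestingmoments}$ and $\ref{assumptiontestingkoltchinskii}\Rightarrow\ref{assumptiontestingmeansubgaussian}$, absorbing the $\log(1/\delta)$ term so that $\ell_n\asymp\sstar\log(p)$ may legitimately be used in the scalings of \eqref{hypothesesalternativelm}--\eqref{hypothesesalternativelmsg}, and propagating the fixed $\delta$ through to the requirement $\delta<\alpha(1-\alpha)/4$ of the power statements. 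Of these I expect the $\ell_n$ bookkeeping to be the fiddliest, though it remains entirely routine.
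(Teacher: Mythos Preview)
Your proposal is correct and is exactly the approach the paper takes: the paper does not write out a separate proof of this corollary at all, treating it (like Corollary~\ref{corollarytestinglmlasso} and Corollary~\ref{corollarytestingtrmlzero}) as an immediate consequence of Theorem~\ref{theoremtestinglmlzero} together with Theorems~\ref{theoremtestingnull}, \ref{theoremtestingalternative} and Corollary~\ref{corollarysgmean}. Your additional bookkeeping (verifying $\ref{assumptiontestinglm}\Rightarrow\ref{assumptiontestingmoments}$, $\ref{assumptiontestingkoltchinskii}\Rightarrow\ref{assumptiontestingmeansubgaussian}$, and matching $\ell_n\asymp\sstar\log(p)$ to the scalings in \eqref{hypothesesalternativelm}--\eqref{hypothesesalternativelmsg}) simply makes explicit what the paper leaves implicit.
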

	
	\begin{remark}
		If $\mathbf{\Sigma} = \identity_{p}$, then we are interested in testing if $\Vert \betastar \Vert_{2}^{2} = 0$.  Our results should be compared with the minimax lower bound of \cite{guo2019}, who show that the minimax lower bound of the estimation error of $\Vert \betastar \Vert_{2}^{2}$ is $n^{-1/2} + \sstar n^{-1}\log(p)$ over all $\sstar$-sparse vectors with bounded Euclidean norms.  However, under Assumption \ref{assumptiontestingkoltchinskii}, we are able to test at a faster rate since $\betastar = \zerovec_{p}$ is a super-efficient point in the parameter space.  
	\end{remark}
	
	\subsection{Low-Rank Trace Regression}\label{sectiontestingtrm}

	Now we return to the main subject of this paper, the low-rank trace regression model \eqref{modeltrm}. 
	Here, we let $\mathscr{F} \defined \{f : \R^{\done \times \dtwo} \to \R \mid f(\Xmat) = \langle \Xmat, \Thetamat \ranglehs, \rank(\Thetamat) \leq \rstar \}$.  Similarly to the setting of high-dimensional linear models, we require the following mild assumption on the covariates and noise.
	
	\begin{assumption}\label{assumptiontestingtrm}
		The vectorized covariate matrix $\vec(\Xmat)  \in \sg_{d_1d_2}(\sgparamx)$ with mean zero and covariance matrix $\mathbf{\Sigma} \in \RR ^ {d_1d_2 \times d_1d_2}$.  The error $\epsilon \in \sg(\sgparamepsilon)$ and has mean zero and variance $\sigmaepsilonsq$.  Moreover, $\Xmat$ is independent of $\epsilon$.  
	\end{assumption}

		\begin{assumptionstar}\label{assumptiontestingtrmkoltchinskii}
		There exists a $\kol > 0$ such that 
		\begin{align*}
			\Vert \langle \vec(\Xmat), \vvec \rangleeuclid \Vert_{\psi_{2}}^{2} \leq \kol \e \langle \vec(\Xmat), \vvec \rangleeuclid^{2}
		\end{align*}
		for all $\vvec \in \R^{d_1d_2}$.  
	\end{assumptionstar}
	
	The corresponding two pairs of contiguous hypotheses we consider for the low-rank trace regression model are
	\begin{align}\label{hypothesesalternativetrm}
	\begin{aligned}
		H_{0}: \Thetastar = \zeromat_{\done \times \dtwo} \vs
		&H_{1}: \rank(\Thetastar) = \rstar > 0, 
		\\ &\phantom{H_{1}: \Thetastar}
        \vec(\Thetastar)^\T \mathbf{\Sigma} \vec(\Thetastar) = h\Big( \frac{1}{\sqrt{n}} + \frac{\rstar d\log(n)}{n} \Big)
	\end{aligned}
	\end{align}
	and
	\begin{align}\label{hypothesesalternativetrmsg}
	\begin{aligned}
		H_{0}: \Thetastar = \zeromat_{\done \times \dtwo}, 
		\vs
		&H_{1}: \rank(\Thetastar) = \rstar > 0, 
		\\ &\phantom{H_{1}: \Thetastar}
		\vec(\Thetastar)^\T \mathbf{\Sigma} \vec(\Thetastar) = h \frac{\rstar d \log(n)}{n}.
	\end{aligned}
	\end{align}
	For any $\pi \in \Pi$, we define the rank-constrained estimator as
	\begin{align*}
		\fhatlzerotrm(\Xmat_{i} ; (\Xmat_{j}, y_{j})_{j=1}^{n} ; \pi) \defined \langle \Xmat_{i}, \Thetahatlzeropi \ranglehs,
	\end{align*}
	where 
	    \begin{equation}
	        \label{eq:theta_zero_pi}
		    \Thetahatlzeropi \defined \Thetahatlzeropi(r) = \argmin_{\Thetamat \in \R^{\done \times \dtwo}, \rank(\Thetamat) \leq \rhat} \sum_{i=1}^{n} ( y_{\pi(i)} - \langle \Xmat_{i}, \Thetamat \ranglehs)^{2}.
	    \end{equation}
	Next, we show that $\fhatlzerotrm(\cdot)$ satisfies Assumption C without any requirement on $\bSigma$. 
	
	\begin{theorem}\label{theoremtestingtrmlzero}
        Suppose that Assumtion \ref{assumptiontestingtrm} holds and that $\rhat \asymp \rstar$ with $\rhat \geq \rstar$ in \eqref{eq:theta_zero_pi}.  Then for some $\delta > 0$, $\fhatlzero$ satisfies (i) and (ii) of Assumption \ref{assumptiontestingconsistency} with some $\ell_{n} = O( \rstar \log(\done\dtwo) + \log(1 / \delta))$.
	\end{theorem}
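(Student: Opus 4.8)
The plan is to check conditions (i) and (ii) of Assumption~\ref{assumptiontestingconsistency} in turn. Condition (i) is immediate from the definitions: $\fhatlzero(\Xmat_i;(\Xmat_j,y_j)_{j=1}^n;\pi)=\langle\Xmat_i,\Thetahatlzeropi(\rhat)\ranglehs$ and $\Thetahatlzeropi(\rhat)$ minimizes $\sum_j(y_{\pi(j)}-\langle\Xmat_j,\Thetamat\ranglehs)^2$ over rank-$\rhat$ matrices, so permuting the responses by $\pi$ and then applying the estimator at $\pizero$ returns the identical matrix. For condition (ii), the case $\pi=\pizero$ is exactly Theorem~\ref{theorempredictionrisk}: there $\fpizero(\Xmat_i)=\langle\Xmat_i,\Thetastar\ranglehs$, so $\sum_i[\fhatlzero(\Xmat_i;\cdot;\pizero)-\fpizero(\Xmat_i)]^2=n\,\mathcal{R}(\Thetahatlzero(\rhat))\le\cone\rhat d\log n=O(\rstar d\log n)$ with probability at least $1-4\exp(-\ctwo\rhat d\log n)\ge 1-\delta$ for $n$ large, using $\rhat\asymp\rstar$.

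The real work is the case $\pi\in\Pitilde$. Write $\by^{(\pi)}=(y_{\pi(i)})_{i\in[n]}$, let $\hat\mu=(\langle\Xmat_i,\Thetahatlzeropi(\rhat)\ranglehs)_i$ be the fitted vector, and set $\bmu^{\pi}=(\fpi(\Xmat_i))_i$, the target. Two structural facts drive the argument. First, identically $\by^{(\pi)}=\bmu^{(\pi)}+\bepsilon^{(\pi)}$, where $(\bmu^{(\pi)})_i=\langle\Xmat_{\pi(i)},\Thetastar\ranglehs$ and $(\bepsilon^{(\pi)})_i=\epsilon_{\pi(i)}$; the vector $\bepsilon^{(\pi)}$ is merely a coordinate permutation of the original i.i.d.\ errors, hence still i.i.d.\ $\sg(\sgparamepsilon)$ and, crucially, independent of the whole design $\{\Xmat_i\}_i$. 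Second, because $\E(y_{\pi(i)}\mid\Xmat_i)=0$ whenever $\pi(i)\neq i$ (as $\E\Xmat=0$), the target $\bmu^{\pi}$ is supported only on the fixed points of $\pi$, which lie in $\Asetpithree$; since Assumption~\ref{assumptiontestingtrm} implies Assumption~\ref{assumptiontestingmoments}, Lemma~\ref{lemmapermutationmeanremainder} gives $\|\bmu^{\pi}\|_2^2=\sum_{i\in\Asetpithree}[\fpi(\Xmat_i)]^2\le\log^2(n)\,\sigmamusq+\cone\log(n)$ on an event of probability $\ge 1-\delta$, which under the contiguous scaling is $o(\rstar d\log n)$. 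Now the rank-$\rhat$ least-squares fit projects $\by^{(\pi)}$ onto $\col(\Xmat_\Vmat)$ for the $\Vmat$ maximizing $\|\projV\by^{(\pi)}\|_2$, so $\|\hat\mu\|_2^2=\sup_{\Vmat\in\R^{\dtwo\times\rhat}}\|\projV\by^{(\pi)}\|_2^2$; splitting $\by^{(\pi)}=\bmu^{(\pi)}+\bepsilon^{(\pi)}$ and discarding the negligible $\|\bmu^{\pi}\|_2$, it suffices to bound the two supremum-of-projection quantities $\sup_{\Vmat}\|\projV\bepsilon^{(\pi)}\|_2^2$ and $\sup_{\Vmat}\|\projV\bmu^{(\pi)}\|_2^2$.

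The first supremum is controlled exactly as in the proof of Theorem~\ref{theorempredictionrisk}: $\bepsilon^{(\pi)}$ is independent of the design and uniformly $\sg(\sgparamepsilon)$, so conditioning on $\{\Xmat_i\}_i$ and combining the covering-number bound of Theorem~\ref{theoremcoveringnumber} with sub-Gaussian quadratic-form concentration gives $\sup_{\Vmat}\|\projV\bepsilon^{(\pi)}\|_2^2=O(\rstar d\log n)$ with probability $\ge 1-2\exp(-c\rstar d\log n)$. The second supremum is the main obstacle: $\bmu^{(\pi)}$ is built from the very covariates that define the projectors $\projV$, so it cannot be conditioned away, and the crude bound $\sup_{\Vmat}\|\projV\bmu^{(\pi)}\|_2^2\le\|\bmu^{(\pi)}\|_2^2\asymp n\,\sigmamusq$ is far too weak. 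The remedy is to exploit the cycle structure of $\pi\in\Pitilde$ through the partition $[n]=\Asetpione\sqcup\Asetpitwo\sqcup\Asetpithree$. On $\Asetpione$ (and symmetrically $\Asetpitwo$) the coordinates of $\bmu^{(\pi)}$ are $\langle\Xmat_{\pi(i)},\Thetastar\ranglehs$ with $\pi(i)$ ranging over an index set disjoint from $\Asetpione$, so conditioning on $\{\Xmat_j:j\notin\Asetpione\}$ makes the $\Asetpione$-block of $\bmu^{(\pi)}$ a fixed vector of squared norm $\lesssim n\,\sigmamusq$, independent of the fresh sub-Gaussian randomness in the $\Asetpione$-rows of each $\Xmat_\Vmat$; a short computation bounds the contribution of this block by the supremum over $\Vmat$ of the squared projection of that fixed vector onto $\col\big((\Xmat_\Vmat)_{\Asetpione,:}\big)$, and the covering bound of Theorem~\ref{theoremcoveringnumber} restricted to the $\Asetpione$-subdesign, together with standard concentration, yields $O(\sigmamusq\,\rstar d\log n)=o(\rstar d\log n)$. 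The block $\Asetpithree$ has only $K(\pi)\le\log^2 n$ coordinates and, by the moment conditions, contributes a lower-order polylogarithmic term. A triangle inequality across the three blocks gives $\sup_{\Vmat}\|\projV\bmu^{(\pi)}\|_2^2=o(\rstar d\log n)$.

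Assembling the pieces, for each fixed $\pi\in\Pitilde$ we obtain $\sum_i[\fhatlzero(\Xmat_i;\cdot;\pi)-\fpi(\Xmat_i)]^2=O(\rstar d\log n)$ on an event of probability $\ge 1-\delta$; because Assumption~\ref{assumptiontestingconsistency}(ii) asks only that, separately for each $\pi\in\Pitilde\cup\{\pizero\}$, the bounding event have probability at least $1-\delta$, no union bound over the (super-exponentially many) permutations is needed, and it suffices that $\exp(-c\rstar d\log n)\to0$. This establishes condition (ii) with $\ell_n$ of the order in Theorem~\ref{theorempredictionrisk} scaled by $n$, i.e.\ $\ell_n\asymp\rstar d\log n$ (the additive $\log(1/\delta)$ term in the stated rate accommodating non-fixed $\delta$), completing the verification. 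The one genuinely delicate step --- and the reason this cannot be deduced directly from Corollary~\ref{corollarypredictionrisk}, whose proof requires the noise to be independent of the \emph{entire} design --- is the control of $\sup_{\Vmat}\|\projV\bmu^{(\pi)}\|_2^2$: there the effective signal and the design are entangled, and only the cycle-count bound $K(\pi)\le\log^2 n$ together with the disjointness afforded by the $\Asetpione,\Asetpitwo$ decomposition keeps the covering machinery of Theorem~\ref{theoremcoveringnumber} usable.
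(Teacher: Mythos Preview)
Your proof correctly identifies that the main work lies in the case $\pi\in\Pitilde$, and your use of the partition $\Asetpione\sqcup\Asetpitwo\sqcup\Asetpithree$ to decouple the design from the permuted responses is the right idea. However, your route differs from the paper's in a way that introduces an unnecessary complication, and one step is not quite right as written.

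The paper's proof follows the template of Theorem~\ref{theoremtestinglmlzero}: it invokes the Pythagorean projection inequality (Lemma~\ref{lemmaprojectioninequality}, whose proof carries over verbatim to the rank-constrained estimator) to bound the \emph{full-data} fitted sum of squares $\sum_i\langle\Xmat_i,\Thetahatlzeropi\ranglehs^2$ by the sum of the fitted sums of squares from three \emph{separate} rank-constrained sub-problems, one on each $\Asetpij$. On $\Asetpione$ (and symmetrically $\Asetpitwo$), the sub-problem has covariates $\{\Xmat_i:i\in\Asetpione\}$ independent of responses $\{y_{\pi(i)}:i\in\Asetpione\}$, so the \emph{entire response vector} plays the role of sub-Gaussian noise and Theorem~\ref{theorempredictionrisk} applies directly to that sub-problem. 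No separate handling of the signal and noise components of $y^{(\pi)}$ is needed, and Corollary~\ref{corollarypermutationmeanremainder} finishes.

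Your approach instead splits $\by^{(\pi)}=\bmu^{(\pi)}+\bepsilon^{(\pi)}$ and bounds $\sup_{\Vmat}\|\projV\bmu^{(\pi)}\|_2$ and $\sup_{\Vmat}\|\projV\bepsilon^{(\pi)}\|_2$ separately. The noise term is fine. For the signal term, you correctly reduce to a sub-design projection, but then condition on $\{\Xmat_j:j\notin\Asetpione\}$, making $w=(\bmu^{(\pi)})_{\Asetpione}$ \emph{fixed} and the sub-design random. This conditioning is backwards for the covering argument: Theorem~\ref{theoremcoveringnumber} together with the Hanson--Wright step in the proof of Theorem~\ref{theorempredictionrisk} work by fixing the design (so the projectors form a fixed finite net) and exploiting randomness in the vector being projected. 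With $w$ fixed and the projectors random there is nothing to concentrate---once the sub-design is realized, $\sup_{\Vmat}\|P_{\Vmat}^{A}w\|_2^2$ is a deterministic number that can be as large as $\|w\|_2^2$. The fix is simple: do not condition on $\{\Xmat_j:j\notin\Asetpione\}$; instead observe that the entries $w_i=\langle\Xmat_{\pi(i)},\Thetastar\ranglehs$, $i\in\Asetpione$, are i.i.d.\ mean-zero sub-Gaussian (since $\pi$ is injective and $\pi(\Asetpione)\cap\Asetpione=\emptyset$) and independent of the sub-design $\{\Xmat_i:i\in\Asetpione\}$, then condition on the sub-design and run the proof of Theorem~\ref{theorempredictionrisk} with $w$ in the role of $\bepsilon$. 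With this correction your argument goes through, but the paper's route via the projection inequality is shorter and sidesteps the signal/noise split entirely.
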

	
	We can now establish the asymptotic validity of the low-rank test, again through applying Theorems \ref{theoremtestingnull}, \ref{theoremtestingalternative} and Corollary \ref{corollarysgmean}. 
	
	\begin{corollarytheorem}\label{corollarytestingtrmlzero}
		Under the assumptions of Theorem \ref{theoremtestingtrmlzero}, we have that 
		\begin{align*}
		    \limsup_{n \to \infty} \phzero(\phi(\fhatlzero) \leq \alpha) \leq \alpha.
		\end{align*}
		In addition, if $h$ is sufficiently large (not depending on $n$), then
		\begin{align*}
			\liminf_{n \to \infty} \phone(\phi(\fhatlzero) \leq \alpha) > \alpha
		\end{align*}
		for the hypotheses testing problem in equation \eqref{hypothesesalternativetrm} and also for the hypotheses in equation \eqref{hypothesesalternativetrmsg} if Assumption \ref{assumptiontestingtrmkoltchinskii} holds.
	\end{corollarytheorem}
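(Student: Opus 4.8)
The plan is to read off Corollary~\ref{corollarytestingtrmlzero} directly from the abstract permutation-test theory of Section~\ref{sectiontestinggeneral}, applied with the generic estimator $\fhat$ taken to be the rank-constrained estimator $\fhatlzerotrm$ of \eqref{eq:theta_zero_pi}. Indeed, Theorem~\ref{theoremtestingnull} supplies the type~I error control, Theorem~\ref{theoremtestingalternative} the power statement under \eqref{hypothesesalternativetrm}, and Corollary~\ref{corollarysgmean} the power statement under \eqref{hypothesesalternativetrmsg}, so it suffices to verify that the hypotheses of those three results are met in the present model.

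First I would check that Assumption~\ref{assumptiontestingtrm} implies Assumption~\ref{assumptiontestingmoments}. Writing $f(\bX) = \langle \bX, \Thetastar \ranglehs = \langle \vec(\bX), \vec(\Thetastar) \rangleeuclid$, the zero-mean property of $\vec(\bX)$ gives $\E f(\bX) = 0$ and $\sigmamusq = \E f^2(\bX) = \vec(\Thetastar)^\top \bSigma \vec(\Thetastar)$, the latter being exactly the quantity constrained in the alternatives \eqref{hypothesesalternativetrm} and \eqref{hypothesesalternativetrmsg}; moreover, since $\vec(\bX)$ is sub-Gaussian, $f(\bX)$ is a scalar sub-Gaussian random variable and in particular has a finite fourth moment, while the mean, variance, and independence conditions on $\epsilon$ are assumed verbatim. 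Next, Assumption~\ref{assumptiontestingconsistency} is furnished by Theorem~\ref{theoremtestingtrmlzero}: part~(i) (equivariance) holds because the objective in \eqref{eq:theta_zero_pi}, and hence the estimator and its prediction, sees the responses only through the permuted vector $(y_{\pi(i)})_{i \in [n]}$; part~(ii), with $\ell_n$ the sequence exhibited in Theorem~\ref{theoremtestingtrmlzero}, is the substantive content of that theorem, which ultimately rests on the $\kappa$-free prediction-risk bound of Theorem~\ref{theorempredictionrisk} and hence on the covering-number estimate of Theorem~\ref{theoremcoveringnumber}. Fixing $\delta$ small enough that $\delta < \alpha(1-\alpha)/4$, Theorem~\ref{theoremtestingnull} then yields $\limsup_{n\to\infty} \phzero(\phi(\fhatlzero) \leq \alpha) \leq \alpha$; and since the signal strength in \eqref{hypothesesalternativetrm} is precisely the instance $\sigmamusq = h(n^{-1/2} + \ell_n / n)$ of \eqref{hypothesesalternative}, Theorem~\ref{theoremtestingalternative} gives $\liminf_{n\to\infty} \phone(\phi(\fhatlzero) \leq \alpha) > \alpha$ for that alternative.

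For the faster rate of \eqref{hypothesesalternativetrmsg}, it remains to verify Assumption~\ref{assumptiontestingmeansubgaussian}, which I would deduce from Assumption~\ref{assumptiontestingtrmkoltchinskii}. Taking $\vvec = \vec(\Thetastar)$ there gives $\Vert f(\bX) \Vert_{\psi_{2}}^{2} \leq \kol\, \E f^2(\bX) = \kol\, \sigmamusq$; since the square of a sub-Gaussian variable is sub-exponential with $\psi_1$-norm of order the square of its $\psi_2$-norm, this forces $\var(f^2(\bX)) \lesssim \Vert f(\bX) \Vert_{\psi_{2}}^{4} \leq \kol^{2}\, \sigmamu^{4}$, i.e.\ Assumption~\ref{assumptiontestingmeansubgaussian} with its constant proportional to $\kol^{2}$. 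Corollary~\ref{corollarysgmean}, whose hypotheses are now in hand, then delivers $\liminf_{n\to\infty} \phone(\phi(\fhatlzero) \leq \alpha) > \alpha$ under \eqref{hypothesesalternativetrmsg}, which completes the proof.

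I do not anticipate a genuine obstacle, since the hard analysis is already done: the real-algebraic covering-number bound, the collinearity-free prediction bound feeding Theorem~\ref{theoremtestingtrmlzero}, and the enumerative control of the cycle count $K(\pi)$ underlying Theorems~\ref{theoremtestingnull}--\ref{theoremtestingalternative}. The only care needed is bookkeeping: tracking that the passage from Assumption~\ref{assumptiontestingtrmkoltchinskii} to Assumption~\ref{assumptiontestingmeansubgaussian} turns the constant $\kol$ into a multiple of $\kol^{2}$, and confirming that the $\ell_n$ produced by Theorem~\ref{theoremtestingtrmlzero} is on the same scale (namely $\asymp \rstar d \log n$) as the quantity entering the $n^{-1}\ell_n$ terms in \eqref{hypothesesalternativetrm}--\eqref{hypothesesalternativetrmsg}, so that those contiguous alternatives are exactly the instances \eqref{hypothesesalternative} and \eqref{hypothesesalternativesg} of the general hypotheses.
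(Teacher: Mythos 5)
Your proposal is correct and follows exactly the route the paper intends: the corollary is read off from Theorems \ref{theoremtestingnull} and \ref{theoremtestingalternative} and Corollary \ref{corollarysgmean}, with Theorem \ref{theoremtestingtrmlzero} supplying Assumption \ref{assumptiontestingconsistency} and Assumptions \ref{assumptiontestingtrm} and \ref{assumptiontestingtrmkoltchinskii} implying Assumptions \ref{assumptiontestingmoments} and \ref{assumptiontestingmeansubgaussian} respectively. Your added bookkeeping (the $\kol^2$ constant in passing to $\var(f^2(\bX))$, and the identification of $\ell_n \asymp \rstar d\log n$ so that \eqref{hypothesesalternativetrm}--\eqref{hypothesesalternativetrmsg} instantiate \eqref{hypothesesalternative}--\eqref{hypothesesalternativesg}) is sound and only makes explicit what the paper leaves implicit.
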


    \subsection{Robustness of the Rank-Constrained Test} \label{sectiontestingtrmmis}
	
	In the previous sections, we assume that our test statistics have been tuned in an oracular fashion, either through $\lambda$ for regularized estimation or through $r$ for rank-constrained estimation.  However, such oracles are not available in practice.  In this section, we consider the performance of the permutation test with a possibly misspecified rank.  Fix $\rtilde$ and define 
	\begin{align*}
		\Thetamattilde \defined \Thetamattilde(\rtilde) = \argmin_{\Thetamat \in \R^{\done \times \dtwo}, \rank(\Thetamat) \leq \rtilde} \e \langle \Xmat, \Thetastar - \Thetamat \ranglehs^{2}.
	\end{align*}
	In words, $\Thetamattilde$ is the best rank-$\rtilde$ approximation to $\Thetastar$ in terms of prediction risk if $1 \leq \rtilde < \rstar$ and $\Thetamattilde = \Thetastar$ if $\rtilde \geq \rstar$. Then given $h > 0$, consider the hypotheses testing problems
	\begin{align}\label{hypothesesalternativetrmmisspecified}
		H_{0}:  \Thetastar = \zeromat_{\done \times \dtwo}
		\vs
		H_{1}:  \vec(\Thetamattilde)^\T \mathbf{\Sigma} \vec(\Thetamattilde) = h\Big(\frac{1}{\sqrt{n}} + \frac{\rtilde d \log(n)}{ n}\Big)
	\end{align}
	and 
	\begin{align}\label{hypothesesalternativetrmmisspecifiedsg}
		H_{0}:  \Thetastar = \zeromat_{\done \times \dtwo}
		\vs
		H_{1}:  \vec(\Thetamattilde)^\T \mathbf{\Sigma} \vec(\Thetamattilde) = h \frac{\rtilde d \log(d)}{n}.
	\end{align}
	Note that the alternative hypotheses above are stated in terms of $\tilde\bTheta$ and thus vary with respect to $\tilde r$. Intuitively, underestimating the rank refrains one from capturing the complete signal. It is thus hopeless to detect the presence of a nonzero $\bTheta ^ *$ if the signal encapsulated by $\tilde\bTheta$ is too weak.
	
	\begin{theorem}\label{theoremtestingtrmlzeromisspecified}
		Consider model \eqref{modeltrm} and choose $r = \rtilde$ in \eqref{eq:theta_zero_pi}. Under Assumption \ref{assumptiontestingtrm}, we have that
		\begin{align*}
			\limsup_{n \to \infty} \phzero(\phi(\fhatlzero) \leq \alpha) \leq \alpha.
		\end{align*}
		In addition, if $h$ is sufficiently large (not depending on $n$), then
		\begin{align*}
			\liminf_{n \to \infty} \phone(\phi(\fhatlzero) \leq \alpha) > \alpha
		\end{align*}
		 for the hypotheses testing problem in equation \eqref{hypothesesalternativetrmmisspecified} and also for the hypotheses in equation \eqref{hypothesesalternativetrmmisspecifiedsg} if Assumption \ref{assumptiontestingtrmkoltchinskii} holds.
	\end{theorem}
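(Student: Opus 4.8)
The plan is to reduce this statement to an application of the general machinery in Section \ref{sectiontestinggeneral}, exactly as was done for Corollary \ref{corollarytestingtrmlzero}, but with the true rank $\rstar$ replaced by the working rank $\rtilde$ and with the signal being the rank-$\rtilde$ projection $\tilde\bTheta$ rather than $\bTheta^*$. Concretely, I would invoke Theorem \ref{theoremtestingnull} for the type I error claim and Theorem \ref{theoremtestingalternative} (and Corollary \ref{corollarysgmean} under Assumption \ref{assumptiontestingtrmkoltchinskii}) for the power claim, so the entire burden is to verify that $\fhatlzerotrm(\cdot;\cdot;\pi)$ with $r=\rtilde$ satisfies Assumption \ref{assumptiontestingconsistency} with $\ell_n = O(\rtilde d \log(n))$, together with checking that the moment conditions in Assumption \ref{assumptiontestingmoments} (and \ref{assumptiontestingmeansubgaussian}) transfer to the surrogate signal $f(\bX)=\langle\bX,\tilde\bTheta\ranglehs$.

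The equivariance part (i) of Assumption \ref{assumptiontestingconsistency} is immediate and identical to the well-specified case: permuting the responses before fitting the rank-constrained least-squares problem \eqref{eq:theta_zero_pi} is the same as feeding permuted data to the identity-permutation estimator, since the objective depends on the pairs $(\bX_i, y_{\pi(i)})$ symmetrically in the index $i$. The substantive work is part (ii): I need a $\kappa$-free bound of the form $\sum_i [\fhatpi(\bX_i) - \fpi(\bX_i)]^2 \le \ell_n$ with high probability, uniformly over $\pi \in \Pitilde \cup \{\pizero\}$. Here I would apply Corollary \ref{corollarypredictionrisk} — the misspecified signal-plus-noise version of the prediction-risk bound — to each permuted dataset. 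Under the identity permutation, the data follow $y = \langle\bX,\bTheta^*\ranglehs + \epsilon$, and Corollary \ref{corollarypredictionrisk} gives $\frac1n\sum_i(\langle\bX_i,\bTheta^*\ranglehs - \langle\bX_i,\Thetahatlzeropizero\ranglehs)^2$ bounded by the sum of the best rank-$\rtilde$ in-sample approximation error and $\cone \rtilde d \log(n)/n$; since $\tilde\bTheta$ is (population) optimal and the in-sample approximation error concentrates around $\vec(\tilde\bTheta)^\T\bSigma\vec(\tilde\bTheta)\cdot\ldots$ — more carefully, I want $\sum_i(\langle\bX_i,\tilde\bTheta - \Thetahatlzeropizero\ranglehs)^2 \lesssim \rtilde d\log(n) + n\cdot(\text{approx error})$, and the approximation error is absorbed into the $h$-scaled signal budget of the alternative, leaving $\ell_n = O(\rtilde d\log(n))$. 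For $\pi \in \Pitilde$, the permuted responses decompose over the index sets $\Asetpione, \Asetpitwo, \Asetpithree$ as in Lemma \ref{lemmapermutationmeanremainder}: on $\Asetpione\cup\Asetpitwo$ the covariates and responses are independent so the conditional mean vanishes, and on $\Asetpithree$ (of size $\le K(\pi) \le \log^2 n$) the contribution is lower order; applying Corollary \ref{corollarypredictionrisk} to the permuted data (whose "signal" is the zero function plus a negligible remainder) and a union bound over $\Pitilde$ using the $4\exp(-\ctwo \rtilde d \log n)$ tail — which beats $|\Pitilde| \le n!$ once $\rtilde d \gtrsim \log(n)$, but more carefully one argues as in the proof of Theorem \ref{theoremtestingtrmlzero} — yields the required uniform bound. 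I would essentially mirror the proof of Theorem \ref{theoremtestingtrmlzero} line for line, the only change being that the target function in part (ii) is $\langle\bX_i,\tilde\bTheta\ranglehs$ and the approximation error $\min_{\rank(\bTheta)\le\rtilde}\frac1n\sum_i(\langle\bX_i,\bTheta^*-\bTheta\ranglehs)^2$ appears but is controlled because it matches (up to concentration) the population quantity $\e\langle\bX,\bTheta^*-\tilde\bTheta\ranglehs^2$, which together with $\vec(\tilde\bTheta)^\T\bSigma\vec(\tilde\bTheta) \asymp h\,\rtilde d\log(n)/n$ keeps everything at the right scale.

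The main obstacle I anticipate is the interaction between model misspecification and the permutation argument in part (ii). In the well-specified case the permuted "signal" is genuinely (close to) zero, so Corollary \ref{corollarypredictionrisk} applied to permuted data is essentially Theorem \ref{theorempredictionrisk} with $f \equiv 0$. Under misspecification I must track the rank-$\rtilde$ approximation error of $\bTheta^*$ through both the $\pizero$ analysis and the $\Asetpithree$ remainder analysis, and make sure that this error — which does not vanish — is exactly what gets charged to the signal budget $h(\tfrac1{\sqrt n} + \tfrac{\rtilde d\log n}{n})$ in the alternative \eqref{hypothesesalternativetrmmisspecified}, rather than contaminating the null-side concentration. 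The cleanest route is to verify that the estimator's in-sample prediction error relative to $\tilde\bTheta$ (not $\bTheta^*$) is the quantity controlled at rate $\ell_n/n = O(\rtilde d\log(n)/n)$, which is what Assumption \ref{assumptiontestingconsistency}(ii) actually requires once we reinterpret $\fpi$ as the conditional mean of the permuted response — under $H_1$ with $\pi = \pizero$ this conditional mean is $\langle\bX_i,\bTheta^*\ranglehs$, so strictly I would define the surrogate and show $\sum_i(\langle\bX_i,\Thetahatlzeropizero - \tilde\bTheta\ranglehs)^2$ is the relevant error and then note $\sum_i\langle\bX_i,\tilde\bTheta\ranglehs^2 \asymp n\sigmamusq$ drives the power computation in Theorem \ref{theoremtestingalternative} verbatim. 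Once this bookkeeping is set up correctly, the remaining steps are routine invocations of the already-established Theorems \ref{theoremtestingnull}, \ref{theoremtestingalternative} and Corollary \ref{corollarysgmean}.
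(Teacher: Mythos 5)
There is a genuine gap, and it sits exactly at the point you flagged as "the main obstacle" without resolving it. Your plan is to verify Assumption \ref{assumptiontestingconsistency}(ii) for $\Thetahatlzero^{(\pizero)}(\rtilde)$ relative to the surrogate signal $\langle\Xmat,\Thetatilde\ranglehs$ with $\ell_n = O(\rtilde d\log n)$, and then feed this into Theorem \ref{theoremtestingalternative} as a black box. But Corollary \ref{corollarypredictionrisk} only gives
\begin{align*}
\frac1n\sum_{i=1}^n\langle\Xmat_i,\Thetastar-\Thetahatlzero^{(\pizero)}\ranglehs^2
\lesssim \min_{\rank(\Thetamat)\le\rtilde}\frac1n\sum_{i=1}^n\langle\Xmat_i,\Thetastar-\Thetamat\ranglehs^2 + \frac{\rtilde d\log n}{n},
\end{align*}
and the approximation term concentrates around $\e\langle\Xmat,\Thetastar-\Thetatilde\ranglehs^2$, which the alternative hypotheses \eqref{hypothesesalternativetrmmisspecified}--\eqref{hypothesesalternativetrmmisspecifiedsg} do \emph{not} constrain: they fix only the captured signal $\vec(\Thetatilde)^\T\bSigma\vec(\Thetatilde)$, while the discarded part $\Thetastar-\Thetatilde$ can be arbitrarily large. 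So it is not true that this error "is absorbed into the $h$-scaled signal budget," and the bound $\sum_i\langle\Xmat_i,\Thetahatlzero^{(\pizero)}-\Thetatilde\ranglehs^2\le\ell_n$ you need for Assumption \ref{assumptiontestingconsistency}(ii) simply fails when $\e\langle\Xmat,\Thetastar-\Thetatilde\ranglehs^2\gg\rtilde d\log(n)/n$. Reinterpreting $f$ as $\langle\cdot,\Thetatilde\ranglehs$ does not help either, since the residual $\eta=\langle\Xmat,\Thetastar-\Thetatilde\ranglehs$ then becomes part of the "noise" and is not independent of $\Xmat$.

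The paper's proof avoids needing any consistency of $\Thetahatlzero^{(\pizero)}$ toward $\Thetatilde$. Writing $\Thetatilde=\Utilde\Vtilde^\T$, it uses the ERM property together with the Pythagorean theorem to get the one-sided bound $\Lambdapizero\ge\Vert\projVtilde\Yvec\Vert_2^2=\Vert\mutildevec\Vert_2^2+2\langle\mutildevec,\etavec+\epsilonvec\rangleeuclid+\Vert\projVtilde(\etavec+\epsilonvec)\Vert_2^2$, i.e., it compares $\Thetahatlzero^{(\pizero)}$ against the oracle least-squares fit with $\Vtilde$ held fixed. The cross term is then controlled via the population first-order condition, which makes $\vec(\Xmat\Vtilde)$ uncorrelated with $\eta$, so that the uncaptured signal enters only through a lower-order $n^{1/2}$ fluctuation rather than through an $\ell_n$-type estimation error. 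Only a lower bound on $\Lambdapizero$ is needed on the alternative side, which is why the misspecification is harmless there; your treatment of the permuted statistics $\Lambdapi$ for $\pi\in\Pitilde$ (zero signal on $\Asetpione\cup\Asetpitwo$, $\log^2(n)$-sized remainder on $\Asetpithree$, Theorem \ref{theorempredictionrisk} applied piecewise) matches the paper and is fine. Without the oracle-projection lower bound, however, the central step of your argument does not go through.
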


	Theorem \ref{theoremtestingtrmlzeromisspecified} should be compared with Corollary \ref{corollarytestingtrmlzero}.  In particular, by considering $\Thetamattilde$ rather than $\Thetastar$, we can still distinguish between the null and the alternative hypotheses as long as the best rank-$\rtilde$ approximation captures sufficient amount of signal; hence, this allows for the situation where the rank of $\Thetastar$ is misspecified. In particular, by setting $\rtilde = \rstar$, the hypotheses in equations \eqref{hypothesesalternativetrmmisspecified} and \eqref{hypothesesalternativetrmmisspecifiedsg} are equivalent to equations \eqref{hypothesesalternativetrm} and \eqref{hypothesesalternativetrmsg} respectively, and Corollary \ref{corollarytestingtrmlzero} can be viewed as a special case of Theorem \ref{theoremtestingtrmlzeromisspecified}.  As another special case, by setting $\rtilde = 1$, we obtain a tuning-parameter free test that allows for testing the best rank-one approximation of $\Thetastar$.  To the best of our knowledge, this is the first test in the high-dimensional literature that is robust to misspecification of the tuning parameter.
	
	Theorem \ref{theoremtestingtrmlzeromisspecified} seems to imply that the test is more likely to detect the signal if $\tilde r$ is large. However, it should be noted that the required minimal power depends linearly on $\rtilde$ while the signal increases at most by a factor of $\rtilde$.  Thus, the rank-one test, being focused on the leading eigenvalue, may have higher efficiency than a test that is more omnidirectional (for example, see \cite{bickel2006tailor}).
	
	The proof of Theorem \ref{theoremtestingtrmlzeromisspecified} relies on the least-squares structure of the rank-constrained estimator; in particular, the vector of fitted values can be written as $\projV \Yvec$ for some $\Vmat \in \R^{\dtwo \times \rtilde}$.  Thus, the result can immediately be extended to sparse high-dimensional linear models with best subset selection.
    
    By comparison, the choice of the tuning parameter $\lambda$ for the lasso and nuclear norm regularized estimator is inherently challenging.  For estimation, the value of $\lambda$ is usually chosen through cross-validation.  However, to the best of our knowledge, it remains open how to tune these regularization parameters for inference. We investigate a few natural methods to perform cross-validation in Section \ref{subsectionsimulationinference}, which do not lead to satisfactory empirical performance.
    
	\section{Simulations}\label{sectionsimulations}
	
	\subsection{Models and Methods}
	
	In this section, we demonstrate the empirical performance, both in terms of estimation and inference, of the rank-constrained estimator on synthetic data.  We assume the model in equation \eqref{modeltrm}, which is reproduced below 
	\begin{align*}
	    y_{i} = \langle \Xmat_{i}, \Thetamat ^ * \ranglehs + \epsilon_{i}.
	\end{align*}
	In our simulations, we set $n = 200$ and $\done = \dtwo = 20$ and let $r^{\ast} \in \{1, 2, 3, 4\}$.  Regarding the design, we consider two distinct settings, corresponding to two common examples of low-rank trace regression:  (i) compressed sensing and (ii) matrix completion.  In the setting of compressed sensing, we let $\vec(\Xmat_{i})$ have independent and identically distributed standard Gaussian entries.  For matrix completion, we let $\{\Xmat_{1}, \dots, \Xmat_{n}\}$ be a uniform random sample from $\{\be_{j} \be_{k}^\T\}_{j \in [\done], k \in [\dtwo]}$ without replacement, where $\be_{j}$ denotes the $j$th standard basis vector.  
	
	In both scenarios, we generate $\epsilon_{i}$ as independent and identically distributed standard Gaussian random variables.  Finally, we define the signal to noise ratio, denoted by ``SNR,'' as the variance of $\langle \Xmat_{i}, \Thetastar \ranglehs$; the value of SNR is a monotonic function of the power represented by $h$ in equations \eqref{hypothesesalternative} and \eqref{hypothesesalternativesg}.  For in-sample prediction, we consider a logarithmic scale and let $\mathrm{SNR} \in \{1, 1.43, 2.04, 2.92, 4.18, 5.98, 8.55, 12.23, 17.48, 25\}$ and, for inference, we let $\mathrm{SNR} \in \{0, 0.125, 0.25, 0.375, 0.5, 0.75, \allowbreak 1, 2\}$.  To achieve this, we first generate $r^{\ast}$ values uniformly from $(-1, 1)$ to form a diagonal matrix $\mathbf{\Lambda}$.  Then, we draw $\Umat^{\ast}$ and $\Vmat^{\ast}$ from the uniform Haar measure on the Stiefel manifold of dimension $\done \times r^{\ast}$ and $\dtwo \times r^{\ast}$ respectively and set $\Thetastar = \Umat^{\ast} \mathbf{\Lambda} (\Vmat^{\ast})^\T$.  Finally, we scale $\Thetastar$ such that $\vec(\Thetastar)^\T \mathbf{\Sigma} \vec(\Thetastar) = \mathrm{SNR}$.
	
	\subsection{In-Sample Prediction}
	
	For estimation, we compare the in-sample prediction risk of the rank-constrained estimator with that of an oracle least-squares estimator (LS) and the nuclear norm regularized estimator (NN) from equation \eqref{eq:nuclear_ols}.  The oracle least-squares estimator has access to the right singular space $\Vmat^{\ast}$.  Computationally, we use alternating minimization (AM) to approximate the rank-constrained estimator (for example, see \cite{hastie2015matrix} and the references therein).  We employ multiple restarts to avoid local stationary points, using a coarse grid of nuclear norm estimators and a spectral estimator to initialize the AM algorithm; this yields a total of six initializations.  Then, our final rank-constrained estimator is the one that minimizes equation \eqref{eq:rank_ols} out of the six different initializations.  To avoid misspecification of the tuning parameter for both estimators ($r$ for the rank-constrained estimator and $\lambda$ for the nuclear norm estimator), we consider oracle tuning parameters.  To accomplish this, we run both estimators over a grid of tuning parameters for each setting over 1000 Monte Carlo experiments and choose the tuning parameter that yields the minimum prediction risk, which is defined as in \eqref{eq:in_sample_pred}. 
	
	The results of our simulation are presented in Tables \ref{tableesttrmz} and \ref{tableestmc} and Figures \ref{plot_est_z} and \ref{plot_est_mc}.  In general, we see that the performance of the rank-constrained estimator relative to the nuclear norm regularized estimator improves as SNR increases.  This is consistent with the simulation results of \cite{hastie2020best}, who noticed that best subset selection outperforms the lasso for high SNR regimes in high-dimensional linear models.  
	
\begin{table}[t]
\centering
\caption{Simulations for In-Sample Prediction Risk for Gaussian Design} 
\label{tableesttrmz}
\begin{tabular}{|l|l|rrrrrrrrrr|}
   \hline
 & {SNR} & 1.00 & 1.43 & 2.04 & 2.92 & 4.18 & 5.98 & 8.55 & 12.23 & 17.48 & 25.00 \\ 
   \hline
 & LS & 0.10 & 0.10 & 0.10 & 0.10 & 0.10 & 0.10 & 0.10 & 0.10 & 0.10 & 0.10 \\ 
  $r^\ast = 1$ & AM & 0.24 & 0.22 & 0.22 & 0.21 & 0.21 & 0.20 & 0.20 & 0.20 & 0.20 & 0.20 \\ 
   & NN & 0.26 & 0.28 & 0.30 & 0.31 & 0.33 & 0.34 & 0.35 & 0.36 & 0.37 & 0.38 \\ 
   \hline
 & LS & 0.20 & 0.20 & 0.20 & 0.20 & 0.20 & 0.20 & 0.20 & 0.20 & 0.20 & 0.20 \\ 
  $r^\ast = 2$ & AM & 0.43 & 0.48 & 0.53 & 0.51 & 0.48 & 0.47 & 0.45 & 0.44 & 0.43 & 0.42 \\ 
   & NN & 0.32 & 0.35 & 0.39 & 0.42 & 0.45 & 0.48 & 0.50 & 0.53 & 0.55 & 0.57 \\ 
   \hline
 & LS & 0.30 & 0.30 & 0.30 & 0.30 & 0.30 & 0.30 & 0.30 & 0.30 & 0.30 & 0.30 \\ 
  $r^\ast = 3$ & AM & 0.56 & 0.64 & 0.63 & 0.64 & 0.67 & 0.73 & 0.71 & 0.69 & 0.67 & 0.65 \\ 
   & NN & 0.36 & 0.41 & 0.45 & 0.49 & 0.53 & 0.57 & 0.60 & 0.64 & 0.67 & 0.69 \\ 
   \hline
 & LS & 0.40 & 0.40 & 0.40 & 0.40 & 0.40 & 0.40 & 0.40 & 0.40 & 0.40 & 0.40 \\ 
  $r^\ast = 4$ & AM & 0.65 & 0.72 & 0.75 & 0.80 & 0.84 & 0.84 & 0.84 & 0.87 & 0.90 & 0.88 \\ 
   & NN & 0.39 & 0.44 & 0.49 & 0.54 & 0.59 & 0.63 & 0.67 & 0.71 & 0.75 & 0.78 \\ 
   \hline
\end{tabular}
\end{table}

	\begin{figure}[h]
	    \centering
	    \includegraphics[scale=0.145]{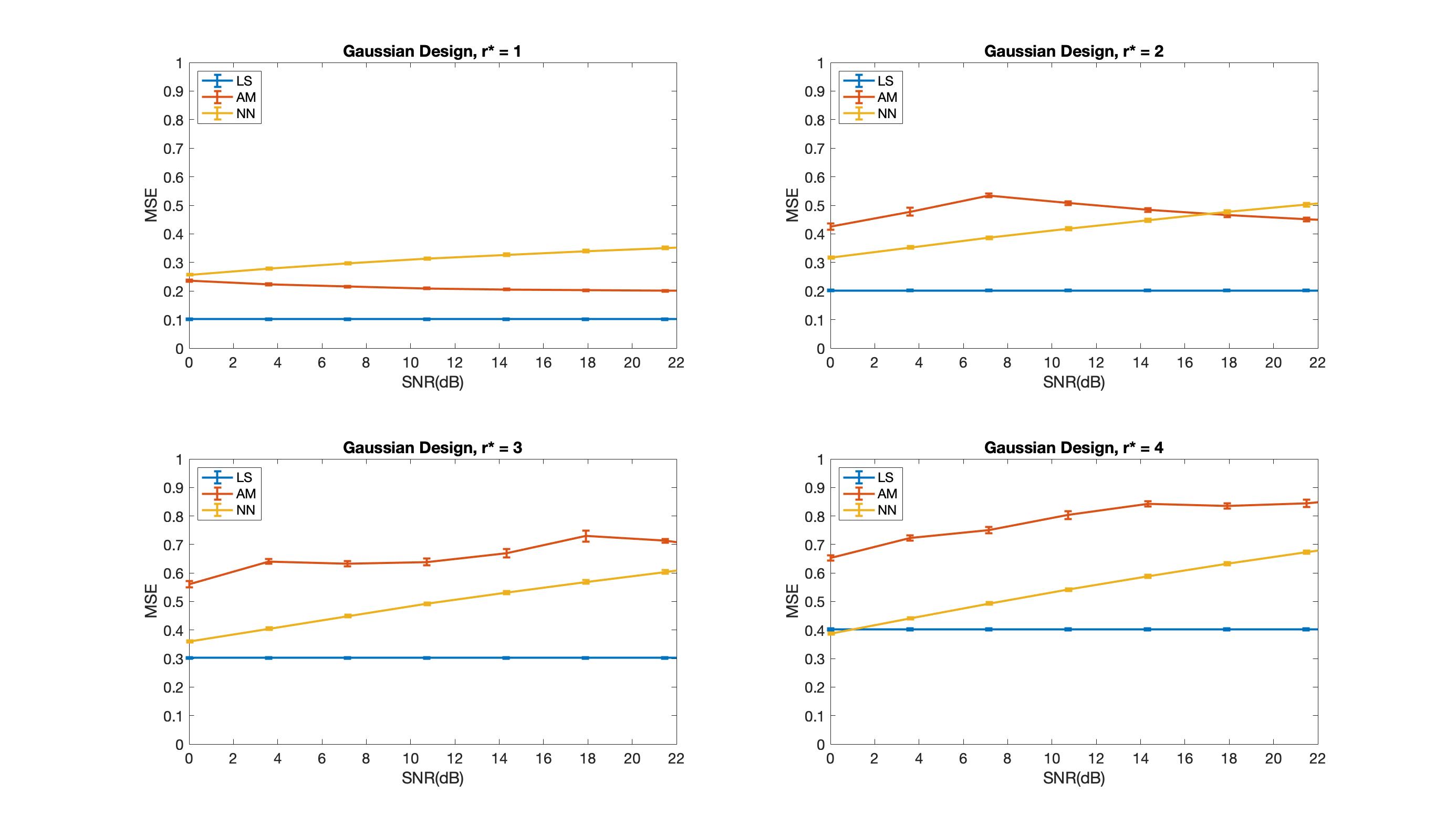}
	    \caption{Plots of in-sample prediction error for Gaussian design}
	    \label{plot_est_z}
	\end{figure}
	
	\subsection{Inference}\label{subsectionsimulationinference}
	
	For inference, we evaluate the performance of our permutation approach from Section \ref{sectiontesting} and consider the permutation test using both alternating minimization and nuclear norm regularization.  Throughout, we are testing at level $\alpha = 0.05$, and our permutation tests randomly draw nineteen permutations from $\Pi \backslash \{\pizero\}$.  To provide a benchmark for the performance of our testing procedure, we consider two oracles that have access to the right singular space $\Vmat^{\ast}$:  (i) an oracle that uses the low-dimensional $F$-test (FT) and (ii) an oracle that uses our permutation test using least-squares estimation (LS).  
	
	For nuclear norm regularization, we consider four procedures to choose $\lambda$.  First, we consider oracle tuning.  
	Since we use 100 Monte Carlo experiments, when choosing the oracle value of $\lambda$ for nuclear norm regularization, we only consider the values of $\lambda$ for which the number of rejections under the null hypothesis ($\mathrm{SNR} = 0$) is less than or equal to nine.  The nine arises from constructing a confidence interval for $\alpha$ based on 100 independent Bernoulli experiments with success probability $0.05$ as it is two standard errors above $0.05$.  The other three approaches to choosing $\lambda$ are all variations on five-fold cross-validation.  The first procedure, denoted DS for ``data splitting,'' splits the data into two halves, selecting $\lambda$ on the first half by cross-validation and using the selected value of $\lambda$ on the second half to compute $\Lambda^{(\pi)}$ for all $\pi \in \Pi$.  We use half of the data to select $\lambda$ since we need sufficient observations in both halves to estimate rank-three and rank-four matrices.  The second procedure, denoted IS for ``in-sample,'' performs cross-validation for each $\pi \in \Pi$ to obtain $\hat{\lambda}^{(\pi)}$.  After $\hat{\lambda}^{(\pi)}$ is selected, the model is refit using all the observations to obtain in-sample predictions for $\Lambda^{(\pi)}$.  Finally, the third procedure, denoted OS for ``out-of-sample,'' also performs cross-validation for each $\pi \in \Pi$, obtaining five values of $\hat{\lambda}^{(\pi)}$, one for each of the five folds.  Instead of refitting the model as before, we use out-of-sample predicted values when computing $\Lambda^{(\pi)}$.
	
	For alternating minimization, we report all the results for $r \in \{1, 2, 3, 4\}$.  Note that we are using the oracle value of $\lambda$ for nuclear norm regularization.  Thus, we view this as a theoretical benchmark with which to compare the rank-constrained estimator for testing.
	
	The results are presented in Tables \ref{tableinftrmz} and \ref{tableinftrmmc} and Figures \ref{plot_inf_z} and \ref{plot_inf_mc}. We put Table \ref{tableinftrmmc} and Figure \ref{plot_inf_mc} in the supplementary materials to save the space here. We note that, as the SNR increases for a fixed rank, the power of our testing procedure increases.  In general, even under misspecification of the tuning parameter for the rank-constrained estimator, we are able to maintain nominal coverage.  Moreover, even when $\rstar > 1$, it seems that the rank-constrained estimator with $r = 1$ has comparable performance to the optimal nuclear norm regularized estimator as well as permutation testing with larger values of $r$.  Thus, even without any oracular knowledge of $\Thetastar$, we obtain a valid and powerful test that is tuning parameter free by using the rank-constrained estimator with $r = 1$, which is consistent with Theorem \ref{theoremtestingtrmlzeromisspecified}.  
	
	However, when $\lambda$ is chosen via cross-validation, the performance of the nuclear-norm regularized estimator degrades significantly relative to the oracle.  For data-splitting, which has the best empirical performance for non-oracle nuclear norm regularization, we lose half of our observations to selecting $\lambda$ and, compared with the cross-fitting of \cite{chernozhukov2018double}, we cannot switch the roles of the two halves of the dataset.  For the remaining two settings, where $\hat{\lambda}$ depends on $\pi \in \Pi$, we empirically notice that $\hat{\lambda}^{(\pi)} < \hat{\lambda}^{(\pizero)}$ for $\pi \neq \pizero$.  This suggests that $\Lambdapi$ compensates the poorer model fit compared with $\Lambdapizero$ by increasing the complexity of the model, thus enabling overfitting.
	
\begin{table}[H]
\centering
\caption{Simulations for Inference for Gaussian Design} 
\label{tableinftrmz}
\begin{tabular}{|l|l|rrrrrrrrrr|}
   \hline
 & SNR & 0.000 & 0.125 & 0.250 & 0.375 & 0.500 & 0.625 & 0.750 & 0.875 & 1.000 & 2.000\\ 
   \hline
 & FT & 0.02 & 0.81 & 1.00 & 1.00 & 1.00 & 1.00 & 1.00 & 1.00 & 1.00 & 1.00 \\ 
   & LS & 0.06 & 0.74 & 0.99 & 1.00 & 1.00 & 1.00 & 1.00 & 1.00 & 1.00 & 1.00 \\ 
   & NN-OR & 0.05 & 0.23 & 0.64 & 0.85 & 0.92 & 0.98 & 0.99 & 1.00 & 1.00 & 1.00 \\ 
   & NN-DS & 0.08 & 0.09 & 0.13 & 0.17 & 0.26 & 0.27 & 0.37 & 0.48 & 0.43 & 0.55\\ 
   & NN-IS & 0.04 & 0.04 & 0.06 & 0.11 & 0.14 & 0.08 & 0.08 & 0.14 & 0.09 & 0.02 \\ 
  $r^\ast = 1$ & NN-OS & 0.06 & 0.09 & 0.03 & 0.10 & 0.06 & 0.04 & 0.07 & 0.07 & 0.01 & 0.05 \\ 
   & AM-1 & 0.06 & 0.22 & 0.49 & 0.80 & 0.92 & 0.99 & 1.00 & 1.00 & 1.00 & 1.00 \\ 
   & AM-2 & 0.03 & 0.18 & 0.41 & 0.66 & 0.84 & 0.92 & 0.98 & 1.00 & 1.00 & 1.00\\ 
   & AM-3 & 0.03 & 0.14 & 0.22 & 0.48 & 0.62 & 0.80 & 0.92 & 0.92 & 0.98 & 1.00 \\ 
   & AM-4 & 0.07 & 0.10 & 0.15 & 0.28 & 0.45 & 0.60 & 0.72 & 0.75 & 0.83 & 1.00 \\ 
   \hline
 & FT & 0.02 & 0.65 & 0.99 & 1.00 & 1.00 & 1.00 & 1.00 & 1.00 & 1.00 & 1.00 \\ 
   & LS & 0.01 & 0.54 & 0.92 & 1.00 & 1.00 & 1.00 & 1.00 & 1.00 & 1.00 & 1.00 \\ 
   & NN-OR & 0.06 & 0.20 & 0.54 & 0.78 & 0.90 & 0.92 & 0.94 & 0.99 & 0.99 & 1.00 \\ 
   & NN-DS & 0.00 & 0.05 & 0.16 & 0.25 & 0.24 & 0.33 & 0.41 & 0.40 & 0.42 & 0.52  \\ 
   & NN-IS & 0.07 & 0.07 & 0.15 & 0.05 & 0.13 & 0.08 & 0.12 & 0.13 & 0.09 & 0.01  \\ 
  $r^\ast = 2$ & NN-OS & 0.07 & 0.05 & 0.04 & 0.02 & 0.04 & 0.06 & 0.08 & 0.08 & 0.02 & 0.04  \\ 
   & AM-1 & 0.04 & 0.13 & 0.45 & 0.71 & 0.88 & 0.90 & 0.96 & 0.98 & 0.98 & 1.00  \\ 
   & AM-2 & 0.07 & 0.15 & 0.33 & 0.56 & 0.79 & 0.89 & 0.96 & 0.97 & 0.99 & 1.00  \\ 
   & AM-3 & 0.05 & 0.10 & 0.25 & 0.44 & 0.67 & 0.78 & 0.91 & 0.94 & 0.94 & 1.00  \\ 
   & AM-4 & 0.07 & 0.10 & 0.18 & 0.25 & 0.41 & 0.54 & 0.64 & 0.78 & 0.77 & 1.00  \\ 
   \hline
 & FT & 0.02 & 0.56 & 0.89 & 1.00 & 1.00 & 1.00 & 1.00 & 1.00 & 1.00 & 1.00  \\ 
   & LS & 0.03 & 0.48 & 0.84 & 0.96 & 1.00 & 1.00 & 1.00 & 1.00 & 1.00 & 1.00  \\ 
   & NN-OR & 0.05 & 0.20 & 0.45 & 0.67 & 0.85 & 0.93 & 0.96 & 0.98 & 0.97 & 1.00  \\ 
   & NN-DS & 0.05 & 0.08 & 0.12 & 0.24 & 0.31 & 0.33 & 0.37 & 0.40 & 0.43 & 0.59  \\ 
   & NN-IS & 0.01 & 0.08 & 0.16 & 0.09 & 0.17 & 0.11 & 0.10 & 0.09 & 0.16 & 0.00  \\ 
  $r^\ast = 3$ & NN-OS & 0.02 & 0.03 & 0.10 & 0.04 & 0.07 & 0.03 & 0.06 & 0.03 & 0.06 & 0.05  \\ 
   & AM-1 & 0.08 & 0.12 & 0.25 & 0.61 & 0.71 & 0.80 & 0.86 & 0.96 & 0.94 & 1.00  \\ 
   & AM-2 & 0.04 & 0.15 & 0.27 & 0.49 & 0.69 & 0.80 & 0.91 & 0.95 & 0.96 & 1.00  \\ 
   & AM-3 & 0.08 & 0.09 & 0.22 & 0.38 & 0.53 & 0.70 & 0.83 & 0.88 & 0.96 & 1.00  \\ 
   & AM-4 & 0.08 & 0.09 & 0.15 & 0.27 & 0.44 & 0.39 & 0.57 & 0.68 & 0.80 & 1.00  \\ 
   \hline
 & FT & 0.02 & 0.43 & 0.84 & 0.98 & 1.00 & 1.00 & 1.00 & 1.00 & 1.00 & 1.00  \\ 
   & LS & 0.05 & 0.40 & 0.75 & 0.93 & 0.99 & 1.00 & 1.00 & 1.00 & 1.00 & 1.00  \\ 
   & NN-OR & 0.03 & 0.21 & 0.39 & 0.71 & 0.82 & 0.85 & 0.95 & 0.98 & 0.98 & 1.00  \\ 
   & NN-DS & 0.02 & 0.11 & 0.12 & 0.16 & 0.28 & 0.23 & 0.35 & 0.36 & 0.33 & 0.55  \\ 
   & NN-IS & 0.07 & 0.02 & 0.10 & 0.11 & 0.10 & 0.15 & 0.13 & 0.11 & 0.10 & 0.01  \\ 
  $r^\ast = 4$ & NN-OS & 0.02 & 0.06 & 0.03 & 0.03 & 0.05 & 0.06 & 0.06 & 0.04 & 0.05 & 0.04  \\ 
   & AM-1 & 0.07 & 0.12 & 0.28 & 0.49 & 0.63 & 0.64 & 0.78 & 0.88 & 0.90 & 0.96  \\ 
   & AM-2 & 0.10 & 0.15 & 0.22 & 0.48 & 0.56 & 0.71 & 0.78 & 0.88 & 0.90 & 0.99  \\ 
   & AM-3 & 0.06 & 0.13 & 0.21 & 0.41 & 0.55 & 0.64 & 0.74 & 0.81 & 0.82 & 0.99  \\ 
   & AM-4 & 0.05 & 0.08 & 0.20 & 0.30 & 0.33 & 0.52 & 0.53 & 0.61 & 0.73 & 0.97  \\ 
   \hline
\end{tabular}
\end{table}

    \begin{figure}[h]
	    \centering
	    \includegraphics[scale=0.2]{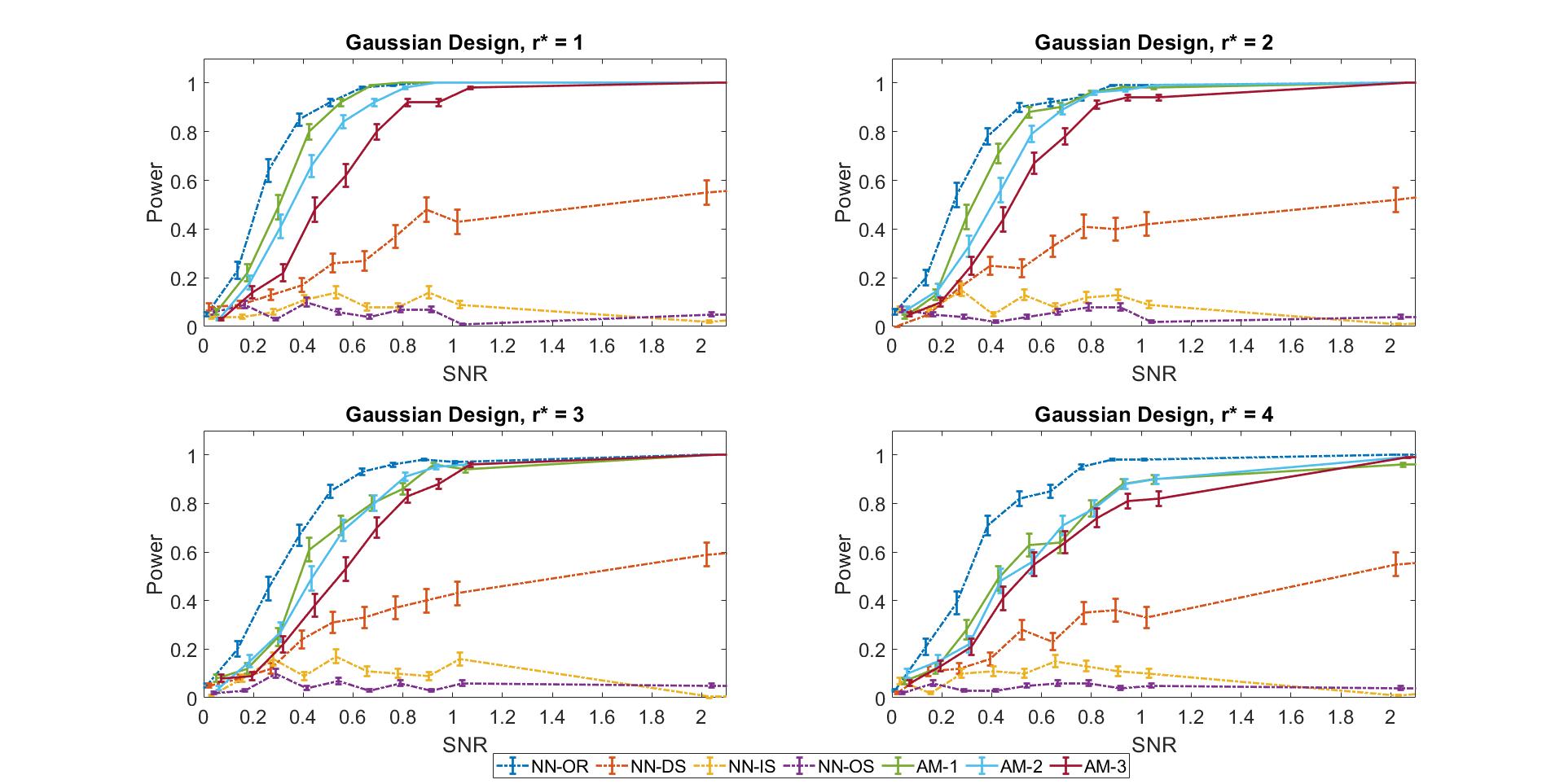}
	    \caption{Plots of power for Gaussian design}
	    \label{plot_inf_z}
	\end{figure}
		\section{Proofs}\label{sectionproofs}
	
	\begin{proof}[Proof of Lemma \ref{lem1}]
	By definition of $\Thetahatlzero$, we have that 
		\begin{align*}
			\frac{1}{n} \sum_{i=1}^{n} ( y_{i} - \langle \Xmat_{i}, \Thetahatlzero \ranglehs )^{2} \leq \frac{1}{n} \sum_{i=1}^{n} (y_{i} - \langle \Xmat_{i}, \Thetastar \ranglehs )^{2},
		\end{align*}
		which implies that 
		\begin{align*}
			\frac{1}{n} \sum_{i=1}^{n} \langle \Xmat_{i}, \Thetahatlzero - \Thetastar \ranglehs^{2} \leq \frac{2}{n} \sum_{i=1}^{n} \epsilon_{i} \langle \Xmat_{i}, \Thetahatlzero - \Thetastar \ranglehs.
		\end{align*}
		If $n^{-1} \sum_{i=1}^{n} \langle \Xmat_{i}, \Thetahatlzero - \Thetastar \ranglehs^{2} = 0$, then the result follows.  Therefore, we only consider the case where $n^{-1} \sum_{i=1}^{n} \langle \Xmat_{i}, \Thetahatlzero - \Thetastar \ranglehs^{2} > 0$.  Dividing both sides of the above display by $(n^{-1} \sum_{i=1}^{n} \langle \Xmat_{i}, \Thetahatlzero - \Thetastar \ranglehs^{2})^{1/2}$ yields
		\begin{align*}
			\Big(\frac{1}{n} \sum_{i=1}^{n} \langle \Xmat_{i}, \Thetahatlzero - \Thetastar \ranglehs^{2} \Big)^{1/2} 
			&\leq \left(\frac{4}{n}\right)^{1/2} \frac{\sum_{i=1}^{n} \epsilon_{i} \langle \Xmat_{i}, \Thetahatlzero - \Thetastar \ranglehs}{(\sum_{i=1}^{n} \langle \Xmat_{i}, \Thetahatlzero - \Thetastar \ranglehs^{2})^{1/2}} \\ 
			&\leq \left(\frac{4}{n}\right)^{1/2} \sup_{\substack{\Mmat \in \R^{\done \times \dtwo} \\ \rank(\Mmat) \leq 2r \\ \sum_{i=1}^{n} \langle \Xmat_{i}, \Mmat \ranglehs^{2} > 0}} \frac{\sum_{i=1}^{n} \epsilon_{i} \langle \Xmat_{i}, \Mmat \ranglehs}{(\sum_{i=1}^{n} \langle \Xmat_{i}, \Mmat \ranglehs^{2})^{1/2}}.
		\end{align*}
		The second inequality follows from the fact that $\rank(\Thetahatlzero - \Thetastar) \leq r + \rstar \leq 2r$.  Now, for any $\Mmat$ satisfying the above, there exist matrices $\Umat \in \R^{\done \times 2r}$ and $\Vmat \in \R^{\dtwo \times 2r}$ such that $\Mmat = \Umat\Vmat^\T$.  Note that $\langle \Xmat_{i}, \Umat\Vmat^\T \ranglehs = \langle \Xmat_{i}\Vmat, \Umat \ranglehs$.  Let $\Xmat_{\Vmat} \in \R^{n \times 2r\done}$ be the matrix whose $i$th row is $\vec(\Xmat_{i}\Vmat)$ and $\gammaU \defined \vec(\Umat) \in \R^{2r\done}$.  Denote by $\projV \in \R^{n \times n}$ the projection operator onto the column space of $\Xmat_{\Vmat}$.  Therefore, we may further bound the above display by
		\begin{align}\label{equationcs}
    		\begin{aligned}
    		    \Big( \frac{1}{n} \sum_{i=1}^{n} \langle \Xmat_{i}, \Thetahatlzero - \Thetastar \ranglehs^{2} \Big)^{1/2} 
    			&\leq \left(\frac{4}{n}\right)^{1/2} \sup_{\substack{\Umat \in \R^{\done \times 2r} \\ \Vmat \in \R^{\dtwo \times 2r} \\ \sum_{i=1}^{n} \langle \Xmat_{i}\Vmat, \Umat \ranglehs^{2} > 0}} \frac{\sum_{i=1}^{n} \epsilon_{i} \langle \Xmat_{i}\Vmat, \Umat \ranglehs}{(\sum_{i=1}^{n} \langle \Xmat_{i}\Vmat, \Umat \ranglehs^{2})^{1/2}} \\ 
    			&\leq \left(\frac{4}{n}\right)^{1/2} \sup_{\substack{\Umat \in \R^{\done \times 2r} \\ \Vmat \in \R^{\dtwo \times 2r} \\ \Vert \Xmat_{\Vmat} \gammaU \Vert_{2} > 0}} \frac{\epsilonvec^\T \Xmat_{\Vmat} \gammaU}{\Vert \Xmat_{\Vmat} \gammaU \Vert_{2}} \\ 
    			&\leq \left(\frac{4}{n}\right)^{1/2} \sup_{\Vmat \in \R^{\dtwo \times 2r}} \Vert \projV \epsilonvec \Vert_{2},
    		\end{aligned}
		\end{align}
		where the last line follows from the Cauchy-Schwarz inequality and the identity $\projV \Xmat_{\Vmat} = \Xmat_{\Vmat}$. The conclusion follows immediately by squaring both sides.
	\end{proof}

    \def\aone{a_{1}}
    \def\atwo{a_{2}}
    \def\athree{a_{3}}
    \def\afour{a_{4}}
	\let\cone\aone 
	\let\ctwo\atwo
	\let\cthree\athree
	\let\cfour\afour
	\begin{proof}[Proof of Theorem \ref{theorempredictionrisk}]
		Now, for a fixed $\Vmat \in \R^{\dtwo \times 2r}$, there exists a matrix $\Vmattilde \in \netepsilon(\projset)$ such that $\Vert \projV - \projVtilde \Verths \leq \varepsilon$.  Then, 
		\begin{align*}
			\Vert \projV \epsilonvec \Vert_{2}^{2} 
			\leq 2 \Vert (\projV - \projVtilde) \epsilonvec \Vert_{2}^{2} + 2 \Vert \projVtilde \epsilonvec \Vert_{2}^{2} 
			\leq 2 \varepsilon^{2} \Vert \epsilonvec \Vert_{2}^{2} + 2 \Vert \projVtilde \epsilonvec \Vert_{2}^{2}.
		\end{align*}
		Define $\Tset = \Tset_{n}$ as 
		\begin{align*}
			\Tset \defined \bigcap_{\Vmattilde \in \netepsilon(\projset)} \{\Vert \projVtilde \epsilonvec \Vert_{2}^{2} \leq \ctwo r \max(\done, \dtwo \log(\done n^{3} / \varepsilon)) + 2\sigmaepsilonsq r\dtwo \} \cap \{ \Vert \epsilonvec \Vert_{2}^{2} \leq \cone n + n\sigmaepsilonsq \}
		\end{align*}
		for some constants $\cone, \ctwo \geq \max(1, \sgparamepsilon^{2})$ to be chosen later.  By the Hanson-Wright inequality (Theorem 1.1 of \cite{rudelson2013}), it follows that
		\begin{align*}
			&\p(\Vert \epsilonvec \Vert_{2}^{2} > t + \sigmaepsilonsq n) \leq 2 \exp \left[-\cthree \min \left( \frac{t^{2}}{n \sgparamepsilon^{4}}, \frac{t}{\sgparamepsilon^{2}} \right) \right], 
			\\ 
			&\p(\Vert \projVtilde \epsilonvec \Vert_{2}^{2} > t + 2\sigmaepsilonsq r\dtwo) \leq 2 \exp \left[-\cthree \min \left( \frac{t^{2}}{2r \dtwo \sgparamepsilon^{4}}, \frac{t}{\sgparamepsilon^{2}} \right) \right],
		\end{align*}
		for some universal constant $\cthree > 0$.  Hence, a union bound implies
		\begin{align*}
			\p(\Tset^\C) &\leq 2 \exp \left[ -\ctwo \cthree \sgparamepsilon^{-2} r \max (\done, \dtwo \log(\done n^{3} / \varepsilon)) + \covnumepsilon(\projset) \right] + 2\exp [ -\cone \cthree \sgparamepsilon^{-2} n ] \\ 
			&\leq 2 \exp \left[ -\ctwo \cthree \sgparamepsilon^{-2} r \max (\done, \dtwo \log(\done n^{3} / \delta)) + 2r\done \log(2) + (2r\dtwo + 1) \log(24r\done n^{3} / \varepsilon) \right] \\ 
			&\phantom{\leq} + 2\exp [ -\cone \cthree \sgparamepsilon^{-2} n ] \\ 
			&\leq 2 \exp[-\cfour r \max(\done, \dtwo \log(\done n^{3} / \varepsilon))] + 2 \exp[-\cfour n].
		\end{align*}
		for some constant $\cfour > 0$ depending on $\sgparamepsilon$, $\cone$, $\ctwo$, and $\cthree$.  Now, on the event $\Tset$, it follows that 
		\begin{align*}
			\Vert \projV \epsilonvec \Vert_{2}^{2} 
			\leq 2 \varepsilon^{2} (\cone + \sigmaepsilonsq) n + 2 \ctwo r \max(\done, \dtwo \log(\done n^{3} / \varepsilon)) + 4 r \dtwo \sigmaepsilonsq.
		\end{align*}
		Letting $\varepsilon = rn^{-1} \max(\done, \dtwo)$, we have 
		\begin{align*}
			\Vert \projV \epsilonvec \Vert_{2}^{2} 
			\leq 2 (\cone + \sigmaepsilonsq) r^{2} n^{-1} \max(\done^{2}, \dtwo^{2}) + 2 \ctwo r \max(\done, \dtwo \log(r^{-1}n^{2}\min(1, \done \dtwo^{-1}))) + 4 r \dtwo \sigmaepsilonsq.
		\end{align*}
		Since this holds for an arbitrary $\Vmat \in \R^{\dtwo \times 2r}$, we conclude that 
		\begin{align*}
			\frac{1}{n} \sum_{i=1}^{n} \langle \Xmat_{i}, \Thetahatlzero - \Thetastar \ranglehs^{2}
			&\leq 8 (\cone + \sigmaepsilonsq) r^{2}\max(\done^{2}, \dtwo^{2})n^{-2} + 16 \sigmaepsilonsq r \dtwo n^{-1} \\ 
			&\phantom{\leq} + 8 \ctwo r \max(\done, \dtwo \log(r^{-1}n^{2}\min(1, \done \dtwo^{-1}))) n^{-1}.
		\end{align*}
		Let $c_{1} = 8 \cone + 24 \sigmaepsilonsq + 8 \ctwo$ and $c_{2} = \cfour$.  Using the fact that $\done \leq \dtwo = d$ and $rd < n$ finishes the proof.
	\end{proof}
	
	\begin{proof}[Proof of Corollary \ref{corollarypredictionrisk}]
	    Let $\Thetatilde$ be defined as 
		\begin{align*}
			\Thetatilde \defined \Thetatilde(\rhat) = \argmin_{\Thetamat \in \R^{\done \times \dtwo}, \rank(\Thetamat) \leq \rhat} \sum_{i=1}^{n} (f(x_{i}) - \langle \Xmat_{i}, \Thetamat \ranglehs)^{2}.
		\end{align*}
		Then, by the definition of $\Thetahatlzero$, we have that 
		\begin{align*}
			\sum_{i=1}^{n} (y_{i} - \langle \Xmat_{i}, \Thetahatlzero \ranglehs)^{2} \leq \sum_{i=1}^{n} (y_{i} - \langle \Xmat_{i}, \Thetatilde \ranglehs)^{2}.
		\end{align*}
		Expanding the square and rearranging yields
		\begin{align*}
			\sum_{i=1}^{n} (f(x_{i}) - \langle \Xmat_{i}, \Thetahatlzero \ranglehs)^{2} \leq \sum_{i=1}^{n} (f(x_{i}) - \langle \Xmat_{i}, \Thetatilde \ranglehs)^{2} + 2 \sum_{i=1}^{n} \epsilon_{i} \langle \Xmat_{i}, \Thetahatlzero - \Thetatilde \ranglehs.
		\end{align*}
		If $\sum_{i=1}^{n} (f(x_{i}) - \langle \Xmat_{i}, \Thetahatlzero \ranglehs)^{2} = 0$, then the result immediately follows.  Hence, for the remainder of the proof, we assume that $\sum_{i=1}^{n} (f(x_{i}) - \langle \Xmat_{i}, \Thetahatlzero \ranglehs)^{2} > 0$.  Now, dividing both sides, it follows that 
		\begin{align*}
			&\Big\{ \sum_{i=1}^{n} (f(x_{i}) - \langle \Xmat_{i}, \Thetahatlzero \ranglehs)^{2} \Big\}^{1/2} 
			\\&\phantom{\leq}\leq
			\frac{\sum_{i=1}^{n} (f(x_{i}) - \langle \Xmat_{i}, \Thetatilde \ranglehs)^{2}}{\Big\{ \sum_{i=1}^{n} (f(x_{i}) - \langle \Xmat_{i}, \Thetahatlzero \ranglehs)^{2} \Big\}^{1/2}} 
			+ 2 \frac{\sum_{i=1}^{n} \epsilon_{i} \langle \Xmat_{i}, \Thetahatlzero - \Thetatilde \ranglehs}{\Big\{ \sum_{i=1}^{n} (f(x_{i}) - \langle \Xmat_{i}, \Thetahatlzero \ranglehs)^{2} \Big\}^{1/2}}.
		\end{align*}
		By the construction of $\Thetatilde$, we deduce that
		\begin{align*}
			\sum_{i=1}^{n} (f(x_{i}) - \langle \Xmat_{i}, \Thetatilde \ranglehs)^{2} \leq \sum_{i=1}^{n} (f(x_{i}) - \langle \Xmat_{i}, \Thetahatlzero \ranglehs)^{2}
		\end{align*}
		and 
		\begin{align*}
			\sum_{i=1}^{n} \langle \Xmat_{i}, \Thetahatlzero - \Thetatilde \ranglehs^{2} 
			&\leq 
			2 \sum_{i=1}^{n} (f(x_{i}) - \langle \Xmat_{i}, \Thetahatlzero \ranglehs)^{2} 
			+ 2 \sum_{i=1}^{n} (f(x_{i}) - \langle \Xmat_{i}, \Thetatilde \ranglehs)^{2} \\ 
			&\leq 4 \sum_{i=1}^{n} (f(x_{i}) - \langle \Xmat_{i}, \Thetahatlzero \ranglehs)^{2}.
		\end{align*}
		Therefore, we have that 
		\begin{align*}
			\frac{\sum_{i=1}^{n} (f(x_{i}) - \langle \Xmat_{i}, \Thetatilde \ranglehs)^{2}}{\Big\{ \sum_{i=1}^{n} (f(x_{i}) - \langle \Xmat_{i}, \Thetahatlzero \ranglehs)^{2} \Big\}^{1/2}} 
			\leq 
			\Big\{ \sum_{i=1}^{n} (f(x_{i}) - \langle \Xmat_{i}, \Thetatilde \ranglehs)^{2} \Big\}^{1/2}.
		\end{align*}
		Moreover, note that $\rank(\Thetahatlzero - \Thetatilde) \leq 2\rhat$; hence, by an analogous argument to equation \eqref{equationcs}, 
		\begin{align*}
			\sum_{i=1}^{n} \epsilon_{i} \langle \Xmat_{i}, \Thetahatlzero - \Thetatilde \ranglehs 
			&\leq \sup_{\Vmat \in \R^{\dtwo \times 2\rhat}} \Vert \projV \epsilonvec \Vert_{2} \Big\{ \sum_{i=1}^{n} \langle \Xmat_{i}, \Thetahatlzero - \Thetatilde \ranglehs^{2} \Big\}^{1/2} \\ 
			&\leq 2 \sup_{\Vmat \in \R^{\dtwo \times 2\rhat}} \Vert \projV \epsilonvec  \Vert_{2} \Big\{ \sum_{i=1}^{n} (f(x_{i}) - \langle \Xmat_{i}, \Thetahatlzero \ranglehs)^{2} \Big\}^{1/2}.
		\end{align*}
		Combining these calculations, it follows that 
		\begin{align*}
			\Big\{ \sum_{i=1}^{n} (f(x_{i}) - \langle \Xmat_{i}, \Thetahatlzero \ranglehs)^{2} \Big\}^{1/2} 
			\leq 
			\Big\{ \sum_{i=1}^{n} (f(x_{i}) - \langle \Xmat_{i}, \Thetatilde \ranglehs)^{2} \Big\}^{1/2}
			+ 4 \sup_{\Vmat \in \R^{\dtwo \times 2\rhat}} \Vert \projV \epsilonvec \Vert_{2}.
		\end{align*}
		It is left to bound $\sup_{\Vmat \in \R^{\dtwo \times 2\rhat}} \Vert \projV \epsilonvec \Vert_{2}^{2}$, which is provided in the proof of Theorem \ref{theorempredictionrisk}.
	\end{proof}

	\def\cone{c_{1}}
	\def\ctwo{c_{2}}
	\def\cthree{c_{3}}
	\def\cfour{c_{4}}
	\begin{proof}[Proof of Lemma \ref{lemmapermutationmeanremainder}]
		Temporarily fix $\pi \in \Pitilde$ and note that $\fpi(x_{i}) = 0$ for $i \in \Asetpione \cup \Asetpitwo$.  Applying Chebyshev's inequality yields that 
		\begin{align*}
			\sum_{i=1}^{n} [\fpi(x_{i})]^{2} = \sum_{i\in \Asetpithree} [\fpi(x_{i})]^{2} \leq \sum_{i\in \Asetpithree} f^{2}(x_{\pi(i)}) \leq \log^{2}(n) \sigmamusq + \cone \log(n)
		\end{align*}
		with probability at least $1 - \delta$
		for some $\cone > 0$ depending on $\delta$ and $\E f^4(x)$.  We use the fact that $|\Asetpithree| \leq \log^{2}(n)$ from the construction of $\Pitilde$.  Since $\pi \in \Pitilde$ is arbitrary, this finishes the proof.
	\end{proof}

	\begin{proof}[Proof of Theorem \ref{theoremtestingnull}]
		The proof is standard.  For example, see Section 15.2 of \cite{lehmann2006}.
	\end{proof}

	\begin{proof}[Proof of Theorem \ref{theoremtestingalternative}]
		Fix $0 < \delta < \alpha (1 - \alpha) / 4$.  Then, by the triangle inequality, it follows that 
		\begin{align*}
			\Lambdapizero = \sum_{i=1}^{n} [\fhatpizero(x_{i})]^{2} \geq 2^{-1} \sum_{i=1}^{n} [\fpizero(x_{i})]^{2} - \sum_{i=1}^{n} [\fhatpizero(x_{i}) - \fpizero(x_{i})]^{2}.
		\end{align*}
		From Assumption \ref{assumptiontestingmoments}, the Chebyshev's inequality implies that there exists a constant $\tone > 0$ (not depending on $n$) such that, for $n$ sufficiently large, 
		\begin{align}\label{equationmeanberryesseen}
			\sum_{i=1}^{n} [\fpizero(x_{i})]^{2} \geq n\sigmamusq - \tone n^{1/2}
		\end{align}
		with probability at least $1 - \delta$.  Assumption \ref{assumptiontestingconsistency} ensures that 
		\begin{align*}
			\sum_{i=1}^{n} [\fhatpizero(x_{i}) - \fpizero(x_{i})]^{2} \leq \ell_{n}
		\end{align*}
		with probability at least $1 - \delta$.  Hence, it holds with probability at least $1 - 2\delta$ that 
		\begin{align*}
			\Lambdapizero \geq 2^{-1} (n\sigmamusq - \tone n^{1/2}) - \ell_{n}.
		\end{align*}
		Now, temporarily fix $\pi \in \Pitilde$.  Again, by the triangle inequality, it follows that 
		\begin{align*}
			\Lambdapi \leq 2\sum_{i=1}^{n} [\fhatpi(x_{i})]^{2} - \fpi(x_{i})]^{2} + 2 \sum_{i=1}^{n} [\fpi(x_{i})]^{2}
		\end{align*}
		Assumption \ref{assumptiontestingconsistency} implies that 
		\begin{align*}
			\sum_{i=1}^{n} [\fhatpi(x_{i})]^{2} - \fpi(x_{i})]^{2} \leq \ell_{n}
		\end{align*}
		with probability at least $1 - \delta$.  Moreover, we have from Lemma \ref{lemmapermutationmeanremainder} that 
		\begin{align*}
			\sum_{i \in \Asetpithree} [\fpi(x_{i})]^{2} \leq \log^{2}(n) \sigmamusq + \ttwo \log(n)
		\end{align*}
		with probability at least $1 - \delta$ for some constant $\ttwo > 0$.  Hence, with probability at least $1 - 2\delta$, 
		\begin{align*}
			\Lambdapi \leq 2\ell_{n} + 2 \log^{2}(n) \sigmamusq + 2\ttwo \log(n).
		\end{align*}
		Combining the above calculations, for $n$ sufficiently large, 
		\begin{align*}
			\Lambdapizero - \Lambdapi 
			&\geq 2^{-1} (n\sigmamusq - \tone n^{1/2}) - 3\ell_{n} - 2 \log^{2}(n) \sigmamusq - 2\ttwo \log(n) \\ 
			&\geq 2^{-1} \{h(n^{1/2} + \ell_{n}) - \tone n^{1/2} \} - 3\ell_{n} - 2h (n^{-1/2} + \ell_{n} n^{-1}) \log^{2}(n) - 2\ttwo \log(n) \\ 
			&> 0
		\end{align*}
		with probability at least $1 - 4\delta$ if $h > 0$ is sufficiently large (not depending on $n$).  Thus, 
		\begin{align*}
			\phone(\Lambdapizero > \Lambdapi) \geq 1 - 4\delta
		\end{align*}
		for $n$ sufficiently large.  Since $\pi$ is arbitrary, it follows that 
		\begin{align*}
			\liminf_{n \to \infty} \min_{\pi \in \Pitilde} \phone(\Lambdapizero > \Lambdapi) \geq 1 - 4\delta.
		\end{align*}
		Hence, 		
		\begin{align*}
			\limsup_{n \to \infty} \e_{H_{1}} \phi 
			&= \limsup_{n \to \infty} |\Pi|^{-1} \e_{H_{1}} \sum_{\pi \in \Pi} \indic{\Lambdapizero \leq \Lambdapi} \\ 
			&= 1 - \liminf_{n \to \infty}  |\Pi|^{-1} \sum_{\pi \in \Pitilde} \phone(\Lambdapizero > \Lambdapi) \\ 
			&\leq 4\delta.
		\end{align*}
		Since $\delta < \alpha (1-\alpha) / 4$, this finishes the proof.
	\end{proof}
	
	\begin{proof}[Proof of Corollary \ref{corollarysgmean}]
		By Chebyshev's inequality, there exists a constant $\tthree > 0$ such that 
		\begin{align*}
			\sum_{i=1}^{n} [\fpizero(x_{i})]^{2} \geq n\sigmamusq - \tthree n^{1/2} \sigmamusq
		\end{align*}
		with probability at least $1 - \delta$.  The remainder of the proof is identical, replacing the bound in equation \eqref{equationmeanberryesseen} with the above bound.  
	\end{proof}

	\begin{proof}[Proof of Theorem \ref{theoremtestinglmlasso}]
		It is immediate from the definition of $\fhatlasso(\cdot)$ that $\fhatlasso(\xvec_{i} ; (\xvec_{j}, y_{j})_{j=1}^{n} ; \pi) = \fhatlasso(\xvec_{i} ; (\xvec_{j}, y_{\pi(j)})_{j=1}^{n} ; \pizero)$ for any $\pi \in \Pi$.  Now, if $\pi = \pizero$, the compatibility condition for the design is satisfied for some constant $\cc$ with probability at least $1 - \delta / 2$.  Then, it follows from Theorem 6.1 of \cite{buhlmann2011} that
		\begin{align*}
			 \p\Big\{\sum_{i=1}^{n} \langle \xvec_{i}, \betahatlasso - \betastar \rangleeuclid^{2} \leq \ctwo \lambda^{2} sn / \cc^{2} \Big\} \geq 1 - \delta/2
		\end{align*}
		for some constant $\ctwo > 0$.  Now, let $\pi \in \Pitilde$ be arbitrary and define the event 
		\begin{align*}
			\Tset \defined \Big\{ \max_{j \in [p]} \Big|\sum_{i=1}^{n} x_{i,j} y_{\pi(i)} \Big| \leq 3 \cthree^{-1/2} n^{1/2} \separamxi \log^{1/2}(6p / \delta) \Big\},
		\end{align*}
		where $x_{i,j}$ denotes the $j$th entry of $\xvec_{i}$.  Fix $j \in [p]$ and let $\xi_{i,j} \defined x_{i, j} y_{\pi(i)}$.  By the triangle inequality, we have that 
		\begin{align*}
			\Big|\sum_{i=1}^{n} \xi_{i,j} \Big|
			\leq \Big|\sum_{i \in \Asetpione} \xi_{i,j} \Big| 
			+ \Big|\sum_{i \in \Asetpitwo} \xi_{i,j} \Big|
			+ \Big|\sum_{i \in \Asetpithree} \xi_{i,j} \Big|.
		\end{align*}
		Then, by the construction of $\Asetpione$, we have that $(\xi_{i,j})_{i \in \Asetpione}$ are independent and identically distributed sub-exponential random variables with parameter $\separamxi \leq \sgparamx (\sgparammu + \sgparamepsilon)$.  By Bernstein's inequality, for any $\tone > 0$, it follows that
		\begin{align*}
			\p\Big( \Big| \sum_{i\in \Asetpione} \xi_{i,j} \Big| > \tone \Big) \leq 2 \exp\Big[ -\cthree \min(|\Asetpione|^{-1} \tone^{2}\separamxi^{-2}, \tone \separamxi^{-1}) \Big]
		\end{align*}
		for some universal constant $\cthree > 0$.  Let 
		\begin{align*}
			\tone \defined \cone^{-1/2} n^{1/2} \separamxi \log^{1/2}(6p / \delta).
		\end{align*}
		Noting that $|\Asetpione| \leq n$, we have for $n$ sufficiently large, 
		\begin{align*}
			\p\Big( \Big| \sum_{i\in \Asetpione} \xi_{i,j} \Big| > \cthree^{-1/2} n^{1/2} \separamxi \log^{1/2}(6p / \delta) \Big) \leq \delta / (3p).
		\end{align*}
		Taking a union bound shows that 
		\begin{align*}
			\p\Big( \max_{j \in [p]} \Big| \sum_{i\in \Asetpione} \xi_{i,j} \Big| > \cthree^{-1/2} n^{1/2} \separamxi \log^{1/2}(6p / \delta) \Big) \leq \delta / 3.
		\end{align*}
		A similar calculation for $\Asetpitwo$ yields
		\begin{align*}
			\p\Big( \max_{j \in [p]} \Big| \sum_{i\in \Asetpitwo} \xi_{i} \Big| > \cthree^{-1/2} n^{1/2} \separamxi \log^{1/2}(6p / \delta) \Big) \leq \delta / 3.
		\end{align*}
		Now, 
		\begin{align*}
			\Big|\sum_{i \in \Asetpithree} \xi_{i,j} \Big| \leq |\Asetpithree| \max_{j \in [p]} \max_{i \in \Asetpithree} | \xi_{i,j} |.
		\end{align*}
		Again, by Bernstein's inequality, for $n$ sufficiently large, 
		\begin{align*}
			\p\Big\{\max_{j \in [p]} \max_{i \in \Asetpithree}|\xi_{i,j}| > \cthree^{-1} \separamxi \log(6p|\Asetpithree|/\delta) \Big\} \leq \delta / 3.
		\end{align*}
		Combining the above calculations, we have that 
		\begin{align*}
			\p(\Tset) \geq 1 - \delta
		\end{align*}
		for $n$ sufficiently large.  On $\Tset$, for any $\beta \in \R^{p}$, it follows that
		\begin{align*}
			\frac{1}{n} \sum_{i=1}^{n} (y_{\pi(i)} - \langle \xvec_{i}, \beta \rangleeuclid)^{2} + \lambda \Vert \beta \Vert_{1} 
			&= \frac{1}{n} \{ \Vert \Yvecpi \Vert_{2}^{2} - 2\langle \Yvecpi, \Xmat \beta \rangleeuclid + \Vert \Xmat\beta \Vert_{2}^{2} \} + \lambda \Vert \beta \Vert_{1} \\ 
			&\geq \frac{1}{n} \{ \Vert \Yvecpi \Vert_{2}^{2} - 6 \cthree^{-1/2} n^{1/2} \separamxi \log^{1/2}(6p / \delta) \Vert \beta \Vert_{1} + \Vert \Xmat\beta \Vert_{2}^{2} \} + \lambda \Vert \beta \Vert_{1} \\ 
			&\geq \frac{1}{n} \{ \Vert \Yvecpi \Vert_{2}^{2} + \Vert \Xmat\beta \Vert_{2}^{2} \} + (\lambda - 2\lambdazero) \Vert \beta \Vert_{1}.
		\end{align*}
		Thus, the above is minimized when $\beta = \zerovec_{p}$.  Therefore, 
		\begin{align*}
			\p(\betahatlassopi = \zerovec_{p}) \geq 1 - \delta.
		\end{align*}
		Invoking Corollary \ref{corollarypermutationmeanremainder} finishes the proof.
	\end{proof}

	To facilitate the proof of Theorem \ref{theoremtestinglmlzero}, we define three auxiliary estimators.  Let 
	\begin{align*}
		\betahatlzeropiasetj \defined \argmin_{\beta \in \R^{p}, \Vert \beta \Vert = s} \sum_{i=\Asetpij} (y_{\pi(i)} - \langle \xvec_{i}, \beta \rangleeuclid)^{2}
	\end{align*}
	for $k = 1, 2, 3$.  Thus, $\betahatlzeropiasetj$ is the $L_{0}$ estimator of $\beta$ using only the data $(\xvec_{i}, y_{\pi(i)})_{i \in \Asetpij}$ for $k = 1, 2, 3$.  The following lemma relates the squared predicted values of $\betahatlzeropi$ with $(\betahatlzeropiasetj)_{k=1}^{3}$.  The result allows us to decouple the dependence between the covariates and the response by analyzing the observations in $\Asetpione$ and $\Asetpitwo$ separately.

	\begin{lemma}\label{lemmaprojectioninequality}
		Consider the model given in equation \eqref{modellm}.  Then, 
		\begin{align*}
			\sum_{i=1}^{n} \langle \xvec_{i}, \betahatlzero \rangleeuclid^{2} 
			\leq \sum_{i \in \Asetpione} \langle \xvec_{i}, \betahatlzeropiasetone \rangleeuclid^{2} 
			+ \sum_{i \in \Asetpitwo} \langle \xvec_{i}, \betahatlzeropiasettwo \rangleeuclid^{2}
			+ \sum_{i \in \Asetpithree} \langle \xvec_{i}, \betahatlzeropiasetthree \rangleeuclid^{2}.
		\end{align*}
	\end{lemma}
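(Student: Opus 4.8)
The plan is to exploit the least-squares (projection) structure of best subset selection, in the same spirit as the proof of Lemma~\ref{lem1}. Throughout, $\pi$ is the fixed permutation in the statement; write $\Yvecpi = (y_{\pi(1)}, \dots, y_{\pi(n)})^\T$, let $\hat{\bff} \in \R^n$ be the vector of fitted values $(\langle \xvec_i, \betahatlzeropi \rangleeuclid)_{i=1}^n$, and let $\hat{S} \subseteq [p]$ with $|\hat{S}| \le \shat$ be the support selected by $\betahatlzeropi$, so that $\hat{\bff} = \Xmat_{\hat{S}} \hat\gamma$ for a least-squares coefficient $\hat\gamma$ and $\hat{\bff}$ is the orthogonal projection of $\Yvecpi$ onto $\col(\Xmat_{\hat{S}})$. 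Hence $\Yvecpi - \hat{\bff}$ is orthogonal to $\hat{\bff}$, and $\ltwonorm{\hat{\bff}}^2 = \langle \Yvecpi, \hat{\bff} \rangleeuclid$.

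First I would split this inner product along the partition $[n] = \Asetpione \cup \Asetpitwo \cup \Asetpithree$. For $v \in \R^n$ and $T \subseteq [n]$, let $v_T \in \R^{|T|}$ denote the restriction of $v$ to coordinates in $T$, and abbreviate $\Aset^{(\pi)}_1, \Aset^{(\pi)}_2, \Aset^{(\pi)}_3$ for $\Asetpione, \Asetpitwo, \Asetpithree$; then $\ltwonorm{\hat{\bff}}^2 = \sum_{k=1}^3 \langle (\Yvecpi)_{\Aset^{(\pi)}_k}, \hat{\bff}_{\Aset^{(\pi)}_k} \rangleeuclid$. The crucial observation is that $\hat{\bff}_{\Aset^{(\pi)}_k} = \Xmat_{\Aset^{(\pi)}_k, \hat{S}} \hat\gamma$ lies in the column space of the design restricted to rows $\Aset^{(\pi)}_k$ and columns $\hat{S}$. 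Writing $P_{T,S}$ for the orthogonal projection of $\R^{|T|}$ onto $\col(\Xmat_{T,S})$ and $P_k \defined P_{\Aset^{(\pi)}_k, \hat{S}}$, the identity $\langle (\Yvecpi)_{\Aset^{(\pi)}_k}, \hat{\bff}_{\Aset^{(\pi)}_k} \rangleeuclid = \langle P_k (\Yvecpi)_{\Aset^{(\pi)}_k}, \hat{\bff}_{\Aset^{(\pi)}_k} \rangleeuclid$, followed by Cauchy--Schwarz in each block and then Cauchy--Schwarz over $k \in \{1,2,3\}$, yields
\[
  \ltwonorm{\hat{\bff}}^2 \;\le\; \sum_{k=1}^3 \ltwonorm{P_k (\Yvecpi)_{\Aset^{(\pi)}_k}}\,\ltwonorm{\hat{\bff}_{\Aset^{(\pi)}_k}} \;\le\; \Bigl( \sum_{k=1}^3 \ltwonorm{P_k (\Yvecpi)_{\Aset^{(\pi)}_k}}^2 \Bigr)^{1/2} \Bigl( \sum_{k=1}^3 \ltwonorm{\hat{\bff}_{\Aset^{(\pi)}_k}}^2 \Bigr)^{1/2}.
\]
Since $\Aset^{(\pi)}_1, \Aset^{(\pi)}_2, \Aset^{(\pi)}_3$ partition $[n]$, the last factor is $\ltwonorm{\hat{\bff}}$, so (the case $\hat{\bff} = \bzero$ being trivial) cancelling one factor of $\ltwonorm{\hat{\bff}}$ and squaring gives $\ltwonorm{\hat{\bff}}^2 \le \sum_{k=1}^3 \ltwonorm{P_k (\Yvecpi)_{\Aset^{(\pi)}_k}}^2$. (Equivalently, $\diag(P_1, P_2, P_3)$ is itself an orthogonal projection of $\R^n$ that fixes $\hat{\bff}$, so one can just re-run the first step with it in place of the projection onto $\col(\Xmat_{\hat{S}})$.)

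It remains to identify each right-hand summand with an auxiliary estimator. For fixed $k$, minimizing $\sum_{i \in \Aset^{(\pi)}_k} (y_{\pi(i)} - \langle \xvec_i, \beta \rangleeuclid)^2$ over $\beta$ supported on a given $S$ amounts to projecting $(\Yvecpi)_{\Aset^{(\pi)}_k}$ onto $\col(\Xmat_{\Aset^{(\pi)}_k, S})$, so minimizing further over $|S| \le \shat$ picks the $S$ maximizing $\ltwonorm{P_{\Aset^{(\pi)}_k, S} (\Yvecpi)_{\Aset^{(\pi)}_k}}^2$; consequently $\sum_{i \in \Aset^{(\pi)}_k} \langle \xvec_i, \betahatlzeropiasetj \rangleeuclid^2 = \max_{|S| \le \shat} \ltwonorm{P_{\Aset^{(\pi)}_k, S} (\Yvecpi)_{\Aset^{(\pi)}_k}}^2 \ge \ltwonorm{P_k (\Yvecpi)_{\Aset^{(\pi)}_k}}^2$, the last step because $\hat{S}$ is an admissible support of size $\le \shat$. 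Substituting these three bounds into the display and recalling $\ltwonorm{\hat{\bff}}^2 = \sum_{i=1}^n \langle \xvec_i, \betahatlzeropi \rangleeuclid^2$ gives the claim. (Nothing here uses the particular structure of the sets $\Aset^{(\pi)}_k$; the inequality holds for any finite partition of $[n]$.)

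I expect the only genuine subtlety — and the step I would be most careful with — to be the cancellation in the second paragraph: one \emph{cannot} bound $\ltwonorm{\hat{\bff}_{\Aset^{(\pi)}_k}}^2$ block-by-block by the best $\shat$-sparse fit on $\Aset^{(\pi)}_k$, because $\hat{\bff}_{\Aset^{(\pi)}_k}$ is the restriction of a fit whose coefficient $\hat\gamma$ is computed from all $n$ observations and may have strictly larger norm than the genuine projection $P_k (\Yvecpi)_{\Aset^{(\pi)}_k}$; it is the \emph{global} Cauchy--Schwarz (equivalently, the block-diagonal-projection viewpoint) that pushes the inequality through. The remaining ingredients — the orthogonality identity $\ltwonorm{\hat{\bff}}^2 = \langle \Yvecpi, \hat{\bff} \rangleeuclid$, the membership $\hat{\bff}_{\Aset^{(\pi)}_k} \in \col(\Xmat_{\Aset^{(\pi)}_k, \hat{S}})$, and the variational characterization of best subset selection — are routine.
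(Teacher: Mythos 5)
Your proof is correct, but it takes a different route from the one in the paper. The paper's argument never touches the fitted values directly: it writes the full residual sum of squares as the sum of the three block residual sums of squares, applies the elementary inequality $\min_{\beta}\sum_{k}(\cdot)\geq\sum_{k}\min_{\beta}(\cdot)$ to conclude that the global estimator's RSS dominates the sum of the block-optimal RSS's, and then converts residual norms into fitted-value norms via a single application of the Pythagorean theorem ($\Vert \Yvecpi \Vert_2^2 = \mathrm{RSS} + \Vert \hat{\bff}\Vert_2^2$ globally and blockwise). You instead work directly with $\Vert\hat{\bff}\Vert_2^2=\langle \Yvecpi,\hat{\bff}\rangle_2$, split the inner product over the partition, insert the block projections $P_k$ by self-adjointness, and close the argument with a double Cauchy--Schwarz (equivalently, your block-diagonal projection $\diag(P_1,P_2,P_3)$) followed by the variational characterization of best subset selection on each block. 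Both arguments are sound and both ultimately rest on the least-squares fits being orthogonal projections; the paper's version is shorter and avoids the cancellation step you rightly flag as the delicate point, while yours has the merit of making explicit \emph{why} the naive blockwise bound $\Vert\hat{\bff}_{\Asetpij}\Vert_2^2\leq\Vert P_k(\Yvecpi)_{\Asetpij}\Vert_2^2$ fails and must be replaced by a global inequality, and of showing that nothing about the specific sets $\Asetpione,\Asetpitwo,\Asetpithree$ is used beyond their being a partition of $[n]$ --- a fact the paper's proof also uses but does not advertise.
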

	
	\begin{proof}[Proof of Lemma \ref{lemmaprojectioninequality}]
		Indeed, for any $\beta \in \R^{p}$, we have that
		\begin{align*}
			\sum_{i=1}^{n} (y_{i} - \langle \xvec_{i}, \beta \rangleeuclid)^{2} 
			= \sum_{k=1}^{3} \sum_{i \in \Asetpij} (y_{i} - \langle \xvec_{i}, \beta \rangleeuclid)^{2}.
		\end{align*}
		Minimizing both sides with respect to $\beta$, it follows that 
		\begin{align*}
			\sum_{i=1}^{n} (y_{i} - \langle \xvec_{i}, \betahatlzeropi \rangleeuclid)^{2} 
			&= \min_{\beta \in \R^{p} : \Vert \beta \Vert_{0} = s} \sum_{k = 1}^{3} \sum_{i\in\Asetpij} (y_{i} - \langle \xvec_{i}, \beta \rangleeuclid)^{2} \\
			&\geq \sum_{k = 1}^{3} \min_{\beta \in \R^{p} : \Vert \beta \Vert_{0} = s} \sum_{i\in\Asetpij} (y_{i} - \langle \xvec_{i}, \beta \rangleeuclid)^{2} \\
			&= \sum_{k = 1}^{3} \sum_{i\in\Asetpij} (y_{i} - \langle \xvec_{i}, \betahatlzeropiasetj \rangleeuclid)^{2}.
		\end{align*}
		Applying the Pythagorean Theorem finishes the proof.
	\end{proof}

	\begin{proof}[Proof of Theorem \ref{theoremtestinglmlzero}]
		It is clear that $\fhatlzero(\xvec_{i} ; (\xvec_{j}, y_{j})_{j=1}^{n} ; \pi) = \fhatlzero(\xvec_{i} ; (\xvec_{j}, y_{\pi(j)})_{j=1}^{n} ; \pizero)$ for any $\pi \in \Pi$.  Moreover, from Theorem 2.6 of \cite{rigollet2017}, there exists a constant $\cone > 0$ such that
		\begin{align*}
			\p\Big\{\sum_{i = 1}^{n} \langle \xvec_{i}, \betahatlzero - \betastar \rangleeuclid^{2} \leq \cone\sgparamepsilon(\log{\binom{p}{2\shat}} + \log(1 / \delta)) \Big\} \geq 1 - \delta.
		\end{align*}
		Now, fix $\pi \in \Pitilde$.  Since $(\xvec_{i})_{i \in \Asetpione}$ and $(y_{\pi(i)})_{i\in\Asetpione}$ are mutually independent, Theorem 2.6 of \cite{rigollet2017} implies there exists a constant $\ctwo > 0$ such that
		\begin{align*}
			\p\Big\{\sum_{i \in \Asetpione} \langle \xvec_{i}, \betahatlzeropiasetone \rangleeuclid^{2} \leq \ctwo(\sgparammu + \sgparamepsilon)(\log{\binom{p}{2\shat}} + \log(3 / \delta)) \Big\} \geq 1 - \delta/3.
		\end{align*}
		Analogously, we see that 
		\begin{align*}
			\p\Big\{\sum_{i \in \Asetpitwo} \langle \xvec_{i}, \betahatlzeropiasettwo \rangleeuclid^{2} \leq \ctwo(\sgparammu + \sgparamepsilon)(\log{\binom{p}{2\shat}} + \log(3 / \delta)) \Big\} \geq 1 - \delta/3.
		\end{align*}
		Next, by an argument identical to that of Lemma \ref{lemmapermutationmeanremainder}, it follows that
		\begin{align*}
			\sum_{i \in \Asetpione} \langle \xvec_{i}, \betahatlzeropiasetthree \rangleeuclid^{2} \leq \sum_{i \in \Asetpione} y_{\pi(i)}^{2} \leq \log^{2}(n) (\sigmamusq + \sigmaepsilonsq) + \cthree \log(n)
		\end{align*}
		with probability at least $1 - \delta / 3$ for some constant $\cthree > 0$.  Hence, Lemma \ref{lemmaprojectioninequality} implies that
		\begin{align*}
			\sum_{i=1}^{n} (\langle \xvec_{i}, \betahatlzero \rangleeuclid)^{2} \leq 2\ctwo(\sgparammu + \sgparamepsilon)(\log{\binom{p}{2\shat}} + \log(3 / \delta)) + \log^{2}(n) (\sigmamusq + \sigmaepsilonsq) + \cthree \log(n)
		\end{align*}
		with probability at least $1 - \delta$.  The result now follows from Corollary \ref{corollarypermutationmeanremainder}.
	\end{proof}

	\begin{proof}[Proof of Theorem \ref{theoremtestingtrmlzero}]
		The proof is identical to that of Theorem \ref{theoremtestinglmlzero}, replacing Theorem 2.6 of \cite{rigollet2017} with Theorem \ref{theorempredictionrisk} of the present paper.
	\end{proof}
	
	\begin{proof}[Proof of Theorem \ref{theoremtestingtrmlzeromisspecified}]
		Let $\Thetamattilde = \Utilde \Vtilde^\T$ with $\Utilde \in \R^{\done \times \rtilde}$ and $\Vtilde \in \R^{\dtwo \times \rtilde}$, $\mutilde_{i} \defined \langle \Xmat_{i}, \Thetamattilde \ranglehs$ with variance $\sigmamutilde \defined \var(\mutilde_{1}) = \vec(\Thetamattilde)^\T \mathbf{\Sigma} \vec(\Thetamattilde)$, and $\eta_{i} \defined \langle \Xmat_{i}, \Thetastar - \Thetamattilde \ranglehs$, yielding the decomposition
		\begin{align*}
		    y_{i} = \langle \Xmat_{i}, \Thetamattilde \ranglehs + \eta_{i} + \epsilon_{i}
		    = \langle \Xmat_{i} \Vtilde, \Utilde \ranglehs + \eta_{i} + \epsilon_{i}.
		\end{align*}
		Since $\Thetamattilde$ satisfies
		\begin{align*}
		    \Thetamattilde = \argmin_{\Thetamat \in \R^{\done \times \dtwo}, \rank(\Thetamat) \leq \rtilde} \e \langle \Xmat, \Thetastar - \Thetamat \ranglehs^{2},
		\end{align*}
		it follows from the population first-order condition that
		\begin{align*}
		    \e \vec(\Xmat \Vtilde) \langle \Xmat, \Thetastar - \Thetamattilde \ranglehs = \e \vec(\Xmat \Vtilde) \eta = \mathbf{0}_{\rtilde \done},
		\end{align*}
		implying that $\vec(\Xmat_{i} \Vtilde)$ is uncorrelated with $\eta_{i}$.  
		Now, consider an auxiliary oracle estimator $\hat\bTheta_{\tilde \bV}$ given by
		\begin{align*}
			\UhatVtilde \defined \argmin_{\Umat \in \R^{\done \times \rtilde}} \sum_{i=1}^{n} (y_{i} - \langle \Xmat_{i}, \Umat \Vtilde^\T \ranglehs)^{2}~~\text{and}~~\ThetahatVtilde \defined \UhatVtilde \Vtilde^\T.
		\end{align*}
		Since $\Thetahatlzero$ is the empirical risk minimizer, it follows from the Pythagorean Theorem that 
		\begin{align*}
		    \Lambdapizero 
		    = \sum_{i=1}^{n} \langle \Xmat_{i}, \Thetahatlzero \ranglehs^{2} 
		    \geq \sum_{i=1}^{n} \langle \Xmat_{i}, \ThetahatVtilde \ranglehs^{2}
		    = \Vert \projVtilde \Yvec \Vert_{2}^{2}
		    = \Vert \mutildevec \Vert_{2}^{2} + 2\langle \mutildevec, \etavec + \epsilonvec \rangleeuclid + \Vert \projVtilde (\etavec + \epsilonvec) \Vert_{2}^{2}. 
		\end{align*}
		Fix $\delta > 0$ arbitrarily.  Now, proceeding as in the proof of Theorem \ref{theoremtestingalternative} and Corollary \ref{corollarysgmean}, there exists $\tone > 0$, depending on $\delta$, $\fnorm{\Thetamattilde}$ and $K_x$, such that with probability at least $1 - \delta / 2$, 
		\begin{align*}
		    \Vert \mutildevec \Vert_{2}^{2} \geq n\sigmamutilde - \tone n^{1/2}.
		\end{align*}
		If in addition Assumption \ref{assumptiontestingtrmkoltchinskii} is satisfied, then 
		\begin{align*}
		    \Vert \mutildevec \Vert_{2}^{2} \geq n\sigmamutilde - \tone' n^{1/2} \sigmamutilde
		\end{align*}
		with probability at least $1 - \delta / 2$, where $t_1'$ depends on $\delta$ and $\kol$.  For the second term, since $\vec(\Xmat_{i} \Vtilde)$ is uncorrelated with $\eta_{i}$, it follows that $\e\langle \mutildevec, \etavec + \epsilonvec \rangleeuclid = 0$.
		Now, by Chebyshev's inequality, there exists $\ttwo > 0$ depending only on $\delta$ such that
		\begin{align*}
		    2 |\langle \mutildevec, \etavec + \epsilonvec \rangleeuclid| \leq \ttwo n^{1/2} \sigma_{\mutilde} (\sigmamu + \sigmaepsilon)
		\end{align*}
		with probability at least $1 - \delta / 2$. Therefore, with probability at least $1 - \delta$,
		\[
		    \Lambdapizero \ge n\sigmamutilde - t_1n^ {1 / 2} - \ttwo n^{1/2} \sigma_{\mutilde} (\sigmamu + \sigmaepsilon). 
		\]
		
		It remains to bound $\Lambdapi$ for $\pi \in \Pitilde$.  Define the auxiliary estimators
		\begin{align*}
		    \Thetahatlzero^{(\Asetpij)} \defined \argmin_{\Thetamat \in \R^{\done \times \dtwo}, \rank(\Thetamat) \leq \rtilde}  \sum_{i \in \Asetpij} (y_{\pi(i)} - \langle \Xmat_{i}, \Thetamat \ranglehs)^{2}.
		\end{align*}
		By an identical argument as in Lemma \ref{lemmaprojectioninequality}, it follows that
		\begin{align*}
		    \Lambdapi = \sum_{i=1}^{n} \langle \Xmat_{i}, \Thetahatlzeropi \ranglehs^{2}
		    \leq \sum_{k=1}^{3} \sum_{i \in \Asetpij} \langle \Xmat_{i}, \Thetahatlzero^{(\Asetpij)} \ranglehs^{2}.
		\end{align*}
		By Theorem \ref{theorempredictionrisk}, there exists a constant $\tthree$, depending on $\delta$, $\sigmamusq$, $\sigmaepsilon$, $\sgparamx$, and $\sgparamepsilon$, such that 
		\begin{align*}
		    \sum_{i \in \Asetpij} \langle \Xmat_{i}, \Thetahatlzero^{(\Asetpij)} \ranglehs^{2} \leq \tthree \rtilde d \log(n)
		\end{align*}
		for $j = 1, 2$ with probability at least $1 - \delta / 3$.  Similarly, following Lemma \ref{lemmapermutationmeanremainder}, we have that 
		\begin{align*}
		    \sum_{i \in \Asetpij} \langle \Xmat_{i}, \Thetahatlzero^{(\Asetpij)} \ranglehs^{2} \leq \log^{2}(n) (\sigmamusq + \sigmaepsilonsq) + \tfour \log(n)
		\end{align*}
		with probability at least $1 - \delta / 3$ for a constant $\tfour$ depending on $\delta$, $\fnorm{\Thetastar}$, $\sgparamx$, and $\sgparamepsilon$.  Combining, we have 
		\begin{align*}
		    \Lambdapi \leq 2\tthree \rtilde d \log(n) + \log^{2}(n) (\sigmamusq + \sigmaepsilonsq) + \tfour \log(n)
		\end{align*}
		with probability at least $1 - \delta$.  Proceeding as in Theorem \ref{theoremtestingalternative} finishes the proof.
	\end{proof}
	
	\section{Acknowledgements}
	We thank Professor Zili Zhang at Tongji University for his valuable suggestions and comments as we were developing Theorem \ref{theoremcoveringnumber}.  We also thank Professors Xuming He, Long Nguyen, and Stilian Stoev at the University of Michigan, Ann Arbor for their constructive comments of our work.  
	ML is supported in part by NSF Grant DMS-1646108.  YR is supported in part by NSF Grants DMS-1712962 and DMS-2113364.  RZ is supported in part by NSF Grants DMS-1856541 and DMS-1926686 and the Ky Fan and Yu-Fen Fan Endowment Fund at the Institute for Advanced Study.  ZZ is supported in part by NSF Grant DMS-2015366.
	
	\bibliographystyle{newapa}
	\bibliography{RTref}
	
	\newcounter{suppsection}
	\setcounter{suppsection}{1}
	\def\thesection{S\arabic{suppsection}}
	\section{Supplementanry Material}
	In the supplementary material, we provide additional simulation results from Section \ref{sectionsimulations} for the matrix completion setting.

\setcounter{table}{0}
\renewcommand{\thetable}{S\arabic{table}}
\setcounter{figure}{0}
\renewcommand{\thefigure}{S\arabic{figure}}

\begin{table}[h]
\centering
\caption{Simulations for In-Sample Prediction Risk for Matrix Completion} 
\label{tableestmc}
\begin{tabular}{|l|l|rrrrrrrrrr|}
   \hline
 & {SNR} & 1.00 & 1.43 & 2.04 & 2.92 & 4.18 & 5.98 & 8.55 & 12.23 & 17.48 & 25.00 \\ 
   \hline
 & LS & 0.10 & 0.10 & 0.10 & 0.10 & 0.10 & 0.10 & 0.10 & 0.10 & 0.10 & 0.10 \\ 
  $r^\ast = 1$ & AM & 0.23 & 0.22 & 0.21 & 0.21 & 0.20 & 0.20 & 0.20 & 0.20 & 0.20 & 0.20 \\ 
   & NN & 0.38 & 0.44 & 0.50 & 0.56 & 0.62 & 0.67 & 0.73 & 0.77 & 0.82 & 0.86 \\ 
   \hline
 & LS & 0.20 & 0.20 & 0.20 & 0.20 & 0.20 & 0.20 & 0.20 & 0.20 & 0.20 & 0.20 \\ 
  $r^\ast = 2$ & AM & 0.44 & 0.50 & 0.52 & 0.49 & 0.47 & 0.46 & 0.44 & 0.43 & 0.43 & 0.42 \\ 
   & NN & 0.42 & 0.48 & 0.55 & 0.61 & 0.68 & 0.74 & 0.79 & 0.84 & 0.88 & 0.92 \\ 
   \hline
 & LS & 0.30 & 0.30 & 0.30 & 0.30 & 0.30 & 0.30 & 0.30 & 0.30 & 0.30 & 0.30 \\ 
  $r^\ast = 3$ & AM & 0.60 & 0.65 & 0.65 & 0.66 & 0.71 & 0.72 & 0.69 & 0.67 & 0.65 & 0.64 \\ 
   & NN & 0.44 & 0.51 & 0.58 & 0.65 & 0.71 & 0.77 & 0.83 & 0.87 & 0.91 & 0.94 \\ 
   \hline
 & LS & 0.40 & 0.40 & 0.40 & 0.40 & 0.40 & 0.40 & 0.40 & 0.40 & 0.40 & 0.40 \\ 
  $r^\ast = 4$ & AM & 0.69 & 0.74 & 0.78 & 0.83 & 0.83 & 0.84 & 0.86 & 0.89 & 0.87 & 0.86 \\ 
   & NN & 0.45 & 0.53 & 0.60 & 0.67 & 0.74 & 0.80 & 0.85 & 0.89 & 0.93 & 0.95 \\ 
   \hline
\end{tabular}
\end{table}

\begin{figure}[h]
    \centering
    \includegraphics[scale=0.145]{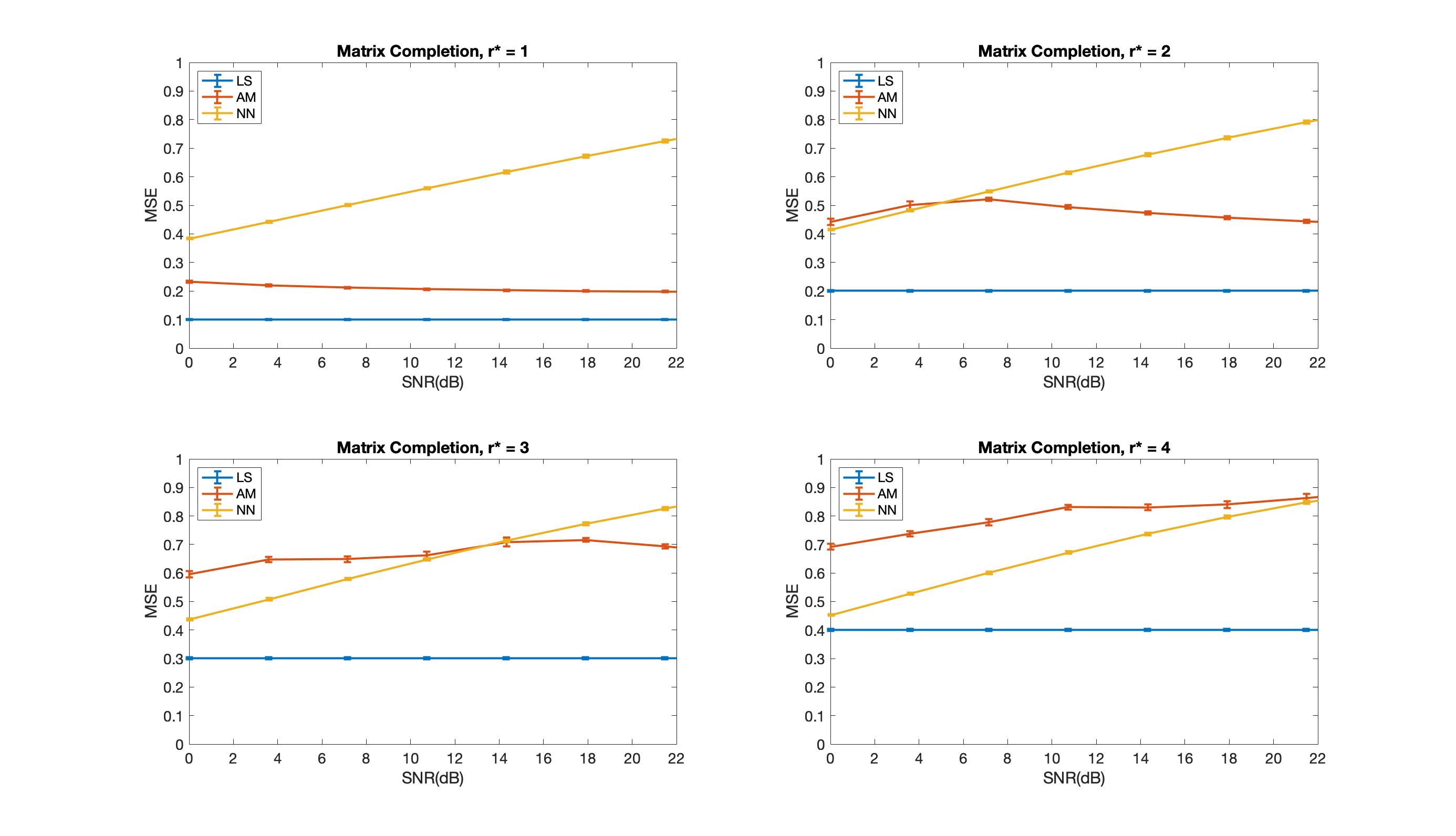}
    \caption{Plots of in-sample prediction error for matrix completion}
    \label{plot_est_mc}
\end{figure}

\begin{table}[h]
\centering
\caption{Simulations for Inference for Matrix Completion} 
\label{tableinftrmmc}
\begin{tabular}{|l|l|rrrrrrrrrr|}
   \hline
 & SNR & 0.000 & 0.125 & 0.250 & 0.375 & 0.500 & 0.625 & 0.750 & 0.875 & 1.000 & 2.000  \\ 
   \hline
 & FT & 0.08 & 0.86 & 1.00 & 1.00 & 1.00 & 1.00 & 1.00 & 1.00 & 1.00 & 1.00  \\ 
   & LS & 0.06 & 0.80 & 0.98 & 1.00 & 1.00 & 1.00 & 1.00 & 1.00 & 1.00 & 1.00  \\ 
   & NN-OR & 0.03 & 0.20 & 0.53 & 0.74 & 0.81 & 0.97 & 0.96 & 0.97 & 1.00 & 1.00  \\ 
   & NN-DS & 0.00 & 0.00 & 0.00 & 0.00 & 0.00 & 0.00 & 0.00 & 0.00 & 0.00 & 0.00  \\ 
   & NN-IS & 0.00 & 0.00 & 0.00 & 0.00 & 0.00 & 0.00 & 0.00 & 0.00 & 0.00 & 0.00  \\ 
  $r^\ast = 1$ & NN-OS & 0.00 & 0.00 & 0.00 & 0.00 & 0.00 & 0.00 & 0.00 & 0.00 & 0.00 & 0.00 \\ 
   & AM-1 & 0.07 & 0.17 & 0.51 & 0.75 & 0.91 & 0.99 & 0.99 & 0.99 & 1.00 & 1.00  \\ 
   & AM-2 & 0.09 & 0.12 & 0.39 & 0.62 & 0.77 & 0.90 & 0.93 & 0.97 & 1.00 & 1.00 \\ 
   & AM-3 & 0.07 & 0.13 & 0.21 & 0.38 & 0.60 & 0.70 & 0.79 & 0.87 & 0.90 & 1.00 \\ 
   & AM-4 & 0.06 & 0.05 & 0.15 & 0.21 & 0.30 & 0.41 & 0.55 & 0.69 & 0.66 & 0.95  \\ 
   \hline
 & FT & 0.08 & 0.65 & 0.98 & 1.00 & 1.00 & 1.00 & 1.00 & 1.00 & 1.00 & 1.00  \\ 
   & LS & 0.08 & 0.61 & 0.92 & 0.99 & 1.00 & 1.00 & 1.00 & 1.00 & 1.00 & 1.00 \\ 
   & NN-OR & 0.05 & 0.16 & 0.36 & 0.54 & 0.68 & 0.81 & 0.84 & 0.87 & 0.93 & 1.00 \\ 
   & NN-DS & 0.00 & 0.00 & 0.00 & 0.00 & 0.00 & 0.00 & 0.00 & 0.00 & 0.00 & 0.00  \\ 
   & NN-IS & 0.00 & 0.00 & 0.00 & 0.00 & 0.00 & 0.00 & 0.00 & 0.00 & 0.00 & 0.00 \\ 
  $r^\ast = 2$ & NN-OS & 0.00 & 0.00 & 0.00 & 0.00 & 0.00 & 0.00 & 0.00 & 0.00 & 0.00 & 0.00 \\ 
   & AM-1 & 0.09 & 0.16 & 0.29 & 0.56 & 0.73 & 0.79 & 0.84 & 0.92 & 0.93 & 0.99 \\ 
   & AM-2 & 0.05 & 0.14 & 0.19 & 0.36 & 0.64 & 0.77 & 0.86 & 0.91 & 0.92 & 1.00  \\ 
   & AM-3 & 0.08 & 0.05 & 0.18 & 0.31 & 0.46 & 0.62 & 0.73 & 0.77 & 0.88 & 0.98  \\ 
   & AM-4 & 0.08 & 0.09 & 0.15 & 0.11 & 0.20 & 0.25 & 0.43 & 0.48 & 0.52 & 0.94  \\ 
   \hline
 & FT & 0.08 & 0.52 & 0.93 & 1.00 & 1.00 & 1.00 & 1.00 & 1.00 & 1.00 & 1.00  \\ 
   & LS & 0.06 & 0.44 & 0.87 & 0.97 & 1.00 & 1.00 & 1.00 & 1.00 & 1.00 & 1.00  \\ 
   & NN-OR & 0.07 & 0.11 & 0.23 & 0.37 & 0.51 & 0.64 & 0.75 & 0.85 & 0.90 & 0.98  \\ 
   & NN-DS & 0.00 & 0.00 & 0.00 & 0.00 & 0.00 & 0.00 & 0.00 & 0.00 & 0.00 & 0.00  \\ 
   & NN-IS & 0.00 & 0.00 & 0.00 & 0.00 & 0.00 & 0.00 & 0.00 & 0.00 & 0.00 & 0.00  \\ 
  $r^\ast = 3$ & NN-OS & 0.00 & 0.00 & 0.00 & 0.00 & 0.00 & 0.00 & 0.00 & 0.00 & 0.00 & 0.00 \\ 
   & AM-1 & 0.06 & 0.14 & 0.22 & 0.29 & 0.52 & 0.64 & 0.70 & 0.73 & 0.80 & 0.95  \\ 
   & AM-2 & 0.04 & 0.08 & 0.18 & 0.29 & 0.47 & 0.55 & 0.73 & 0.74 & 0.81 & 1.00  \\ 
   & AM-3 & 0.08 & 0.08 & 0.09 & 0.22 & 0.28 & 0.40 & 0.49 & 0.65 & 0.72 & 0.98  \\ 
   & AM-4 & 0.09 & 0.06 & 0.09 & 0.12 & 0.14 & 0.24 & 0.25 & 0.36 & 0.42 & 0.86  \\ 
   \hline
 & FT & 0.08 & 0.36 & 0.79 & 0.98 & 1.00 & 1.00 & 1.00 & 1.00 & 1.00 & 1.00  \\ 
   & LS & 0.06 & 0.31 & 0.67 & 0.97 & 0.99 & 1.00 & 1.00 & 1.00 & 1.00 & 1.00  \\ 
   & NN-OR & 0.09 & 0.17 & 0.23 & 0.36 & 0.42 & 0.48 & 0.61 & 0.63 & 0.73 & 0.96  \\ 
   & NN-DS & 0.00 & 0.00 & 0.00 & 0.00 & 0.00 & 0.00 & 0.00 & 0.00 & 0.00 & 0.00  \\ 
   & NN-IS & 0.00 & 0.00 & 0.00 & 0.00 & 0.00 & 0.00 & 0.00 & 0.00 & 0.00 & 0.00  \\ 
  $r^\ast = 4$ & NN-OS & 0.00 & 0.00 & 0.00 & 0.00 & 0.00 & 0.00 & 0.00 & 0.00 & 0.00 & 0.00  \\ 
   & AM-1 & 0.05 & 0.10 & 0.18 & 0.25 & 0.35 & 0.48 & 0.54 & 0.60 & 0.65 & 0.82  \\ 
   & AM-2 & 0.08 & 0.09 & 0.18 & 0.24 & 0.31 & 0.44 & 0.48 & 0.58 & 0.69 & 0.92  \\ 
   & AM-3 & 0.07 & 0.07 & 0.09 & 0.19 & 0.22 & 0.29 & 0.39 & 0.48 & 0.60 & 0.86  \\ 
   & AM-4 & 0.05 & 0.04 & 0.09 & 0.07 & 0.13 & 0.15 & 0.30 & 0.31 & 0.40 & 0.75  \\ 
   \hline
\end{tabular}
\end{table}

\begin{figure}[h]
    \centering
    \includegraphics[scale=0.2]{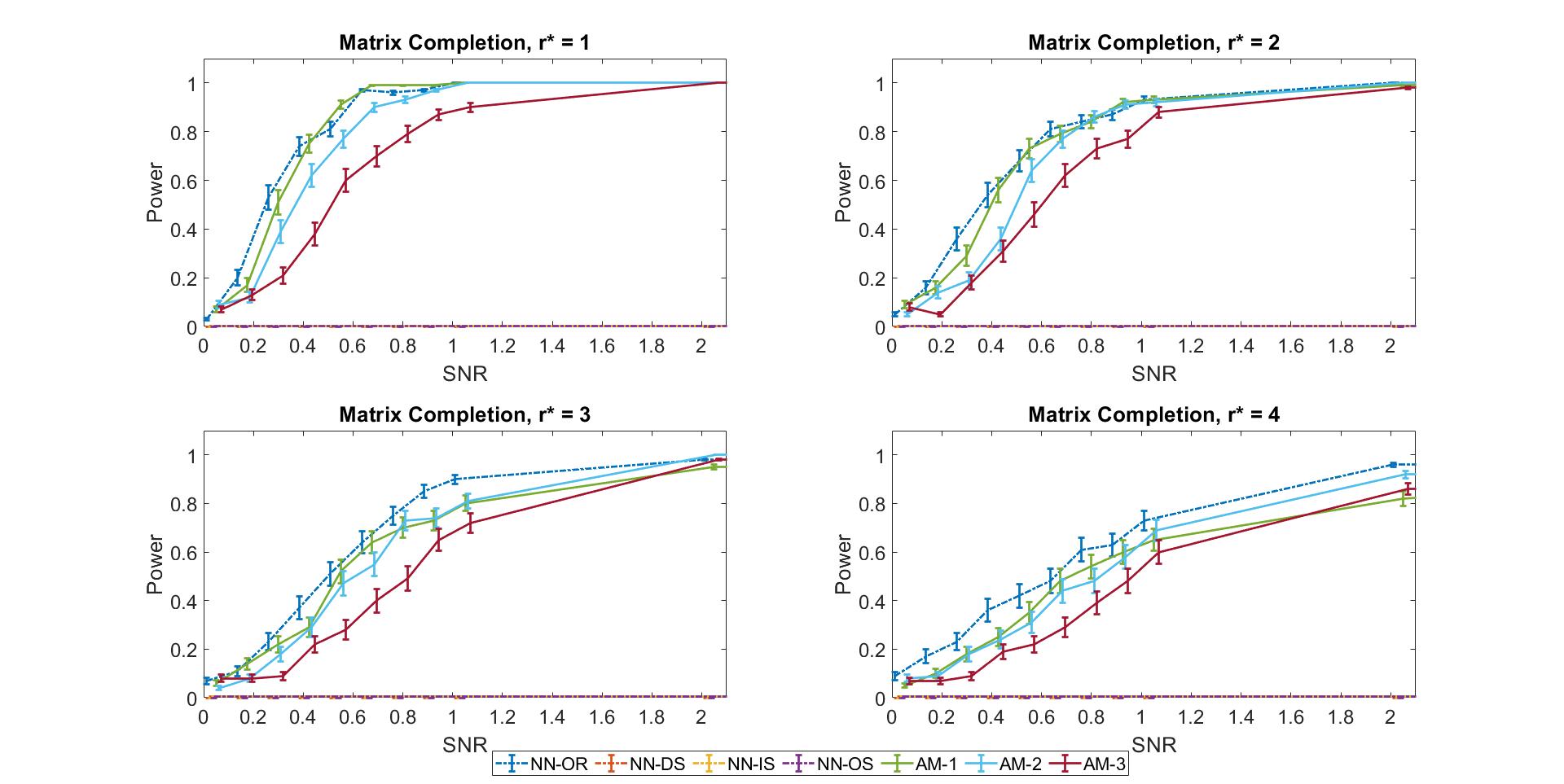}
    \caption{Plots of power for matrix completion}
    \label{plot_inf_mc}
\end{figure}

\end{document}